\newtheorem{theorem}[equation]{Theorem}
\newtheorem{lemma}[equation]{Lemma}
\newtheorem{corollary}[equation]{Corollary}
\newtheorem{proposition}[equation]{Proposition}
\theoremstyle{definition}
\newtheorem{definition}[equation]{Definition}
\newtheorem{example}[equation]{Example}
\newtheorem{algorithm}[equation]{Algorithm}
\numberwithin{equation}{section}
\def\bC{{\mathbb C}}
\def\bE{{\mathbb E}}
\def\bP{{\mathbb P}}
\def\bQ{{\mathbb Q}}
\def\bR{{\mathbb R}}
\def\bZ{{\mathbb Z}}
\def\bT{{\mathbb T}}
\def\cF{{\mathcal F}}
\def\cI{{\mathcal I}}
\def\cL{{\mathcal L}}
\def\cO{{\mathcal{O}}}
\def\cP{{\mathcal{P}}}
\def\cR{{\mathcal R}}
\def\F{\phi}
\def\pdiv{\operatorname{div}}
\def\H{\rm H}
\def\hM{\widehat{M}}
\def\hs{\widehat{\sigma}}
\def\iso{\cong}
\def\ra{\rightarrow}
\def\operatorname#1{\mathop{\rm #1}\nolimits}
\def\Proj{\operatorname{Proj}}
\def\Hom{\operatorname{Hom}}
\def\Pic{\operatorname{Pic}}
\def\Spec{\operatorname{Spec}}
\def\codim{\operatorname{codim}}
\def\deg{\operatorname{deg}}
\def\det{\operatorname{det}}
\def\mon{\operatorname{Mon}}
\def\Nef{{\operatorname{Nef}}}
\def\Mov{{\operatorname{Mov}}}
\def\Nu{{\operatorname{N_1}}}
\def\NU{{\operatorname{N^1}}}
\def\Eff{{\operatorname{Eff}}}
\def\Cl{\operatorname{Cl}}
\newcommand{\ignore}[1]{}
\definecolor{c5}{rgb}{0.1,0.45,0.03}
\definecolor{c2}{rgb}{0.6,0.3,0.1}
\definecolor{c3}{rgb}{0.6,0.1,0.3}
\definecolor{c4}{rgb}{0.5,0.35,0.15}
\definecolor{c1}{rgb}{0.5,0.15,0.35}
\newcommand{\pb}{\bullet}
\newcommand{\pbfour}{{\color{cyan}\bigstar}\normalcolor}
\newcommand{\pbtwo}{{\color{blue}\blacklozenge}\normalcolor}
\newcommand{\pquadric}{{\color{green}\blacksquare}\normalcolor}
\newcommand{\pbone}{{\color{c5}\blacktriangle}\normalcolor}
\newcommand{\pbthreecoll}{{\color{blue}\spadesuit}\normalcolor}
\begin{document}

\title[81 resolutions]{On 81 symplectic resolutions of a 4-dimensional
  quotient\\ by a group of order 32} \today \thanks{This research was
  conducted within the framework of the Polish National Science Center
  project 2013/08/A/ST1/00804 with support from grants
  2012/07/N/ST1/03202 (first author) and 2012/07/B/ST1/0334 (second
  author).}
\author[M.~Donten-Bury]{Maria Donten-Bury}
\address{Instytut Matematyki UW, Banacha 2, 02-097 Warszawa, Poland}
\email{M.Donten@mimuw.edu.pl, J.Wisniewski@uw.edu.pl}

\author[J.~A.~Wi\'sniewski]{Jaros\l{}aw A. Wi\'sniewski}

\keywords{quotient singularity, symplectic resolution, Cox ring, hyperkaehler manifold}

\subjclass[2010]{14E15, 14E30, 14L30, 14L24, 14C20, 53C26}

\begin{abstract}
  We provide a construction of 81 symplectic resolutions of a
  4-dimensional quotient singularity obtained by an action of a group
  of order 32. The existence of such resolutions is known by a result
  of Bellamy and Schedler. Our explicit construction is obtained via
  GIT quotient of the spectrum of a ring graded in the Picard group
  generated by the divisors associated to the conjugacy classes of
  symplectic reflections of the group in question. As the result we
  infer the geometric structure of these resolutions and their flops.
  Moreover, we represent the group in question as a group of
  automorphisms of an abelian 4-fold so that the resulting quotient
  has singularities with symplectic resolutions. This yields a new
  Kummer-type symplectic 4-fold.
\end{abstract}

\maketitle

\section{Introduction}

\subsection{Background}
Long before the notion of the Mori Dream Space was brought to life by
Hu and Keel in \cite{HuKeel} and put in the context of the Mumford's
Geometric Invariant Theory (GIT), \cite{MumfordGIT}, and related to
homogeneous coordinate rings, which were introduced for toric
varieties by David Cox in \cite{Cox}, a similar concept emerged in the
local study of contractions in the Minimal Model Program (MMP). In
particular, in his 1992 inspiring talk \cite{ReidFlip} Miles Reid
explained how to view flips in terms of variation of GIT. Although the
notion of the total coordinate ring, known also as the Cox ring, has
been extensively studied for projective varieties in the last decade,
see for example \cite{CastravetTevelev}, \cite{StillmanTestaVelasco},
\cite{ArtebaniHausenLaface}, apparently it has not been used for
understanding local contractions nor for resolution of higher
dimensional singularities, as it was proposed in \cite{ReidFlip}.

In the present paper we follow the ideas of \cite{ReidFlip} and
construct a family of crepant resulutions of a symplectic singularity
via GIT quotients. Two preceding papers which used similar ideas,
\cite{FG-AL}, \cite{DontenSurfaces}, concerned the case of surface
singularities for which the resolutions are classically known. The
excellent book \cite{CoxRings} which provides the exhaustive overview
of the present state of knowledge about Cox rings and their
applications tackles this problem for toric and complexity-1 case.

In the present paper we focus on a 4-dimensional symplectic
singularity which is known to admit such a resolution by Bellamy and
Schedler, \cite{BellamySchedler}. The result of Bellamy and Schedler
is based on relation of symplectic resolutions to smoothings of the
singularity by a Poisson deformation, see \cite{NamikawaPoisson},
\cite{GinzburgKaledin}.  The methods which we provide in the present
paper are of completely different nature. They reveal an explicit
description of the resolutions, which is a significant advantage over
the previous approach which was non-effective in this respect.

\subsection{Resolutions via GIT quotients}

The main result of the present paper is the following.
\begin{theorem}\label{main_theorem}
  Let $V$ be a 4-dimensional vector space with a symplectic form
  $\omega$. Assume that $G$ is a group of order 32 defined in Section
  \ref{the-group}, acting on $V$ and preserving $\omega$. By $\bT$ we
  denote a 5-dimensional algebraic torus with coordinates $t_i$,
  $i=0,\dots 4$ associated to 5 classes of symplectic reflections
  generating $G$.\par
  Let $\cR$ be a $\bC$-subalgebra generated in $\bC[V]\otimes\bC[\bT]$
  by $t_i^{-2}$ for $i=0,\dots, 4$ and $\F_{ij}t_it_j$ for $0\leq
  i<j\leq 4$, where $\F_{ij}$ are eigen-functions of the action of
  $Ab(G)$ on $\bC[V]^{[G,G]}$, as defined in Lemma
  \ref{eigenvectors} \par
  Then there are 81 GIT quotients of $\Spec\cR$ which yield all crepant
  resolutions of $V/G$. A distinguished resolution of $V/G$ has a
  2-dimensional fiber which is a union of a $\bP^2$ blown-up in 4
  points and of ten copies of $\bP^2$.
\end{theorem}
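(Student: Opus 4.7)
The plan is to interpret $\Spec\cR$ as a Mori-theoretic master space over $V/G$, from which every crepant resolution arises as a GIT quotient by the 5-dimensional torus $\bT$. The grading is chosen so that each coordinate $t_i$ corresponds to one of the prime exceptional divisors of any crepant resolution---one per conjugacy class of symplectic reflections---so $\cR$ plays the role of a Cox-type total coordinate ring with $\bT$ acting as the Picard torus. This matches the Hu--Keel framework for the Cox ring of a Mori dream space of Picard rank 5.

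I would first verify compatibility with the quotient $V/G$: a monomial in the generators $t_i^{-2}$ and $\F_{ij}t_it_j$ is $\bT$-invariant exactly when the $t_i$-exponents vanish, and the resulting polynomial expressions in the $\F_{ij}$ recover the generators of $\bC[V]^G$ through the eigen-decomposition of $\bC[V]^{[G,G]}$ under $Ab(G)$ supplied by Lemma~\ref{eigenvectors}. Consequently the trivial-character GIT quotient is $V/G=\Spec\bC[V]^G$, and the dimension count $\dim\Spec\cR=4+5=9$ ensures every $\bT$-quotient is $4$-dimensional. Next I would carry out the GIT chamber analysis on the character lattice $\bZ^5$ of $\bT$: in the spirit of Reid and Hu--Keel, the walls correspond to codimension-1 flopping contractions, and maximal chambers should correspond bijectively to projective crepant models of $V/G$. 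The arrangement is determined explicitly by the weights of the generators, and the goal is to prove it has exactly 81 maximal chambers, each giving a smooth model and matching the Bellamy--Schedler enumeration; smoothness is checked by verifying that $\bT$ acts freely on the stable locus over each chamber, while crepancy is inherited from $G\subset\sp(V)$ preserving $\omega$.

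For the distinguished resolution I would compute the fiber over $0\in V/G$ as the $\bT$-quotient of the corresponding fiber in $\Spec\cR$; its irreducible components should correspond to toric-type strata indexed by distinguished subsets of reflection classes, with the $\bP^2$ blown up in $4$ points coming from a preferred class and the ten additional copies of $\bP^2$ indexed by the $\binom{5}{2}=10$ pairs of classes. The principal obstacles are twofold: first, verifying that $\cR$ is finitely generated exactly by the listed elements and extracting its defining relations (a question of explicit invariant theory for the specific group $G$ of order~32), and second---the combinatorial heart of the proof---proving that the GIT fan has precisely 81 smooth maximal chambers, which requires a precise description of the hyperplane arrangement together with a case-by-case smoothness and crepancy check on each chamber.
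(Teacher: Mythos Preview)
Your high-level framing is the same as the paper's: $\cR$ is a Cox-type ring graded by the Picard lattice of a resolution, the trivial-character invariants recover $\bC[V]^G$ (this is exactly Proposition~\ref{map-of-invariants}), and resolutions arise as GIT quotients of $\Spec\cR$ by $\bT_\Lambda$. However, your execution plan diverges from the paper's in two essential ways, and one of your steps contains a real gap.

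First, the gap. You write that ``smoothness is checked by verifying that $\bT$ acts freely on the stable locus over each chamber.'' Free action is necessary but not sufficient: the stable locus $(\Spec\cR)^s$ is a closed subvariety of $\bC^{15}$ cut out by the ideal of Proposition~\ref{ideal-gens}, and there is no a priori reason for it to be smooth. The paper devotes Proposition~\ref{quotient-smoothness} to exactly this issue, and it is a substantial Jacobian computation organized around the stratification of $(\Spec\cR)^s$ by orbits of the ambient big torus (Table~\ref{table_orbits}). Without this, you have a geometric quotient but not a resolution.

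Second, the paper does \emph{not} verify smoothness chamber by chamber. It does the hard computation once, for the central character $\kappa=(2,2,2,2,2)$, obtaining a single resolution $X^\kappa$ (Theorem~\ref{thmGITquotient}). All other resolutions are then reached by Mukai flops using \cite{WierzbaWisniewski}, so their smoothness is automatic and no further GIT analysis is needed. The count of $81$ is not obtained by enumerating GIT chambers of the fan directly but by identifying $\Mov(X^\kappa)$ with the positive orthant $cone(e_0,\dots,e_4)$ and cutting it by the hyperplanes $f_{ij}^\perp$ and $\beta_i^\perp$; the resulting chamber structure is read off from the Zariski chamber decomposition of $\bP^2_4$ (Table~\ref{ZariskiP^2_4}) together with the outer flops of Section~\ref{geom}. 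Your proposed ``case-by-case smoothness and crepancy check on each chamber'' would be far more laborious and is not what the paper does.

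Finally, your description of the central fiber is too vague to constitute an argument. The key input is Corollary~\ref{emmbeddP^2_4}: setting $u_0=\cdots=u_4=0$ in $\cI$ yields the Pl\"ucker ideal of $Gr(2,5)$, which is precisely $\cR(\bP^2_4)$, so the component $F_0=\bigcap_i E_i$ is identified with $\bP^2_4$ and $\iota^*:\Pic(X^\kappa)\to\Pic(\bP^2_4)$ is an isomorphism. The remaining ten components are shown to be $\bP^2$'s not by a stratification argument but by deforming the ten $(-1)$-curves $C_{ij}\subset F_0$ and invoking \cite[Thm.~1.1]{WierzbaWisniewski}; see Proposition~\ref{special-resolution}.
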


The same number of different symplectic resolutions of $V/G$ has been
calculated by Bellamy, \cite{Bellamy81}, by using the results of
Namikawa, \cite{Namikawa2}. Recently, using methods from
\cite{HausenMoscow} and \cite{HausenKeicherLaface}, Hausen and Keicher
have proved that the ring $\cR$ in our theorem is, in fact, the total
coordinate ring of any resolution of $V/G$, \cite{HausenPrivate}.

Our construction of the ring $\cR$ is built up on the base of the
total coordinate ring of the quotient $V/G$ which is equal to
$\bC[V]^{[G,G]}\subset \bC[V]$, \cite{ArzhantsevGauifullin}.  The
generators of $\cR$ are the classes of exceptional divisors of the
resolution and strict transforms of Weil divisors associated to
homogeneous generators of $\bC[V]^{[G,G]}$. In our theorem these are
the generators of the type $t_i^{-2}$ and $\F_{ij}t_it_j$,
respectively. We note that by the McKay correspondence,
\cite{KaledinMcKay}, the exceptional divisors are in correspondence
with classes of symplectic reflections in $G$.

\subsection{Contents of the paper}
In Section \ref{prelim} we introduce definitions and recall results
which are needed in subsequent sections.  Next, in Section \ref{total}
we introduce the layout for constructing the total coordinate ring of
resolutions of quotient singularities.

The idea is as follows. It is known that the Cox ring of a quotient
singularity $V/G$ is the ring of invariants of the commutator
$\bC[V]^{[G,G]}$ which decomposes into eigenspaces of the action of
the abelianization $Ab(G)$ which can be identified with the class
group $\Cl(V/G)$. That is $\bC[V]^{[G,G]}=\bigoplus_{\mu\in
  G^\vee}\bC[V]^G_\mu$ where $\bC[V]^G_\mu$ is a rank 1 reflexive
$\bC[V]^G$ module associated to a character $\mu\in G^\vee$. Given a
resolution $\varphi: X\ra V/G$ the push-forward map allows to identify
spaces of sections of line bundles over $X$ with submodules of these
eigen-modules, $\Gamma(X,\cO_X(D)) \hookrightarrow
\Gamma(V/G,\cO_{V/G}(\varphi_*D))$.

If $D=\sum_i a_iE_i$, where $E_i$'s are exceptional divisors of
$\varphi$ and $a_i$'s are integers, then it follows that
$\Gamma(X,\cO_X(D)) = \{f\in\Gamma(V/G,\cO_{V/G}):\ \forall_i\ \
\nu_{E_i}(f)\geq -a_i\}$ where $\nu_{E_i}$'s are divisorial valuations on
the field $\bC(X)=\bC(V)^G$. This observation does not make much sense
for a general $D$ but, fortunately, in case of symplectic resolutions
each valuation $\nu_{E_i}$ can be related to a monomial valuation
$\nu_{T_i}$ on the field of fractions of $\bC[V]$ which comes from the
action of the symplectic reflection $T_i$ of $V$ associated to $E_i$
via the McKay correspondence. We use this idea in to construct the
ring $\cR$ in Theorem \ref{main_theorem}.

The resolution $X\ra V/G$ is recovered as a GIT quotient of $\Spec\cR$
with respect to the action of the algebraic torus $\bT_{\Cl(X)}$ which
is associated to grading of $\cR$ in the class group $\Cl(X)$. We do
it in Section \ref{section_proofs}. Although the construction of a GIT
quotient and verification of its smoothness is, in general,
conceptually clear it is still computationally involved. Firstly, we
find out that a natural choice of a character $\kappa$ of
$\bT_{\Cl(X)}$ yields a good GIT quotient, see subsection
\ref{section_irrelevant_ideal}. Secondly, we verify that the resulting
GIT quotient is indeed smooth, see subsection
\ref{section_singularities}. To perform the calculations we use an
embedding of $\Spec\cR$ in an affine space, in which the action of
$\bT_{\Cl(X)}$ is diagonal, see subsection
\ref{section-generators}. Next, we consider a covering of $\Spec\cR$
by sets associated to orbits of the big torus of the affine
space. After all reductions and using symmetries we are left with a
manageable computational problem which is calculated and
cross-calculated with standard algebraic software: \cite{M2},
\cite{Singular} and \cite{sage}.

A striking consequence of our calculation of the resolution
$\varphi^\kappa: X^\kappa\ra V/G$ is that it has the unique 2-dimensional
fiber containing, as a component, the blow-up of $\bP^2$ at four general
points. In terms of McKay correspondence this component is related to
the only nontrivial central element of the group $G$. The Picard group
of $X^\kappa$ can be identified with the Picard group of this
surface. In fact, different resolutions $\varphi: X\ra V/G$ are in
relation to Zariski decomposition of divisors on that surface; we
describe the geometry of all resolutions and their flops, see Section
\ref{geom}.

In the final section of the paper we follow the program initiated in
\cite{AW-Kummer} and \cite{DontenKummer}. We produce a representation
of the group $G$ in the group of automorphisms of an abelian 4-fold
which is the product of 4 elliptic curves with complex
multiplication. The resulting quotient admits a symplectic resolution
hence we obtain a new Kummer-type symplectic 4-fold,
\ref{Kummer4fold-3}. Thus, the second main result of our paper is the
following
\begin{theorem}\label{2nd_main_theorem}
  Let $\bE$ be an elliptic curve with the complex multiplication by
  $i=\sqrt{-1}$ and $G$ the group defined in \ref{the-group}.  Then
  there exists an embedding $G\ra G'\subseteq Aut(\bE^4)$ such that
  the quotient $\bE^4/G'$ has a resolution which is a Kummer
  symplectic 4-fold $X$ with $b_2(X)=23$ and $b_4(X)=276$.
\end{theorem}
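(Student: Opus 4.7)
The plan follows the Kummer-type strategy of \cite{AW-Kummer} and \cite{DontenKummer}. First I would verify that the matrix representation of $G$ on $V$ described in Section \ref{the-group} has all entries in the ring of Gaussian integers $\bZ[i]$, so that the standard lattice $\Lambda=\bZ[i]^4\subset\bC^4$ is preserved. Identifying $\bE=\bC/\bZ[i]$ then yields an inclusion of $G$ into the group of origin-fixing automorphisms of $\bE^4$. The enlarged group $G'$ is obtained by combining the image of $G$ with an appropriate subgroup $T\subseteq\bE^4[n]$ of torsion translations, so that $G'=\langle G,T\rangle\subseteq\Aut(\bE^4)$ contains $G$. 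The subgroup $T$ is chosen so that every stabiliser $G'_p$ for $p\in\bE^4$ acts on the tangent space as a subgroup of $G$ which itself admits a local symplectic resolution, mirroring the procedure of \cite{AW-Kummer}.

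Second, I would build the global resolution by gluing. The singular locus of $\bE^4/G'$ decomposes according to the conjugacy classes of non-identity elements of $G'$. At a fixed point $p\in\bE^4$ of a stabiliser $G'_p\subseteq G'$, an analytic neighbourhood of the image in $\bE^4/G'$ is isomorphic to $V/G'_p$, where $G'_p$ is identified with its linear action on $T_p\bE^4\simeq V$. When $G'_p$ acts as the full group $G$, Theorem \ref{main_theorem} furnishes a local symplectic resolution; for smaller stabilisers one appeals to \cite{BellamySchedler}, or in the surface case to the classical McKay correspondence. Gluing these local models produces a global crepant resolution $X\to\bE^4/G'$ on which $\omega$ descends to a holomorphic symplectic form, so $X$ is hyperk\"ahler of Kummer type.

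The main obstacle is the numerical verification $b_2(X)=23$ and $b_4(X)=276$. For this I would invoke Kaledin's symplectic McKay correspondence in the form
\[
H^{2k}(X,\bQ)\ \simeq\ \bigoplus_{[g]\in\mathrm{Conj}(G')}H^{2k-2\,\mathrm{age}(g)}\bigl((\bE^4)^g/Z_{G'}(g),\bQ\bigr),
\]
which reduces the computation to enumerating the conjugacy classes of $G'$, the connected components of each fixed locus $(\bE^4)^g$, the induced action of the centraliser $Z_{G'}(g)$, and the age of each class. The combinatorics is substantial since $|G'|\geq 32$ and $\bE^4$ carries a rich torsion structure, but a careful bookkeeping by orbifold cohomological degree should recover precisely these Betti numbers, identifying $X$ as a new Kummer-type hyperk\"ahler four-fold.
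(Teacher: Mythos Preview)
Your outline diverges from the paper in the construction of $G'$ and leaves the two decisive steps unverified.

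The paper does \emph{not} enlarge $G$ by translations. Its $G'$ is isomorphic to $G$, still of order $32$: one writes down an explicit $W\in GL(4,\bC)$ and sets $T_i'=W^{-1}T_iW$, checking that the conjugates again lie in $SL(4,\bZ[i])$ and --- this is the whole point --- that the induced action of $G'/\langle -I\rangle$ on the $2$-torsion module $M^4=(\bZ_2[i])^4$ fixes $M_0^4=(1+i)M^4$ pointwise while giving every element of $M^4\setminus M_0^4$ isotropy exactly $\langle[\pm T_i']\rangle$ for some~$i$ (Proposition~\ref{Kummer4fold-1}, Lemma~\ref{Kummer4fold-2}). Your observation that the original $T_i$ already have Gaussian-integer entries is correct but insufficient: for instance $T_0\equiv I\pmod 2$, and under the naive action the $2$-torsion point $(1,1,0,0)\in M^4\setminus M_0^4$ has isotropy $\langle T_0,T_2\rangle$, a dihedral group of order~$8$, which is neither $G$ nor any $\langle T_i,-T_i\rangle$. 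The delicate content of the theorem is precisely producing a $\bZ[i]$-integral conjugate with the prescribed isotropy pattern; your sentence ``$T$ is chosen so that every stabiliser \ldots\ admits a local symplectic resolution'' assumes exactly what has to be constructed and verified.

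For the Betti numbers the paper bypasses any orbifold-cohomology bookkeeping. The isotropy condition forces exactly $16$ points of type $\bC^4/G$ in the quotient, each lying on five $2$-dimensional singular components; the Kummer-surface analysis of $N(T_i')/\langle T_i'\rangle\cong Q_8$ in Section~\ref{Kummer-surface} shows each such component contains at most four of these points, so there are at least $16\cdot 5/4=20$ components and hence at least $20$ exceptional divisors on any resolution. One then invokes Guan's constraint $b_2\le 23$ for irreducible holomorphic symplectic fourfolds, with $b_4=276$ at equality, to finish. This is both much shorter than the McKay-type enumeration you propose and, more importantly, is actually executable once $G'$ is pinned down --- which your outline never does.
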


\subsection{Notation}
We use the standard notation in set and group theory.
A commutator of $G$ is denoted by $[G,G]$ and by $Ab(G)$ we denote its
abelianization $G/[G,G]$. A group of characters of $G$ is
$G^\vee=\Hom(G,\bC^*)$. The quotient group $\bZ/\langle r\rangle$ will
be denoted by $\bZ_r$ with $[d]_r$ denoting the class of $d\in\bZ$. By
$\lfloor d/r\rfloor$ we denote the integral part (round-down) of the
fraction $d/r$.  If $G$ acts on a set $B$ then by $B^G$ we denote the
set of the fixed points of the action. In particular if $G$ acts by
homomorphisms on a ring $B$ then by $B^G$ we denote the ring of
invariants of the action.

A torus $\bT$ means an algebraic torus with finite lattice of
characters (called also monomials) $M=M_\bT=\Hom_{alg}(\bT,\bC^*)$ and
dual lattice of 1-parameter subgroups $N=M^*$. Given a finitely
generated free abelian group $M$ we define associated torus
$\bT_M=\Hom(M,\bC^*)$ with lattice of characters equal $M$.  We drop
subscripts whenever it makes no confusion. The pairing $M\times
N\rightarrow \bZ$ is denoted by $(u,v)\mapsto \langle u,v\rangle$.

For $u\in M_\bT$ by $\chi^u$ we denote the character $\chi^u:
\bT\rightarrow\bC^*$. By $\bC[M_\bT]$ we denote the ring of Laurent
polynomials graded in the lattice $M_\bT$.  More generally, for a
cone $\sigma\subset M_\bQ$ by $\bC[M\cap\sigma]$ we understand the
respective subalgebra of $\bC[M]$.  For the lattice $N$ or $M$ with a
given basis by $\sigma^+_N$ or, respectively, $\sigma^+_M$ we will
denote the convex cone generated by the basis, to which we refer as
positive orthant. Thus, for a $\bC$-linear space $V$ we have
$\bC[V]\subset\bC[M_\bT]$ where $\bC[V]$ is the ring of polynomials in
linear coordinates of $V$ and $\bT=\bT_V$ is the standard torus acting
diagonally on coordinates of $V$.

All varieties are defined over the field of complex numbers. By
$\bC(X)$ we denote the field of rational functions on a variety $X$
while by $\bC[X]$ we denote the algebra of global functions on $X$.
By $\Spec A$ we understand the maximal spectrum of a ring $A$; in
particular if $X$ is an affine variety then $X=\Spec\bC[X]$.

For a $\bQ$-factorial variety $X$ by $\NU(X)$ and $\Nu(X)$ we will
denote the $\bR$-linear space of divisors and, respectively, of proper
1-cycles on $X$, modulo numerical equivalence. The class of a divisor
$D$ or a curve $C$ will be denoted by $[D]$ and $[C]$,
respectively. We have a natural intersection pairing of these two
spaces.

A cone in a $\bR$-vector space $N$ generated by elements $v_1,
v_2\dots$ is, by definition, $cone(v_1,v_2\dots)=\sum_i\bR_{\geq
  0}v_i$.  By $\Nef(X)\subset\NU(X)$ we denote the cone of divisors
which are non-negative on effective 1-cycles. By $\Eff(X)$ and
$\Mov(X)$ we denote the cones in $\NU(X)$ which are $\bR_{\geq
  0}$-spanned by the classes of effective divisors and, respectively,
by divisors whose linear systems have no fixed component.

A blow-up of $\bP^2$ in $r$ general points will be denoted by
$\bP^2_r$. The blow-up of $\bP^2$ in three (different) collinear
points will be denoted by $\bP^2_{\overline{3}}$.  

\par\medskip

\noindent {\bf Acknowledgments.} We would like to thank J\"urgen
Hausen for informing us about \cite{HausenPrivate} and Maksymilian
Grab for pointing out a mistake in a previous version of this paper.
We also thank the referee for a thorough review of this paper.

%-------------------------------------------------------------------%

\section{Preliminaries}\label{prelim}
\subsection{The total coordinate ring}\label{CoxRing}
Let $X$ be a normal $\bQ$-factorial variety over the field of complex
numbers. We assume that $\bC[X]$ is finitely generated, its
invertible elements are in $\bC^*$, and $X$ is projective (hence
proper) over $\Spec\bC[X]$.
In what follows we also assume that the divisor class group $\Cl(X)$
is finitely generated.

In order to define the total coordinate ring of $X$, called also the
Cox ring of $X$, we set
\begin{equation}\label{Coxring-def}
\cR(X)=\bigoplus_{[D]\in\Cl(X)}\Gamma(X,\cO_X(D))
\end{equation}
where $\Gamma(X,\cO_X(D))$ denotes the space of global sections of the
reflexive sheaf associated to the linear equivalence class of the
divisor $D$. It is standard to use the identification
$\Gamma(X,\cO_X(D))= \{f\in\bC(X)^*: \pdiv(f)+D\geq 0\}\cup\{0\}$ and
to say that a non-zero $f\in\Gamma(X,\cO_X(D))$ is associated with an
effective divisor $D'=\pdiv(f)+D$. The divisor $D'$ is the zero locus
of this section of $\cO_X(D)$ which we will usually denote by
$f_{D'}$. Note that by our assumptions the relation $D'\leftrightarrow
f_{D'}$ is unique up to multiplication by a constant from $\bC^*$.

Now, to define the multiplication in $\cR(X)$ properly we
need to set the inclusion $\Gamma(X,\cO_X(D))\subset \bC(X)$ so
that it does not depend on the choice of the divisor $D$ in its linear
equivalence class.  If $\Cl(X)$ is a free finitely generated group (no
torsions) then we choose divisors $D_1,\dots, D_m$ whose classes make
a basis of $\Cl(X)$ and for $D$ linearly equivalent to $\sum_i
a_iD_i$, with $a_i\in\bZ$, we define
\begin{equation}\label{sections-basis}
  \begin{array}{ll}
    \Gamma(X,\cO_X(D))&=\left\{f\in\bC(X)^*:\ \pdiv(f)+\sum_i a_iD_i\geq 0\right\}\cup\{0\}\\
    &= \left\{f\in\cO_X(X\setminus\bigcup_iD_i)\subset\bC(X):
      \forall_i\ \  \nu_{D_i}(f)\geq -a_i\right\}
  \end{array}
\end{equation}
where the second equality makes sense if $D_i$'s are prime divisors
and for every $i$ we take $\nu_{D_i}: \bC(X)\ra\bZ\cup\{\infty\}$, a
valuation centered at the divisor $D_i$. This way we present each
graded piece of $\cR(X)$ as a $\bC[X]$-sub-module of $\bC(X)$ and
consequently we define the multiplication in $\cR(X)$ as inherited
from $\bC(X)$. It can be checked that this defines a ring structure on
$\cR(X)$ which does not depend on the choice of $D_i$'s (different
choices give isomorphic rings).  This definition has to be adjusted if
$\Cl(X)$ has torsions. We advise the reader to consult
\cite[Ch.~1]{CoxRings} for details.

We recall that, for a Weil divisor $D$ on a normal variety $X$, the
sheaf $\cO_X(D)$ is reflexive of rank one. The association
$D\ra\cO_X(D)$ determines the bijection between the class group
$\Cl(X)$ and the isomorphism classes of reflexive rank-1 sheaves over
$X$. Locally, in a similar manner, we can define a graded sheaf of
divisorial algebras over $X$ which we will denote by $\cR_X$.

Suppose that $\cR(X)$ is a finitely generated $\bC$-algebra. Then
$\Spec\cR(X)$ is an affine variety and the grading of $\cR(X)$ in
$\Cl(X)$ determines the action of the algebraic quasi-torus
$\bT_{\Cl(X)}=\Hom(\Cl(X),\bC^*)$.

As we assumed that $X$ is projective over $\Spec\bC[X]$, we can use
geometric invariant theory, or GIT, \cite{MumfordGIT}, to recover $X$
as a quotient of $\Spec\cR(X)$. Namely, a relatively ample divisor
$H$ on $X$ determines a linearization of this action on the trivial
line bundle over $\Spec\cR(X)$ and the irrelevant ideal $\cI
rr_H=\sqrt{(\Gamma(X, \cO(mH)): m>0)}$ which determines the set of
unstable points with respect to this linearization. The quotient of
its complement is $X$.  The choice of a big but not necessarily ample
divisor on $X$ yields a GIT quotient which is birational to $X$.

This line of argument works in much broader set up as it is explained
in \cite[Ch.~1]{CoxRings}. In the present paper we will concentrate on
a special situation of quotient singularity and its resolution which
will be discussed in detail in subsequent sections.

\subsection{Small blow-ups of $\bP^2$ }\label{P^2_4}
The surface obtained by blowing up $r$ general points on $\bP^2$ will
be denoted by $\bP^2_r$. If $r\leq 3$ then $\bP^2_r$ is a toric
surface whose geometry and Cox ring are well known.  Here, for the sake
of completeness, we recall properties of $\bP^2_4$.

If $\bP^2_4$ is obtained by blowing $\bP^2$ at points $p_1,\dots,p_4$
then by $F_{0i}$ we denote the exceptional $(-1)$-curve over $p_i$ and
by $F_{ij}$, with $1\leq i< j \leq 4$, we denote strict transform of
the line passing through $\{p_1\dots,p_4\}\setminus\{p_i,p_j\}$. With
this notation we have the following well known facts, see
e.g.~\cite{ManinCubicForms}.

\begin{lemma}\label{geom-P^2_4}
The surface $\bP^2_4$ has the following geometry:
\begin{enumerate}
\item For two pairs of numbers $0\leq i<j\leq 4$ and $0\leq p<q\leq 4$
  we have $F_{ij}\cdot F_{pq}= |\{i,j\}\cup\{p,q\}|-3$.
\item The surface $\bP^2_4$ admits five distinct birational morphisms
  $\beta_i: \bP^2_4\ra \bP^2$ such that $\beta_i$ contracts four
  $(-1)$-curves $F_{pq}$ for $i\in\{p,q\}$.
\item The surface $\bP^2_4$ admits five conic fibrations
  $\alpha_i:\bP^2_4\ra\bP^1$, each of them having three reducible
  fibers $F_{rs}\cup F_{pq}$, where $\{i,p,q,r,s\}=\{0,\dots,4\}$.
\end{enumerate}
\end{lemma}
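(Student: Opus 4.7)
The plan is to work in the Picard group $\Pic(\bP^2_4)$ with the standard basis $H, E_1, E_2, E_3, E_4$, where $H$ is the pullback of $\cO_{\bP^2}(1)$ and $E_i$ is the exceptional divisor over $p_i$, endowed with the intersection form $H^2=1$, $H\cdot E_i=0$, $E_i\cdot E_j=-\delta_{ij}$. In this basis I would identify $F_{0i}=E_i$ for $i=1,\dots,4$, and $F_{ij}=H-E_k-E_l$ for $1\leq i<j\leq 4$, where $\{k,l\}=\{1,2,3,4\}\setminus\{i,j\}$. Part~(1) then reduces to a direct case check on $F_{ij}\cdot F_{pq}$: split according to whether $0\in\{i,j\}\cup\{p,q\}$ and according to $|\{i,j\}\cap\{p,q\}|$, and verify $F_{ij}\cdot F_{pq}=|\{i,j\}\cup\{p,q\}|-3$ line by line. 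In particular each $F_{ij}$ is a $(-1)$-curve.

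For part~(2), I would fix $i\in\{0,\dots,4\}$ and observe that, by~(1), the four curves in $\{F_{pq}:i\in\{p,q\}\}$ are pairwise disjoint $(-1)$-curves. Castelnuovo's contractibility criterion then produces a simultaneous contraction $\beta_i\colon\bP^2_4\to S_i$ to a smooth projective surface. The target $S_i$ is rational, has Picard rank $5-4=1$ and satisfies $K_{S_i}^2=K_{\bP^2_4}^2+4=5+4=9$, so the Enriques--Kodaira classification of smooth minimal rational surfaces forces $S_i\cong\bP^2$.

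For part~(3), I would exhibit the five fibrations by explicit linear systems. For $i=0$ take $|2H-E_1-E_2-E_3-E_4|$, the pencil of strict transforms of plane conics through $p_1,\dots,p_4$; genericity of these four points makes the pencil base-point-free on $\bP^2_4$ (two distinct such conics meet only at the $p_j$ by B\'ezout), so it defines a morphism $\alpha_0\colon\bP^2_4\to\bP^1$ whose reducible members are the three pairs of lines splitting the four points into two, i.e.\ the $F_{pq}+F_{rs}$ with $\{p,q\}\sqcup\{r,s\}=\{1,2,3,4\}$. For $i\in\{1,\dots,4\}$ I would use $|H-E_i|$, the pencil of strict transforms of lines through $p_i$; its reducible fibres arise exactly when such a line passes through a second point $p_j$, in which case the fibre splits as $F_{0j}+F_{kl}$ with $\{k,l\}=\{1,2,3,4\}\setminus\{i,j\}$. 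The whole argument has no serious technical obstacle: the one mild annoyance is purely notational, namely keeping the indices aligned so that the combinatorial identities in (1) fall out cleanly; parts (2) and (3) are then short classical consequences about the degree-$5$ del Pezzo surface.
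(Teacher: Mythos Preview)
Your argument is correct in all three parts: the identification $F_{0i}=E_i$, $F_{ij}=H-E_k-E_l$ (with $\{k,l\}=\{1,2,3,4\}\setminus\{i,j\}$) makes the case-check in~(1) go through, Castelnuovo together with $K^2=9$ and Picard rank~$1$ pins down $S_i\cong\bP^2$ in~(2), and the pencils $|2H-E_1-\cdots-E_4|$ and $|H-E_i|$ produce exactly the fibrations with the claimed reducible fibres in~(3).

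There is nothing to compare against in the paper: the lemma is stated there as ``well known facts'' with a reference to Manin's \emph{Cubic Forms}, and no proof is given. Your write-up is essentially the standard proof one would expect for this classical description of the degree-$5$ del Pezzo surface.
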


Let us consider a 5-dimensional $\bR$-vector space $W$ with a basis
$e_0,\dots,e_4$. We define the intersection product on $W$ by setting
$e_i^2=-3$ and $e_i\cdot e_j=1$, if $i\ne j$.

For $0\leq i<j\leq 4$ we set $f_{ij}=(e_i+e_j)/2$. If moreover we set
$\kappa=\sum_i e_i$ and $c_i=[\kappa-e_i]/2=[(\sum_j e_j)-e_i]/2$,
then $\kappa\cdot e_i=\kappa\cdot f_{ij}=1$, $\kappa^2=5$, and
$f_{ij}^2=-1$. Also $e_i\cdot c_i=2$ and $e_i\cdot c_j=0$ for $i\ne j$
hence the base $e_i/2$, for $i=0,\dots,4$ is dual, in terms of the
intersection product, to the base consisting of $c_i$'s.  In
particular, $cone(e_0,\dots, e_4)$ is where the intersection with
$c_i$'s is positive.

By $\Lambda\subset W$ we denote the lattice spanned by $f_{ij}$'s. We
note that $\sum_i a_i(e_i/2)\in\Lambda$ if $a_i$'s are integral and
$\sum_i a_i$ is even. The following can be easily verified.

\begin{lemma}\label{Nef-Eff-P^2_4}
  The space $\NU(\bP^2_4)=\Nu(\bP^2_4)$ with intersection product and
  the lattice of integral divisors $\Pic(\bP^2_4)$ can be identified
  with $W\supset\Lambda$, so that $[F_{ij}]=f_{ij}$ and
  $[-K_{\bP^2_4}]=\kappa$.  Under this identification the cone
  $\Eff(\bP^2_4)$ is spanned by $f_{ij}$'s. It has 10 facets:
  \begin{itemize}
  \item five facets of $\Eff(\bP^2_4)$ are associated to morphisms
    $\alpha_i$ from \ref{geom-P^2_4} and they are contained in the
    facets of $\sigma^+$; given $i\in\{0,\dots,4\}$ a facet of this
    type is spanned by six $f_{pq}$'s such that $i\not\in\{p,q\}$
    and it is perpendicular (in the sense of the intersection product)
    to $c_i=(\kappa-e_i)/2$;
  \item five simplicial facets of $\Eff(\bP^2_4)$ are associated to
    morphisms $\beta_i$ from \ref{geom-P^2_4} and they are obtained by
    cutting $\sigma^+$ with a hyperplane perpendicular to
    $(\kappa+e_i)/2$; this facet of $\Eff(\bP^2_4)$ is spanned by four
    $f_{pq}$'s such that $i\in\{p,q\}$.
  \end{itemize}
  As the consequence, the cone $\Nef(\bP^2_4)$ is spanned by the
  classes of $(\kappa\pm e_i)$'s.
\end{lemma}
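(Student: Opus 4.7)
The plan is to transport everything from $\bP^2_4$ to the abstract model $(W,\Lambda)$ via the rule $f_{ij}\mapsto [F_{ij}]$, and then read the cone assertions off as combinatorial data. First I would verify that this rule extends to a unique isometry of real inner-product spaces $\psi\colon W\xrightarrow{\sim}\NU(\bP^2_4)$ carrying $\Lambda$ onto $\Pic(\bP^2_4)$. For this it suffices to check that the pairings $f_{ij}\cdot f_{pq}$ computed in $W$ from $e_i^2=-3$ and $e_i\cdot e_j=1$ (for $i\ne j$) agree with the intersection numbers $F_{ij}\cdot F_{pq}$ from Lemma \ref{geom-P^2_4}(1); a single expansion of $4(f_{ij}\cdot f_{pq})=(e_i+e_j)\cdot(e_p+e_q)$ handles the three cases $|\{i,j\}\cap\{p,q\}|\in\{0,1,2\}$. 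Bijectivity and the Picard-lattice identification then follow because the ten images $[F_{ij}]$ generate $\Pic(\bP^2_4)$ (for instance $F_{01},\dots,F_{04},F_{12}$ form a $\bZ$-basis, being the four exceptional divisors and the strict transform of a line). The identity $[-K_{\bP^2_4}]=\kappa$ drops out because both classes pair to $1$ with every $f_{ij}$: on the Picard side via adjunction $-K\cdot F_{ij}=F_{ij}^2+2=1$, and on the $W$-side via the direct computation $\kappa\cdot e_i=-3+4=1$ for each $i$, so $\kappa\cdot f_{ij}=1$.

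For the effective cone I would invoke the classical fact that on a smooth del Pezzo surface of degree $5$, $\Eff$ is polyhedral and generated by the classes of the $(-1)$-curves, of which there are exactly the ten $F_{ij}$. Under $\psi$ this reads $\Eff(\bP^2_4)=cone(f_{ij}\,:\,0\le i<j\le 4)$. To identify the facets, I would compute in $W$ the values of the ten linear functionals $c_i=(\kappa-e_i)/2$ and $(\kappa+e_i)/2$ on the extremal rays $f_{pq}$. A short calculation yields $c_i\cdot f_{pq}=0$ precisely when $i\notin\{p,q\}$ (and $=1$ otherwise), and $((\kappa+e_i)/2)\cdot f_{pq}=0$ precisely when $i\in\{p,q\}$ (and $=1$ otherwise). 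Hence each of these ten functionals is nonnegative on $\Eff(\bP^2_4)$ and supports a genuine facet: a non-simplicial facet spanned by the six $f_{pq}$ with $i\notin\{p,q\}$, matching the conic fibration $\alpha_i$, and a simplicial facet spanned by the four $f_{pq}$ with $i\in\{p,q\}$, matching the divisorial contraction $\beta_i$.

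The remaining task is to show these are all the facets and to deduce the description of $\Nef(\bP^2_4)$; both follow together. By surface duality, extremal rays of $\Nef(\bP^2_4)$ correspond bijectively to facets of $\Eff(\bP^2_4)$, so it suffices to exhibit $10$ extremal rays of $\Nef$. These come from the classical description of the nef cone of a degree-$5$ del Pezzo: five pullback classes from $\bP^2$ via the morphisms $\beta_i$, and five fibre classes of the conic fibrations $\alpha_i$. The self-intersection check $((\kappa-e_i)/2)^2=(5-2-3)/4=0$ and $((\kappa+e_i)/2)^2=(5+2-3)/4=1$ identifies these with the fibre-type and pullback-type extremal rays respectively, giving $\Nef(\bP^2_4)=cone(\kappa\pm e_i\,:\,i=0,\dots,4)$, as claimed. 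There is no serious obstacle; the only point of care is to carry out every pairing in the unusual bilinear form on $W$ rather than silently translating to the $(H,E_1,\dots,E_4)$ basis, since the whole point is that the abstract model $(W,\Lambda)$ is the object to be reused in the subsequent geometric analysis of the $81$ resolutions.
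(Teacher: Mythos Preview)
Your proposal is correct and essentially carries out the straightforward verification the paper omits (the paper states the lemma with only the remark ``The following can be easily verified'' and gives no proof). Your isometry check via $4(f_{ij}\cdot f_{pq})=(e_i+e_j)\cdot(e_p+e_q)$, the adjunction argument for $[-K]=\kappa$, and the facet computations $c_i\cdot f_{pq}$ and $((\kappa+e_i)/2)\cdot f_{pq}$ are exactly the intended calculations. One small point: to conclude that the ten hyperplanes exhaust the facets you appeal to the classical fact that a degree-$5$ del Pezzo has precisely ten extremal contractions; this is fine, but you could also finish purely inside $W$ by checking directly that each of your ten faces is $4$-dimensional (four of the spanning $f_{pq}$'s are linearly independent in each case) and that the ten half-spaces already cut out the cone on the $f_{ij}$'s, avoiding any circularity.
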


The cone $\Eff(\bP^2_4)$ is divided into Zariski chambers depending on
the Zariski decomposition of the divisors whose classes are inside the
interior of each chamber, see \cite{BauerFunkeNeumann}. For example,
the ``central'' chamber is the cone $\Nef(\bP^2_4)$ and if $[D]$ is
in the interior of $\Nef(\bP^2_4)$ then the linear system $|mD|$, $m\gg
0$ determines a morphism into the projective space whose image is
$\bP^2_4$. Equivalently, $\Proj\bigoplus_{m\geq 0}
\Gamma(\bP^2_4,\cO(mD)) = \bP^2_4$. For $D$ outside the nef cone the
image of the rational map defined by $|mD|$ (or this projective
spectrum) will depend on the intersection of $D$ with $(-1)$ curves
$F_{ij}$. This determines the division of $\Eff(\bP^2_4)$ into the
chambers in question. We summarize the information in Table
\ref{ZariskiP^2_4}, see e.g.~\cite{BauerFunkeNeumann}\par\medskip

\begin{table}[h]\caption{Zariski chambers and birational images of
    $\bP^2_4$}\label{ZariskiP^2_4}
\begin{tabular}{l|l}
  $\Proj(\bigoplus_{m\geq 0}\Gamma(\bP^2_4,\cO(mD))$&
  number of chambers with $D$ of this type\\
  \hline
  $\bP^2_4$&one, nef cone\\
  $\bP^2_3$&ten\\
  $\bP^2_2$&thirty\\
  $\bP^2_1$&twenty\\
  $\bP^2$&five, associated to simplicial facets of $\Eff(\bP^2_4)$\\
  $\bP^1\times\bP^1$&ten\\
\end{tabular}
\end{table}
\par\medskip
The following result is known, see e.g.~\cite{StillmanTestaVelasco}.

\begin{proposition}\label{Cox-P^2_4}
  The total coordinate ring of $\bP^2_4$ coincides with the projective
  coordinate ring of the Grassmann variety $Gr(2,W)$ of planes in a 5
  dimensional vector space $W$ which is embedded in $\bP(\bigwedge^2
  W^*)$ via the Pl\"ucker embedding. That is, $\cR(\bP^2_4)$ is the
  quotient of the polynomial ring in variables $w_{ij}$, for $0\leq
  i<j\leq 4$, by the ideal generated by the following quadratic
  trinomials
  $$\begin{array}{cc}
    w_{14}w_{23}+w_{13}w_{24}-w_{12}w_{34}&
    w_{04}w_{23}-w_{03}w_{24}-w_{02}w_{34}\\
    w_{04}w_{13}+w_{03}w_{14}-w_{01}w_{34}&
    w_{04}w_{12}-w_{02}w_{14}-w_{01}w_{24}\\
    w_{03}w_{12}+w_{02}w_{13}-w_{01}w_{23}
  \end{array}$$

  The action of the Picard torus can be identified with that of
  $\bT_\Lambda=\bT_W/\langle -I_W\rangle$, where $\bT_W$ is the
  standard torus of the space $W$ with which acts on $\bigwedge^2 W$
  with isotropy $\langle -I_W\rangle$. In other words, it is given by
  a grading in $\Lambda\subset\bZ^5$ such that $\deg
  w_{01}=(1,1,0,0,0),\ \deg w_{02}=(1,0,1,0,0),\dots,\ \deg
  w_{34}=(0,0,0,1,1)$.

  The GIT quotients of the affine variety $\Spec \cR(\bP^2_4)$ depend
  on the choice of the linearization of the torus action given by a
  character in the lattice $\Lambda$ . In particular, the choice of a
  divisor class in the interior of each of the Zariski chambers of
  $\bP^2_4$ determines the quotient as in Table \ref{ZariskiP^2_4}.
\end{proposition}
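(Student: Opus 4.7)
The plan is to construct a surjection $\pi\colon\bC[w_{ij}]\twoheadrightarrow\cR(\bP^2_4)$, identify its kernel with the Pl\"ucker ideal, and then read off the grading and GIT description from the general formalism of \S\ref{CoxRing}. For the surjection, let $s_{ij}$ denote the (essentially unique) non-zero section of $\cO_{\bP^2_4}(F_{ij})$, which vanishes exactly along the $(-1)$-curve $F_{ij}$; existence and uniqueness up to scalar follow from $h^0(F_{ij})=1$. Since $\bP^2_4$ is a smooth del Pezzo surface of degree $5$, the classical theorem of Batyrev--Popov guarantees that $\cR(\bP^2_4)$ is generated as a $\bC$-algebra by the ten sections $s_{ij}$ of its $(-1)$-curves. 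The assignment $w_{ij}\mapsto s_{ij}$ therefore produces a surjective $\bC$-algebra homomorphism $\pi$, homogeneous with respect to the grading discussed below.

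Next I show that $\pi$ factors through the Pl\"ucker ideal $I_{\mathrm{Pl}}$ and that the induced map is an isomorphism. For each $i\in\{0,\dots,4\}$, the conic fibration $\alpha_i\colon\bP^2_4\to\bP^1$ of Lemma \ref{geom-P^2_4}(3) has exactly three reducible fibres $F_{pq}+F_{rs}$ with $\{p,q,r,s\}=\{0,\dots,4\}\setminus\{i\}$. Hence the three monomials $s_{pq}s_{rs}$ all represent the class $\alpha_i^{*}\cO_{\bP^1}(1)$, whose space of sections is $2$-dimensional, so they satisfy a unique (up to scalar) linear relation; after suitably rescaling the $s_{ij}$, the resulting five trinomials become exactly the displayed Pl\"ucker relations. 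To conclude that the induced surjection $\bC[w_{ij}]/I_{\mathrm{Pl}}\twoheadrightarrow\cR(\bP^2_4)$ is an isomorphism, I compare Krull dimensions: the affine Pl\"ucker cone is an integral domain of dimension $\dim Gr(2,5)+1=7$, while $\cR(\bP^2_4)$ is an integral domain (as $\bP^2_4$ is smooth, hence factorial, with free Picard group) of dimension $\dim\bP^2_4+\operatorname{rank}\Pic(\bP^2_4)=2+5=7$. A surjection of integral domains of the same finite Krull dimension is an isomorphism.

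For the grading, the identification $[F_{ij}]=f_{ij}=(e_i+e_j)/2$ from Lemma \ref{Nef-Eff-P^2_4}, combined with the doubling map $f_{ij}\mapsto e_i+e_j$, embeds $\Lambda$ as the index-$2$ sublattice of $\bZ^5$ of vectors with even coordinate sum and yields the stated degrees $\deg w_{ij}=e_i+e_j$. Dually, $\bT_\Lambda=\bT_W/\langle -I_W\rangle$, the diagonal scaling by $-1$ acting trivially on $\bigwedge^{2}W$. The GIT statement then follows from the general principle recalled in \S\ref{CoxRing}: $\bP^2_4$ is recovered as $\Spec\cR(\bP^2_4)/\!/_{\chi}\bT_\Lambda$ for $\chi$ in the interior of $\Nef(\bP^2_4)$, and varying $\chi$ through the other Zariski chambers of $\Eff(\bP^2_4)$ produces the small birational models listed in Table \ref{ZariskiP^2_4}. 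The only non-elementary input is the Batyrev--Popov generation theorem used in the first step; granted that, the dimension-plus-integrality argument bypasses any explicit Gr\"obner basis computation, which is where the main technical obstacle would otherwise reside.
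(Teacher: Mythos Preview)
The paper does not actually prove this proposition; it records the result as known and cites \cite{StillmanTestaVelasco}. Your outline is a valid route to a proof, and the Krull-dimension-plus-integrality argument at the end is a clean way to avoid verifying a full presentation of the ideal. One step, however, is more delicate than your phrasing suggests. You assert that ``after suitably rescaling the $s_{ij}$, the resulting five trinomials become exactly the displayed Pl\"ucker relations,'' but this is not automatic: modulo the Picard torus $\bT_\Lambda$, which rescales each trinomial by an overall factor, there remain only five effective rescaling parameters against ten independent coefficient ratios, so an \emph{arbitrary} family of five nondegenerate trinomials of the given degrees cannot in general be brought to Pl\"ucker form. What makes the normalisation possible here is the specific geometry of $\bP^2_4$. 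The simplest fix is to place the four blown-up points at $[1\!:\!0\!:\!0]$, $[0\!:\!1\!:\!0]$, $[0\!:\!0\!:\!1]$, $[1\!:\!1\!:\!1]$, write the six sections $s_{jk}$ with $1\le j<k\le 4$ as the explicit linear forms $x_0,x_1,x_2,x_0-x_1,x_0-x_2,x_1-x_2$ on $\bP^2$, and verify the five quadratic identities directly; the remaining sign discrepancies are then absorbed into the choice of the four exceptional sections $s_{0j}$. With that detail supplied, your argument goes through, and your reading of the $\Lambda$-grading and of the GIT statement from the formalism of \S\ref{CoxRing} is correct.
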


\subsection{The group of order 32}\label{the-group}
In what follows we will consider complex matrices acting on a linear
space $V$. The case of our primary interest is when $V$ is of even
dimension and admits a non-degenerate linear 2-form, which we will
call a symplectic form. The group of linear transformations preserving
such a form we will denote by $Sp(V)$ or $Sp(\dim\! V, \bC)$.

By $I_V$ we denote the identity matrix and by $-I_V$ its opposite. We
will usually drop the subscript. Also, for every matrix $A$ by $-A$ we
denote its opposite, that is $-I\cdot A=A\cdot(-I)$. For two matrices
$A$ and $B$ we set $[A,B]=A\cdot B\cdot A^{-1}\cdot B^{-1}$. If the
linear space fixed by a matrix is of codimension 1 then we say that it
is a quasi-reflection. The groups which we will consider contain no
quasi-reflections. If $A\in Sp(V)$ and its fixed point set is of
codimension 2 then we call it a symplectic reflection.

For $V$ of dimension 4 with coordinates $x_1,\dots, x_4$ we consider
the following matrices in $GL(V)$.
\begin{equation}\label{matrices}
\begin{array}{cc}
  T_0=\left(\begin{array}{cccc}1&0&0&0\\0&-1&0&0\\0&0&1&0\\0&0&0&-1\end{array}\right)&
  T_1=\left(\begin{array}{cccc}0&i&0&0\\-i&0&0&0\\0&0&0&-i\\0&0&i&0\end{array}\right)\\ \\
  T_2=\left(\begin{array}{cccc}0&1&0&0\\1&0&0&0\\0&0&0&1\\0&0&1&0\end{array}\right)&
  T_3=\left(\begin{array}{cccc}0&0&0&1\\0&0&-1&0\\0&-1&0&0\\1&0&0&0\end{array}\right)\\
\end{array}
\end{equation}
$$
\begin{array}{c}
  T_4=\left(\begin{array}{cccc}0&0&0&i\\0&0&-i&0\\0&i&0&0\\-i&0&0&0\end{array}\right)
\end{array}
$$

We note that the symplectic form $\omega=dx_1\wedge dx_3 + dx_2\wedge
dx_4$ is preserved by each of the $T_i$'s. Moreover, they are
symplectic reflections of order two.

We list the following facts which can be verified easily.
\begin{lemma}\label{matrices-properties} If, for $i\ne j$ we
  set $R_{ij}=T_i\cdot T_j$, then $R_{ij}$'s are of order 4 and
  moreover the following holds:
  \begin{enumerate}
  \item $[T_i,T_j]=R_{ij}^2=-I$, $T_0\cdot T_1\cdot T_2\cdot T_3\cdot
    T_4=I$, and $R_{ij}=-R_{ji}=R_{ji}^{-1}$
  \item $[T_s,R_{ij}]=-I$ if $s\in\{i,j\}$ and $[T_s,R_{ij}]=I$ if
    $s\not\in\{i,j\}$
  \item For two distinct pairs $i,j$ and $p,q$ we have
    $[R_{ij},R_{pq}] = -I$ if $\{i,j\}\cap\{p,q\}\ne\emptyset$ and
    $[R_{ij},R_{pq}] = I$ if $\{i,j\}\cap\{p,q\}=\emptyset$
  \end{enumerate}
\end{lemma}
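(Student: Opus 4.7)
The plan is to reduce everything to part (1), which requires direct matrix computation, after which parts (2) and (3) follow by formal commutator manipulations. The key algebraic content I will extract from (1) is that the $T_i$ are involutions ($T_i^2 = I$) and that they pairwise anticommute ($T_i T_j = -T_j T_i$ for $i\neq j$); once these hold, the remaining identities are essentially bookkeeping.

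For (1), the fact that each $T_i$ is an involution is immediate from the matrix form. The identity $[T_i,T_j] = -I$ is equivalent, via $T_i^{-1} = T_i$, to the anticommutation $T_i T_j + T_j T_i = 0$. So the computational core of (1) is verifying anticommutation for each of the $\binom{5}{2} = 10$ pairs, plus checking the quintuple product identity $T_0 T_1 T_2 T_3 T_4 = I$, both of which are straightforward $4\times 4$ matrix calculations. Given anticommutation, the remaining assertions in (1) are automatic:
\[
R_{ij}^2 \;=\; T_i T_j T_i T_j \;=\; -T_i^2 T_j^2 \;=\; -I,
\]
whence $R_{ij}$ has order $4$ and $R_{ij}^{-1} = -R_{ij}$, while $R_{ji} = T_j T_i = -T_i T_j = -R_{ij}$, yielding $R_{ji} = -R_{ij} = R_{ij}^{-1}$.

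For (2), if $s\notin\{i,j\}$ then $T_s$ anticommutes with both $T_i$ and $T_j$, so moving $T_s$ past $R_{ij} = T_i T_j$ accumulates two sign flips and $T_s$ commutes with $R_{ij}$, giving $[T_s, R_{ij}] = I$. If $s\in\{i,j\}$, say $s=i$, direct conjugation gives $T_i R_{ij} T_i^{-1} = T_i(T_i T_j)T_i = T_j T_i = -R_{ij}$, hence $[T_i,R_{ij}] = -I$; the case $s=j$ is symmetric. Part (3) proceeds similarly. When $\{i,j\}\cap\{p,q\}=\emptyset$, moving $R_{pq}$ past $R_{ij}$ requires four transpositions of pairwise anticommuting $T$'s, so the signs cancel and $[R_{ij},R_{pq}] = I$. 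When the two pairs share exactly one index, say $i=p$ with $j\neq q$, a short calculation using anticommutation and $T_i^2 = I$ gives $R_{ij} R_{iq} = T_i T_j T_i T_q = -T_j T_q$ while $R_{iq} R_{ij} = T_i T_q T_i T_j = -T_q T_j = T_j T_q$, so $R_{ij} R_{iq} = -R_{iq} R_{ij}$ and $[R_{ij},R_{iq}] = -I$; the relation $R_{ji} = -R_{ij}$ reduces all remaining overlap subcases to this one by relabeling.

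The only real obstacle is the tedium of part (1): ten anticommutation checks and one five-fold product, each a routine matrix multiplication. A conceptually cleaner alternative would be to observe that $(T_0,\dots,T_4)$ realizes a representation of a rank-$5$ Clifford algebra, from which anticommutation follows formally, but for the purposes of a short verification the explicit computation is more direct.
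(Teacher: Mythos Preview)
Your proposal is correct and matches the paper's approach: the paper offers no proof beyond the remark ``which can be verified easily,'' i.e.\ direct computation with the explicit matrices. Your organization---first checking that the $T_i$ are involutions and pairwise anticommute (the Clifford-type relations), then deducing (1)--(3) formally from that---is exactly the right way to make ``verify easily'' into an honest argument, and the commutator manipulations you give for (2) and (3) are all sound.
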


Let $G$ be the group generated by the reflections $T_i$ in $Sp(V)$.
It is worthwhile to note that this group is conjugate in $Sp(V)$ to
the group generated by Dirac gamma matrices, c.f.~\cite{Temple}.

\begin{lemma}\label{group}
  The group $G$ is of order 32 and it has the following properties:
  \begin{enumerate}
  \item There are 17 classes of conjugacy of elements in $G$; two of
    the them consist of single elements $I$ and $-I$, five of them
    contain pairs of opposite reflections $\pm T_i$, for
    $i=0,\dots,4$, and ten conjugacy classes consist of pairs of
    opposite elements $\pm R_{ij}$, with $0\leq i< j\leq 4$.
  \item The commutator $[G,G]$ of $G$ coincides with its center and it
    is generated by $-I$, the abelianization is $Ab(G)= G/[G,G] =
    \bZ_2^4$.
  \item If $N(T_i)<G$ is the normalizer (or centralizer) of the
    reflection $T_i$ then $N(T_i)/\langle T_i\rangle \iso Q_8$, where
    $Q_8$ is the group of quaternions, or a binary-dihedral group, of
    order 8.
\end{enumerate}
\end{lemma}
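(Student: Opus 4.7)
The plan is to extract a normal form for elements of $G$ from the relations in Lemma~\ref{matrices-properties}, and then obtain all three assertions by sign bookkeeping in this normal form. The only computationally nontrivial step will be the distinctness check underpinning $|G|=32$; everything else reduces to the sign calculus encoded in \ref{matrices-properties}.

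\textbf{Order and conjugacy classes.} Since $T_0T_1T_2T_3T_4 = I$ lets one rewrite $T_4 = T_3T_2T_1T_0$, and since $[T_i,T_j]=-I$ means $T_jT_i = -T_iT_j$ for $j\neq i$, every element of $G$ can be put in the form $\pm T_0^{a_0}T_1^{a_1}T_2^{a_2}T_3^{a_3}$ with $a_i\in\{0,1\}$, giving $|G|\leq 32$. A short direct matrix computation---showing for instance that no nonempty signed product of $T_0,\dots,T_3$ equals $I$---confirms that the 32 normal forms yield pairwise distinct matrices, so $|G|=32$. For part (1), $T_jT_iT_j^{-1}=-T_i$ whenever $j\neq i$, and by item (2) of \ref{matrices-properties} one has $T_sR_{ij}T_s^{-1}=-R_{ij}$ whenever $s\in\{i,j\}$. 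Hence each $T_i$ is conjugate to $-T_i$ and each $R_{ij}$ to $-R_{ij}$, while $\pm I$ are central. The resulting 17 classes $\{I\}$, $\{-I\}$, $\{\pm T_i\}_{i=0}^4$, $\{\pm R_{ij}\}_{0\leq i<j\leq 4}$ account for $1+1+2\cdot 5+2\cdot 10=32$ elements and so exhaust $G$.

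\textbf{Commutator, center, abelianization.} On the one hand $-I=[T_0,T_1]\in[G,G]$; on the other, modulo $\langle -I\rangle$ the images of the $T_i$ commute and are involutions, so $[G,G]\subseteq\langle -I\rangle$. Combined with the single relation $\bar T_0\bar T_1\bar T_2\bar T_3\bar T_4=\bar I$, this presents $Ab(G)$ as $\bZ_2^4$. For the center, writing $g=\pm\prod_jT_j^{a_j}$ in normal form and commuting $T_i$ past $g$ gives $gT_ig^{-1}=(-1)^{\sum_{j\neq i}a_j}T_i$ for $i=0,1,2,3$; requiring all four signs to vanish produces a $\bZ_2$-linear system whose only solution is $a_0=\cdots=a_3=0$. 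Since $T_4=T_3T_2T_1T_0$ the condition for $T_4$ is then automatic, so $Z(G)=\langle -I\rangle$.

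\textbf{Normalizer of $T_i$.} Since $T_i$ has order $2$ the normalizer coincides with the centralizer, and orbit--stabilizer applied to the conjugacy class $\{\pm T_i\}$ gives $|C(T_i)|=16$, hence $|C(T_i)/\langle T_i\rangle|=8$. Specialising to $i=0$ and invoking items (2) and (3) of \ref{matrices-properties}, the elements $x=R_{12}$ and $y=R_{23}$ lie in $C(T_0)$, both square to $-I$, and anticommute: $xy=-yx$. Then $(xy)^2=-x^2y^2=-I$, so $x$ and $y$ satisfy $x^2=y^2=(xy)^2=-I$, the standard presentation of $Q_8$; they therefore generate a subgroup of $C(T_0)/\langle T_0\rangle$ of order $\geq 8$, which by the order count is the whole quotient. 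The general case of $i\in\{0,\dots,4\}$ follows by the same argument after relabelling.
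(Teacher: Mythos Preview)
The paper states Lemma~\ref{group} without proof, treating it as a direct verification from Lemma~\ref{matrices-properties}. Your write-up supplies exactly such a verification and is essentially correct.

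One small point in part (3) deserves tightening. You show that $x=R_{12}$ and $y=R_{23}$ satisfy the $Q_8$ relations \emph{inside $C(T_0)$}, and then assert that their images generate a subgroup of $C(T_0)/\langle T_0\rangle$ of order $\geq 8$. Strictly, passing to the quotient could collapse $\langle x,y\rangle$ if $T_0$ happened to lie in it. The cleanest fix is to argue directly in the quotient: since $-I\notin\langle T_0\rangle=\{I,T_0\}$, the image $\overline{-I}$ is a nontrivial involution, so $\bar x\bar y=\overline{-I}\,\bar y\bar x\neq\bar y\bar x$. Hence $\bar x,\bar y$ generate a nonabelian subgroup of the order-8 group $C(T_0)/\langle T_0\rangle$, which must therefore be the whole group; the relations $\bar x^2=\bar y^2=(\bar x\bar y)^2=\overline{-I}$ then identify it as $Q_8$. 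With that adjustment the argument is complete.
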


\subsection{Quotient singularities}\label{sect-quotsing}

Let $G\subset GL(V)$ be a finite group with no quasi-reflections
acting linearly and faithfully on $V\iso\bC^n$. By
$\bC[V]\iso\bC[x_1,\dots, x_n]$ we understand the coordinate ring of
the linear space with the ring of invariants denoted by $\bC[V]^G$
which, by Hilbert-Noether theorem, is finitely generated
$\bC$-algebra. We set $Y=\Spec\bC[V]^G$.

\begin{proposition}\label{Cl_of_quotient}
  We have the isomorphisms: $\Pic(Y)=1$, $\Cl(Y)=G/[G,G]$ and
  $\cR(Y)=\bC[V]^{[G.G]}$ .
\end{proposition}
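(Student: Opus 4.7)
The plan is to establish the three statements in increasing order of subtlety. For $\Pic(Y)=1$ I would use the scalar $\bC^*$-action on $V$, which commutes with $G$ and so descends to a $\bC^*$-action on $Y$ whose only fixed point is the image of the origin; equivalently, $\bC[V]^G=\bigoplus_{d\geq 0}(\bC[V]^G)_d$ is positively graded with degree-zero part $\bC$. By a graded Nakayama argument every finitely generated projective module over such a ring is free, so line bundles on $Y$ are trivial.

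For $\Cl(Y)\cong G/[G,G]$ I would work through the quotient map $\pi:V\to Y$. Since $G$ contains no quasi-reflections, $\pi$ is \'etale in codimension one, and push-forward identifies Weil divisors on $Y$ with $G$-invariant Weil divisors on $V$. As $V\cong\bC^n$ has trivial class group, every such divisor is principal, say $\pdiv(f)$, and the $G$-invariance of $\pdiv(f)$ forces $g^*f=\chi_f(g)f$ for a character $\chi_f\in G^\vee$. The assignment $f\mapsto\chi_f$ descends to a well-defined group homomorphism from the semi-invariant rational functions modulo $\bC(V)^{*G}$ to $G^\vee$; surjectivity follows by averaging $\sum_{g\in G}\chi(g)^{-1}g^*(f_0)$ for a sufficiently general $f_0$. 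Combined with $\bC[V]^{*G}=\bC^*$, this yields $\Cl(Y)\cong G^\vee$, and since $G^\vee$ is abstractly isomorphic to $Ab(G)=G/[G,G]$ the claim follows.

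For $\cR(Y)=\bC[V]^{[G,G]}$ I would match graded pieces. For each $\chi\in G^\vee$, let $D_\chi$ be the divisor arising from the previous step and set $\bC[V]^G_\chi=\{f\in\bC[V]:g^*f=\chi(g)f\ \forall g\in G\}$, the space of $\chi$-semi-invariant polynomials. Pushing forward via $\pi$ identifies $\Gamma(Y,\cO_Y(D_\chi))$ with $\bC[V]^G_\chi$: a rational function on $V$ representing a section of $\cO_Y(D_\chi)$ must be $\chi$-semi-invariant, and regularity on $Y$ translates to polynomiality on $V$. Summing gives $\cR(Y)=\bigoplus_{\chi\in G^\vee}\bC[V]^G_\chi$. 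On the other hand, the residual $Ab(G)$-action on $\bC[V]^{[G,G]}$ has isotypic decomposition $\bigoplus_{\chi\in Ab(G)^\vee}\bC[V]^G_\chi$, because a $[G,G]$-invariant polynomial transforming under a character of $Ab(G)$ is exactly a $\chi$-semi-invariant of $G$. The two decompositions match summand by summand, and the multiplication inherited from $\bC(V)$ coincides with the one on $\bC[V]^{[G,G]}$ inherited from $\bC[V]$, yielding the desired ring isomorphism.

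The main obstacle will be the middle step: one must verify that the map $f\mapsto\chi_f$ is independent of how a given class in $\Cl(Y)$ is lifted to a principal divisor on $V$, and then thread the identification $\Cl(Y)\cong G^\vee$ through the grading conventions of Subsection \ref{CoxRing}. Since $\Cl(Y)$ may have torsion in our setting, the bookkeeping for the multiplication on the Cox ring requires care, as signaled in the preliminaries.
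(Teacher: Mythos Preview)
Your proposal is correct and in fact supplies far more detail than the paper does: the paper's proof of this proposition is simply a pair of citations (to Benson for $\Pic(Y)=1$ and to Arzhantsev--Ga\u{\i}fullin for $\Cl(Y)\cong Ab(G)$ and $\cR(Y)=\bC[V]^{[G,G]}$), with no argument given. What you have written is essentially the content of those references: the graded-Nakayama argument for triviality of the Picard group is the classical one, and your treatment of $\Cl(Y)$ via semi-invariants of the \'etale-in-codimension-one quotient map, followed by the isotypic decomposition of $\bC[V]^{[G,G]}$, is precisely the line taken in Arzhantsev--Ga\u{\i}fullin. So there is no genuine divergence of method to report; you have simply unpacked what the paper leaves to the literature. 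The caveat you raise about torsion in $\Cl(Y)$ and the multiplicative structure of the Cox ring is apt, and is exactly the point where one must invoke the more careful construction from \cite[Sect.~1.4]{CoxRings} that the paper already flags in Section~\ref{CoxRing}.
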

\begin{proof}
  The first part is classical nowadays, see e.g.~\cite{Benson}, the
  second part is in \cite{ArzhantsevGauifullin}.
\end{proof}

Recall that the abelianization $Ab(G)=G/[G,G]$ is isomorphic to the
group of characters $G^\vee=\Hom(G,\bC^*)$. The ring
$\cR(Y)=\bC[V]^{[G,G]}$, as $\bC[Y]$-module, is a direct sum of
reflexive rank one modules associated to characters of the group $G$,
see \cite[Thm.~1.3]{Stanley}. That is, the grading of $\cR(Y)$ is into
the eigenspaces associated to the characters of $G$:
\begin{equation}\label{Coxringdecomposition}
  \cR(Y)=\bC[V]^{[G,G]}=\bigoplus_{\mu\in G^\vee}\bC[V]^G_\mu\end{equation}
where $\bC[V]^G_\mu$ is a rank-1 reflexive $\bC[Y]=\bC[V]^G$ module on
which $G$ acts with character $\mu\in G^\vee$, for discussion
see \cite[Lemma 6.2]{DontenSurfaces}.

The following fact is in \cite[Sect.~2]{DontenSurfaces}.
\begin{lemma}\label{pic-resolution}
  Let $\varphi: X\ra Y$ be a resolution of a quotient
  singularity. Then $\Pic X=\Cl X$ is a free finitely generated
  abelian group.
\end{lemma}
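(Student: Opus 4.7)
Since $X$ is smooth, every Weil divisor on $X$ is Cartier, so $\Pic X = \Cl X$ holds automatically; the content of the lemma is the algebraic structure of this common group. My plan has two movements: finite generation, which is easy via excision, and freeness, which is the real obstacle.

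For finite generation I would let $E_1,\ldots,E_k$ denote the prime exceptional divisors of $\varphi$ and set $E = \bigcup_i E_i$. Since $G \subset GL(V)$ contains no quasi-reflections, $\Sing Y$ has codimension $\geq 2$ in $Y$ and $\varphi$ restricts to an isomorphism $X\setminus E \xrightarrow{\sim} Y\setminus \Sing Y$, so $\Cl(X\setminus E) = \Cl Y = Ab(G)$ by Proposition \ref{Cl_of_quotient}. The excision sequence
$$\bigoplus_{i=1}^{k}\bZ\,[E_i] \xrightarrow{\;\alpha\;} \Cl X \longrightarrow Ab(G) \longrightarrow 0$$
then exhibits $\Cl X$ as an extension of the finite group $Ab(G)$ by a quotient of $\bZ^k$, which is finitely generated. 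The next step is to verify that $\alpha$ is injective: if $\sum a_i E_i = \pdiv(f)$ for some $f\in\bC(X)^\times$, then $f$ is regular and nowhere vanishing on $X\setminus E$, so by normality of the affine variety $Y$ it extends to a unit on $Y$; but $\bC[Y]^\times \subseteq \bC[V]^\times = \bC^\times$, which forces $f$ to be constant and all $a_i$ to vanish. This upgrades excision to a short exact sequence
$$0 \longrightarrow \bZ^k \longrightarrow \Cl X \longrightarrow Ab(G) \longrightarrow 0.$$

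The hard part is showing this extension splits, equivalently, that $\Cl X$ has no torsion. My plan is a topological argument through the exponential sequence. Quotient singularities by finite groups without quasi-reflections are log terminal, hence have rational singularities, so $R^i\varphi_*\cO_X = 0$ for $i>0$; together with the affineness of $Y$ this forces $H^i(X,\cO_X) = 0$ for $i\geq 1$, and the analytic exponential sequence then identifies $\Pic X$ with $H^2(X,\bZ)$. On the other hand, $Y = V/G$ is contractible, since the $G$-action commutes with scalar multiplication and the radial retraction of $V$ onto the origin descends to $Y$; by Takayama's theorem on fundamental groups of resolutions of klt singularities, $\pi_1(X) = \pi_1(Y) = 1$, so $H_1(X,\bZ) = 0$ and the universal coefficient theorem forces $H^2(X,\bZ)$, and therefore $\Pic X$, to be torsion-free.
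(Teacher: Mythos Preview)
Your proof is correct and follows essentially the same route as the paper's. The paper's argument is terser: it invokes Grauert--Riemenschneider for $H^1(X,\cO_X)=0$, then the exponential sequence to identify $\Pic X$ with $H^2(X,\bZ)$, then \cite[Thm.~7.8]{Kollar} for $\pi_1(X)=1$, then universal coefficients---precisely your second movement. The only differences are that you cite Takayama rather than Koll\'ar for the fundamental-group step (same content, slightly different packaging: you pass through contractibility of $Y$ and the klt isomorphism $\pi_1(X)\cong\pi_1(Y)$, whereas the paper's reference gives $\pi_1(X)=1$ directly), and that you supply an explicit finite-generation argument via the excision sequence, which the paper leaves implicit in the identification $\Pic X\cong H^2(X,\bZ)$.
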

\begin{proof} By Grauert-Riemenschneider we know that $\H^1(X,\cO)=0$
  hence it is enough to prove that $\Pic X =\H^2(X,\bZ)$ has no finite
  torsion. However by \cite[Thm.~7.8]{Kollar} we know that the
  fundamental group is trivial so, by universal coefficients theorem,
  $\H^2(X,\bZ)$ has no torsion.
\end{proof}

We note that the exceptional set of a resolution $\varphi: X\ra Y=V/G$
is a divisor with $m$ irreducible components $E_i$ and thus $\Cl X$ is
a free abelian group of rank $m$. Moreover we have the following exact
sequence
\begin{equation}\label{class-sequence}
  0\longrightarrow \bigoplus_{i=1}^m \bZ[E_i]\longrightarrow \Cl X\iso
  \bZ^m \longrightarrow \Cl Y = Ab(G)\longrightarrow 0
\end{equation}
where the right arrow is the push-forward morphism $\varphi_*$.

Now we assume that $\dim V=2n$ and $G$ is a finite group in $Sp(V)$,
and there exists a symplectic resolution $\varphi: X\rightarrow
Y=V/G$.  That is $X$ admits a closed everywhere non-degenerated 2-form
which restricts to the invariant symplectic form defined on the smooth
locus of $V/G$.  Then the morphism $\varphi: X\rightarrow Y$ is
semi-small which means, in particular, that every exceptional divisor
of $\varphi$ in $X$ is mapped to a codimension 2 component of the
singular set associated to the fixed point locus of a symplectic
reflection. In fact $G$ has to be generated by symplectic reflections
and we have the following version of McKay correspondence by Kaledin,
see \cite{KaledinMcKay}.

\begin{theorem}\label{KaledinMcKay}
  There exist a natural basis of Borel-Moore homology
  $\H_{\bullet}^c(X,\bQ)$ whose elements are in bijection with the
  following objects:
  \begin{itemize}
  \item irreducible closed subvarieties $Z\subset X$ for which it
    holds $2\codim_XZ=\codim_Y\varphi(Z)$;
  \item conjugacy classes of elements $g$ in $G$.
  \end{itemize}
  Under this correspondence every conjugacy class $[g]$ of an element
  $g\in G$ is related to $Z^{[g]}\subseteq X$ such that
  $\varphi(Z^{[g]}) = [V^g]$, where $[V^g]$ denotes the image in $V/G$
  of the linear subspace $V^g$ of fixed points of $g$.
\end{theorem}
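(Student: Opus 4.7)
The plan is to combine the semi-smallness of $\varphi$ --- which holds for any symplectic resolution --- with the Beilinson--Bernstein--Deligne decomposition theorem, applied to a natural stratification of $Y=V/G$ by conjugacy classes in $G$. First I would stratify $Y$: for each conjugacy class $[g]\subset G$ the fixed subspace $V^g$ is linear, so its image $[V^g]=V^g/Z_G(g)$ is a locally closed irreducible subvariety of $V/G$, and as $[g]$ ranges over the conjugacy classes these images give a finite stratification of $Y$ by smooth locally closed pieces. This stratification is one of the two objects to be matched bijectively.

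Next I would exploit the symplectic form. A dimension count using the fact that exceptional fibres of $\varphi$ are isotropic for the symplectic form on $X$ shows that $\varphi$ is semi-small, i.e.~$2\dim\varphi^{-1}(y)\leq\codim_Y[V^g]$ for $y\in[V^g]$. Working locally along a transverse slice through a generic point of $[V^g]$ --- which is itself a symplectic quotient singularity of dimension $\codim_Y[V^g]$ resolved by the restriction of $\varphi$ --- I would argue that equality always holds, so every stratum $[V^g]$ is ``relevant'' in the sense of semi-small maps.

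Semi-smallness implies that $R\varphi_*\bQ_X[\dim X]$ is a pure perverse sheaf, and BBD then gives
$$R\varphi_*\bQ_X[\dim X]\;\iso\;\bigoplus_{[g]}IC\bigl(\overline{[V^g]},L_{[g]}\bigr)$$
for certain local systems $L_{[g]}$ on the strata. Taking hypercohomology and Verdier-dualizing converts this into a direct sum decomposition of $\H^c_\bullet(X,\bQ)$ indexed by pairs $([g],e)$, where $e$ runs over a basis of the stalk of $L_{[g]}$ at a generic point. These stalks are freely generated by the top-dimensional irreducible components of $\varphi^{-1}(y)$ for generic $y\in[V^g]$, and each such component is the generic fibre of a unique closed subvariety $Z\subset X$ satisfying $2\codim_XZ=\codim_Y\varphi(Z)$, yielding the cycle side of the bijection.

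The main obstacle is the last step: showing that the monodromy of each $L_{[g]}$ is trivial, so that a single irreducible $Z^{[g]}$ is attached to each conjugacy class. This reduces to analysing the local picture along a transverse slice, where the resolution resembles a McKay resolution of a symplectic quotient by a cyclic group essentially generated by $g$; one must verify that the residual action of $Z_G(g)/\langle g\rangle$ does not permute the top-dimensional components of the generic fibre non-trivially. Once this is established the classes $[Z^{[g]}]$ span $\H^c_\bullet(X,\bQ)$, and they constitute a basis because their number matches $\dim\H^c_\bullet(X,\bQ)$, which one extracts from the additivity of Euler characteristics over the stratification together with the identity $\chi(X)=\#\{\text{conjugacy classes of }G\}$.
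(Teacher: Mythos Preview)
The paper does not prove this theorem at all: it is stated as a known result and attributed to Kaledin, \cite{KaledinMcKay}, with the phrase ``we have the following version of McKay correspondence by Kaledin.'' There is nothing in the paper to compare your argument against.

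That said, your outline is in spirit the same as Kaledin's original approach: semi-smallness of the symplectic resolution plus the decomposition theorem, with the crux being the triviality of the local systems $L_{[g]}$. A few points deserve tightening. First, the stratification of $Y$ by the images $[V^g]$ is not literally a stratification as you describe it, since different conjugacy classes can give the same or nested fixed loci; one must pass to the open strata where the stabilizer type is constant. Second, your claim that every stratum is relevant (equality in the semi-smallness inequality) is true here but requires an argument specific to symplectic resolutions---it is not automatic from semi-smallness alone. Third, and most seriously, you correctly flag the monodromy triviality as the main obstacle but your sketch of its resolution is too vague to stand: the reduction to a transverse slice is right, but asserting that the slice ``resembles a McKay resolution of a symplectic quotient by a cyclic group essentially generated by $g$'' and that $Z_G(g)/\langle g\rangle$ acts without permuting components needs real work---this is precisely the technical heart of Kaledin's paper, handled there via a filtration argument and Poisson deformations. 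As written, your proposal identifies the right architecture but leaves the load-bearing step as an exercise.
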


Since the exceptional divisors $E_i$ of $\varphi$ are contracted to
codimension two set in $Y$ their configuration is modeled on
2-dimensional Du Val singularities. Namely, if $C_i\subset E_i$ is an
irreducible component of a general fiber of $\varphi_{|E_i}$ then
$C_i$ is a rational curve and the intersection matrix $(E_i\cdot C_j)$
is a direct sum of known Cartan-type matrices, see
\cite[Thm.~1.3]{Wierzba}, \cite[Thm.~4.1]{AW2014}. If $[L_i]\in
\NU(X)$, for $i=1,\dots, m$, is the basis dual (in terms of the
intersection) to that consisting of $[C_i]\in N_1(X)$, then $\Pic(X)$
is a sublattice of the lattice $\langle [L_1], \dots, [L_m]\rangle$
and we note that the left-hand side arrow in the sequence
\ref{class-sequence} can be written as follows $E_i \mapsto \sum_j
(E_i\cdot C_j)[L_j]$. Thus, the sequence \ref{class-sequence} can be
extended to the following diagram in which the left-hand side arrow in
the lower sequence is a map of lattices of the same rank given by the
intersection matrix $(E_i\cdot C_j)$
\begin{equation}\label{class-sequence-1}
\xymatrix{
0\ar[r]&\bigoplus_i \bZ[E_i]\ar[r]\ar@{=}[d]&
\Cl(X)\ar[r]\ar@{^{(}->}[d]&\Cl(V/G)\ar[r]\ar@{^{(}->}[d]&0\\
0\ar[r]&\bigoplus_i \bZ[E_i]\ar[r]&\bigoplus_i \bZ[L_i]\ar[r]
&Q\ar[r]&0
}
\end{equation}
Here $Q$ is the quotient of lattices $\bigoplus_i \bZ[E_i]
\hookrightarrow \bigoplus_i \bZ[L_i]$. The homomorphism $\Cl(V/G)
\hookrightarrow Q$ associates to a class of a Weil divisor $D$ on $V/G$
the class of $\sum_i (\varphi^{-1}_*D\cdot C_i)[L_i]$ in the quotient
group $Q$.

We conclude this sub-section by observing that the existence of the
symplectic form on a crepant resolution of a quotient singularity
follows by a known result of Beauville, \cite{BeauvilleInv}.

\begin{lemma}\label{sympl-resolution-form}
  Let $G<Sp(V)$ be a finite subgroup preserving a symplectic linear
  2-form on vector space $V$ of dimension $2n$. Suppose that $\varphi:
  X\rightarrow Y=V/G$ is a resolution with exceptional divisors $E_i$,
  where $i=1,\dots, m$. Assume that for every $i$ the image
  $\varphi(E_i)$ is of codimension 2 in $Y=V/G$ and in codimension 2
  the morphism $\varphi$ is a minimal resolution of Du Val
  singularities or $\varphi$ is crepant. Then $\varphi: X\rightarrow Y$
  is a symplectic resolution, that is, $X$ admits a symplectic form.
\end{lemma}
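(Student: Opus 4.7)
The plan is to invoke Beauville's result from \cite{BeauvilleInv} on symplectic singularities to get existence of a holomorphic 2-form on $X$ extending the pullback of the symplectic form on the smooth locus of $Y=V/G$, and then to use the hypothesis on $\varphi$ in codimension 2 to upgrade this to non-degeneracy everywhere.

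\medskip

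First I would set up the form. Since $G\subseteq Sp(V)$, the symplectic form $\omega$ on $V$ is $G$-invariant and therefore descends to a holomorphic symplectic form $\omega_Y$ on the smooth locus $Y^{reg}\subseteq Y=V/G$. Pulling back through the isomorphism $X\setminus\bigcup_i E_i \stackrel{\sim}{\to} Y^{reg}\cap\varphi(X\setminus\bigcup_i E_i)$ (the complement of the codimension $\geq 2$ image $\varphi(\bigcup_i E_i)$) yields a holomorphic symplectic form on the complement of the exceptional divisors of $X$. Beauville's theorem on symplectic singularities then guarantees that this form extends to a holomorphic 2-form $\widetilde\omega\in\H^0(X,\Omega_X^2)$ on the whole resolution; this is exactly the property that $Y$ has symplectic singularities in the sense of \cite{BeauvilleInv}, proved there for quotients by symplectic groups.

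\medskip

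The remaining task is to show that $\widetilde\omega$ is everywhere non-degenerate, equivalently that the top exterior power $\widetilde\omega^n\in\H^0(X,K_X)$ vanishes nowhere. Its zero locus, being the zero locus of a section of the line bundle $K_X$, is either empty or of pure codimension one. By construction $\widetilde\omega^n$ restricts to a trivialization of $K_X$ on $X\setminus\bigcup_i E_i$, so the zero locus is contained in $\bigcup_i E_i$. It therefore suffices to verify that $\widetilde\omega^n$ does not vanish identically along any $E_i$, i.e.\ to check non-degeneracy at a general point of each exceptional divisor. If $\varphi$ is crepant, then $K_X=\varphi^*K_Y=0$ and a nonzero section of the trivial bundle that is nonzero on a dense open set is nowhere zero, giving the conclusion immediately. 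If instead $\varphi$ is a minimal resolution of Du Val singularities in codimension two, then in an analytic neighbourhood of a general point of $E_i$ the map $\varphi$ is the product of a smooth map and a minimal resolution of a surface Du Val singularity, and such minimal resolutions are crepant. Hence $K_X$ is trivial in a neighbourhood of a general point of $E_i$ and the same argument applies locally: $\widetilde\omega^n$ is nonvanishing at the general point of each $E_i$, and so its zero divisor contains no codimension-one component and must be empty.

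\medskip

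The main obstacle is the extension step, but this is precisely what Beauville's theorem provides, so we do not need to do that work by hand. What requires care is only the bookkeeping that the crepancy (global, or along a general point of each $E_i$) implies pointwise non-degeneracy from generic non-degeneracy, using the purity of zero loci of sections of line bundles. With both inputs in place, $(X,\widetilde\omega)$ is the desired symplectic resolution.
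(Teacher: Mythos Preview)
Your proof is correct and follows essentially the same approach as the paper's own proof: invoke \cite[Prop.~2.4]{BeauvilleInv} to extend the $G$-invariant symplectic form to a regular closed 2-form $\widetilde\omega$ on all of $X$, then argue that $\widetilde\omega^{\wedge n}$ is nonvanishing along each $E_i$ using the crepancy hypothesis. You have spelled out the purity-of-zero-locus step and the local crepancy of minimal Du~Val resolutions more explicitly than the paper, but the argument is the same.
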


\begin{proof}
  By \cite[Prop.~2.4]{BeauvilleInv} the symplectic form $\omega$ on
  $V$ descends to the smooth locus of $V/G$ and extends to a regular
  closed 2-form $\widetilde{\omega}$ on every resolution $\varphi:
  X\rightarrow Y$. The top exterior power $\widetilde{\omega}^{\wedge
    n}$ does not vanish outside $E_i$'s and because of our assumption
  on $E_i$'s it is non-zero on $E_i$'s as well. Thus
  $\widetilde{\omega}$ is a symplectic form on $X$.
\end{proof}

\subsection{Cyclic group quotients and monomial
  valuations}\label{sect-cyclquot}
In this subsection we discuss a fundamental example of group action
and a blow-up of the resulting quotient.  Let $\epsilon_r=\exp(2\pi
i/r)\in\bC^*$ be the primitive $r$-th root of unity, $r>1$. We
consider the cyclic group $\langle\epsilon_r\rangle\subset\bC^*$. We
assume that the group $\langle\epsilon_r\rangle\iso\bZ_r$ acts
diagonally on the vector space $V$ of dimension $n$ with non-negative
weights $(a_1,\dots, a_n)$, that is $\epsilon_r(x_1,\dots,x_n)=
(\epsilon_r^{a_1}x_1, \dots, \epsilon_r^{a_n}x_n)$, with $0\leq
a_i<r$, and at least two $a_i$'s positive. Moreover, we assume that
the action of $\bZ_r$ is faithful which means that
$(a_1,\dots,a_n,r)=1$.  This action extends to the action of
$\bC^*\supset \langle\epsilon_r \rangle$ with the same weights: for
$t\in\bC^*$ we take $t(x_1,\dots,x_n) = (t^{a_1}x_1,\dots,
t^{a_n}x_n)$.

Let us describe this situation in toric terms. By $M$ let us denote
the lattice with the basis $(u_1,\dots,u_n)$ consisting of characters
of the standard torus $\bT_V$ of $V$ so that $\chi^{u_i}=x_i$. If
$N=M^*$ is the lattice of 1-parameter subgroups of the standard torus
then the $\bC^*$-action in question is defined by $v\in N$ such that
for every $i$ we have $\langle v,u_i\rangle =a_i$.  Equivalently, the
action of $\bC^*$ is associated to a grading $\deg_v$ on
$\bC[V]\subset\bC[M]$ such that $\deg_v(\chi^u)=\langle
v,u\rangle$. The composition of $\deg_v$ with the residue homomorphism
$\bZ\rightarrow\bZ_r$ determines $\bZ_r$-grading $[\deg_v]_r$ on
$\bC[V]$ associated to the action of $\langle\epsilon_r\rangle$. By
$[\deg_v]_r$ we will denote both, the grading on $\bC[V]=\bC[M \cap
\sigma^+]$ and the associated group homomorphism $M\rightarrow\bZ_r$.

Following \cite{ReidMcKay}, \cite{ItoReid},
\cite[Sect.~2]{KaledinMcKay} we state this definition.

\begin{definition}\label{def-monomial-valuation}
  The monomial valuation associated to the cyclic group action
  described above is $\nu_v: \bC(V)\rightarrow \bZ\cup\{\infty\}$ such
  that for $f =\sum_j c_j\chi^{m_j}\ne 0$ it holds
  $\nu_v(f)=\min\{\langle v,m_j\rangle: c_j\ne 0\}$ and moreover
  $\nu_v(f_1/f_2) = \nu_v(f_1)-\nu_v(f_2)$.
\end{definition}

\begin{example}\label{ex-toric-blow-up}
  In the situation introduced above we define a lattice
  $N_{v/r}=N+(v/r)\cdot\bZ$ where the sum is taken in $N_\bQ$. Its
  dual $M_{v/r}\subset M$ consists of characters on which $v/r$
  assumes integral values. In fact, $M_{v/r}$ is the kernel of
  $[\deg_v]_r$. Thus $\bC[M_{v/r} \cap \sigma^+_M] \subset \bC[V]$ is
  the ring of invariants of the action of $\langle \epsilon_r \rangle$
  on $V$, or $Y_{v/r}:=\Spec \bC[M_{v/r} \cap \sigma^+_M]$ is the
  quotient $V/\langle \epsilon_r\rangle$. Equivalently, $Y_{v/r}$ is
  an affine toric variety associated to the cone $\sigma^+_N$ and the
  lattice $N_{v/r}$. We define a toric variety $X_{v/r}$ whose fan is
  defined by taking a ray in $N_{v/r}$ which is generated by $v/r$ and
  sub-dividing the cone $\sigma^+_N$ into a simplicial fan. The
  induced morphism $\varphi: X_{v/r}\rightarrow Y_{v/r}$ is proper and
  birational and its exceptional set is an irreducible divisor
  $E_{v/r}$ which maps to $V^{\langle\epsilon_r\rangle}$ as a weighted
  projective space bundle, with fiber $\bP(a_i: a_i>0)$.

  Let us consider rank $n+1$ lattice $\widehat{M}_{v/r}$ which is a
  sublattice of $M\times\bZ$ generated by $(u_i,a_i)$, for $i=1,\dots,
  n$, and $(0,-r)$. By $\widehat{\sigma}^+$ we denote the cone in
  $M_\bR\times\bR$ spanned by these generators and by $p_2:
  \widehat{M}_{v/r} \rightarrow \bZ$ we denote the projection onto the
  last coordinate. Let us note that the kernel of $p_2$ coincides with
  $M_{v/r}$. If fact, if $p_1: M\times\bZ\rightarrow M$ is the
  projection to the first factor then on $\widehat{M}_{v/r}$ the
  composition $[\deg_v]_r\circ p_1$ coincides with $p_2$ composed with
  the residue homomorphism $\bZ\rightarrow \bZ_r$. By $\psi:
  \bC[\widehat{M}_{v/r} \cap \widehat{\sigma}^+] \rightarrow
  \bC[M\cap\sigma^+_M]$ let us denote the homomorphism of polynomial
  rings induced by the projection $p_1$, so that
  $\psi(\chi^{(u_i,a_i)}) = \chi^{u_i}$ and $\psi(\chi^{(0,-r)})=1$.
\end{example}

\begin{proposition}\label{cyclic-group-blow-up}
  In the above situation the following holds:
  \begin{enumerate}
  \item $\Cl(Y_{v/r})=\bZ_r$ and $\cR(Y_{v/r})=\bC[M\cap\sigma^+_M]$
    with grading $[\deg_v]_r$.
  \item $\Cl(X_{v/r})=\bZ$ and $\cR(X_{v/r})=\bC[\widehat{M}_{v/r}
    \cap \widehat{\sigma}^+]$ with grading $\cR(X_{v/r})=
    \bigoplus_{d\in\bZ} \cR(X_{v/r})_d$ associated to the projection
    $p_2$.
  \item
    $\cR(Y_{v/r})_0=\cR(X_{v/r})_0=\bC[V]^{\langle\epsilon_r\rangle}$
    and if $\cR(X_{v/r})^+= \bigoplus_{d\geq 0} \cR(X_{v/r})_d$ then
    $X_{v/r}=\Proj\cR(X_{v/r})^+    $ and $\cO_{X_{v/r}}(-E_{v/r})=\cO_{\Proj(\cR(X_{v/r}))^+}(r)$
  \item The valuation $\nu_v$ restricted to $\bC(X_{v/r}) =
    \bC(Y_{v/r})\subset\bC(V)$ coincides with $r\cdot\nu_{E}$, where
    $\nu_{E}$ is the divisorial valuation centered at $E_{v/r}$.
  \end{enumerate}
\end{proposition}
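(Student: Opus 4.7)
The plan is to deduce all four items from standard toric geometry, with the key technical content being the polynomial Cox ring description of $X_{v/r}$ in (2). For (1), observe that $Y_{v/r}$ is the affine toric variety corresponding to the simplicial cone $\sigma^+_N$ in the lattice $N_{v/r}=N+(v/r)\bZ$, so its class group equals $N_{v/r}/N$; the assumption $\gcd(a_1,\ldots,a_n,r)=1$ ensures that the class of $v/r$ has order exactly $r$, giving $\Cl(Y_{v/r})=\bZ_r$. The Cox ring assertion follows from Proposition \ref{Cl_of_quotient}: the cyclic group $\langle\epsilon_r\rangle$ has trivial commutator, so $\cR(Y_{v/r})=\bC[V]=\bC[M\cap\sigma^+_M]$, and the grading by $G^\vee=\bZ_r$ is by characters of the action, which by construction is $[\deg_v]_r$.

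For (2), first verify that $v/r$ is primitive in $N_{v/r}$ — a direct computation using $\gcd(a_1,\ldots,a_n,r)=1$ — so the fan of $X_{v/r}$ has rays $e_1,\ldots,e_n,v/r$, and each maximal cone, being spanned by a $\bZ$-basis of $N_{v/r}$, makes $X_{v/r}$ smooth. The crux is identifying $\cR(X_{v/r})$ with $\bC[\widehat{M}_{v/r}\cap\widehat{\sigma}^+]$: the vectors $(u_1,a_1),\ldots,(u_n,a_n),(0,-r)$ span $\widehat{M}_{v/r}$ by definition and are $\bZ$-linearly independent by rank, so they form a $\bZ$-basis and $\widehat{\sigma}^+$ is the positive orthant with respect to it; hence $\bC[\widehat{M}_{v/r}\cap\widehat{\sigma}^+]$ is polynomial in $z_i=\chi^{(u_i,a_i)}$ and $y=\chi^{(0,-r)}$. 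By the standard description, the Cox ring of a smooth toric variety whose rays span the lattice is polynomial with one variable per ray; a direct match identifies $z_i$ with the strict transform $D_i$ of $\{x_i=0\}$ and $y$ with $E_{v/r}$. Computing the class group from the toric relations $\sum_i u_i(u)[D_i]+\langle v/r,u\rangle[E_{v/r}]=0$ for $u\in M_{v/r}$ yields $\Cl(X_{v/r})=\bZ$ with $[D_i]=a_i$ and $[E_{v/r}]=-r$, which matches the $p_2$-grading exactly.

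For (3), an element $(u,k)\in\widehat{M}_{v/r}$ satisfies $k\equiv\langle v,u\rangle\pmod r$; hence the degree-zero part (where $k=0$) consists of $(u,0)$ with $u\in M_{v/r}$, so $\cR(X_{v/r})_0=\bC[M_{v/r}\cap\sigma^+_M]=\bC[V]^{\langle\epsilon_r\rangle}$, agreeing with $\cR(Y_{v/r})_0$. The identity $X_{v/r}=\Proj\cR(X_{v/r})^+$ is the standard GIT description of the $\bC^*$-quotient associated to a positive character, and since $[-E_{v/r}]=r$ under $\Cl(X_{v/r})\iso\bZ$, the twist $\cO_{X_{v/r}}(-E_{v/r})$ corresponds to $\cO_{\Proj}(r)$. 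For (4), both $\nu_v$ and $r\cdot\nu_{E_{v/r}}$ are monomial valuations on the fraction field $\bC(Y_{v/r})=\mathrm{Frac}\,\bC[M_{v/r}]$ which agree on every monomial $\chi^u$ since $\langle v,u\rangle=r\langle v/r,u\rangle$; two monomial valuations coinciding on monomials are equal. The main obstacle is the careful bookkeeping in (2): verifying that the semigroup $\widehat{M}_{v/r}\cap\widehat{\sigma}^+$ really is the positive orthant of a basis, and aligning the abstract $\Cl(X_{v/r})$-grading with the explicit projection $p_2$.
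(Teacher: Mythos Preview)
Your approach is essentially the paper's: both deduce everything from standard toric Cox-ring theory. The paper compresses (2) into a single observation, namely that the lattice map $\widehat{N}_{v/r}\to N_{v/r}$ dual to the inclusion $M_{v/r}\hookrightarrow\widehat{M}_{v/r}$ sends the basis dual to $(u_1,a_1),\dots,(u_n,a_n),(0,-r)$ onto the ray generators $e_1,\dots,e_n,v/r$ of the fan of $X_{v/r}$; this identifies $\Spec\bC[\widehat{M}_{v/r}\cap\widehat{\sigma}^+]$ with the Cox cover of $X_{v/r}$ at one stroke, after which (1)--(4) follow from the usual toric dictionary.

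One claim in your write-up is wrong: $X_{v/r}$ is \emph{not} smooth in general. Take $n=2$, $r=4$, $(a_1,a_2)=(1,2)$; then the cone spanned by $e_1$ and $v/r=(1/4,1/2)$ is not unimodular in $N_{v/r}$, and indeed the paper itself notes that the exceptional fiber is the \emph{weighted} projective space $\bP(a_i:a_i>0)$. The error is harmless for your argument, since the Cox ring of any toric variety without torus factors is polynomial with one variable per ray---simply drop the word ``smooth'' and cite the general statement. A smaller point: invoking Proposition~\ref{Cl_of_quotient} for part~(1) requires the cyclic action to have no quasi-reflections, which is not among the hypotheses here; it is cleaner to read off $\cR(Y_{v/r})$ and its $[\deg_v]_r$-grading directly from the toric exact sequence $0\to M_{v/r}\to M\to\bZ_r\to 0$, exactly as you already did for $\Cl(Y_{v/r})$.
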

\begin{proof}
  The proof uses toric geometry. Let $\widehat{N}_{v/r}$ be a lattice
  dual to $\widehat{M}_{v/r}\subset M\times\bZ$ with the basis
  $v_0,v_1,\dots,v_n$ such that for $\langle v_i,(u_j,a_j)\rangle=1$
  for $1\leq i=j\leq n$, and $\langle v_0,(0,-r)\rangle=1$ and the
  other products are zero. We check that the homomorphism dual to the
  inclusion $M_{v/r}\hookrightarrow\widehat{M}_{v/r}$ is
  $\widehat{N}_{v/r}\rightarrow N_{v/r}$ where $v_i$'s are send to the
  elements of the basis of $N$ and $v_0\mapsto v/r$. Indeed, if
  $u=b_0(0,-r)+\sum_1^n b_i(u_i,a_i)$ is in $M_{v/r}\subset
  \widehat{M}_{v/r}$ then $b_0=\sum_1^n b_i(a_i/r)$ hence the claim
  follows. Now we can use standard arguments in toric geometry, see
  e.g.~\cite{CoxLittleSchenck}.
\end{proof}

\begin{corollary}\label{toric-push-forward}
  Suppose that the situation is as introduced above. Then the
  projection induced homomorphism
  \begin{equation}\label{eq-toric-push-forward-1}
    \psi: \bC[\widehat{M}_{v/r} \cap \widehat{\sigma}^+] =
    \cR(X_{v/d}) \rightarrow \bC[M\cap\sigma^+_M]=\cR(Y_{v/r})
  \end{equation}
  is a homomorphism of graded $\bC[V]^{\langle\epsilon_r\rangle}$
  algebras.  More precisely, for $d\in\bZ$ we have the induced
  injective homomorphism of $d$-th graded pieces $(\psi)_d:
  \cR(X_{v/r})_d \hookrightarrow \cR(Y_{v/r})_{[d]_r}$, as modules
  over $\bC[V]^{\langle\epsilon_r\rangle}=\cR(X_{v/r})_0=
  \cR(Y_{v/r})_0$, and the following holds
  \begin{equation}\label{eq-toric-push-forward-2}
    \begin{array}{ll}
      \psi\left(\cR(X_{v/r})_d\right)& =\left\{f\in\cR(Y_{v/r})_{[d]_r}:
        \nu_v(f)\geq d\right\}
    \end{array}
  \end{equation}
\end{corollary}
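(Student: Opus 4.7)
The plan is to unpack the toric picture of Example~\ref{ex-toric-blow-up} by direct lattice bookkeeping, reducing everything to the behaviour of the projection $p_1$ on the lattice $\widehat{M}_{v/r}\cap\widehat{\sigma}^+$. First I would check that $\psi$ is a well-defined graded $\bC[V]^{\langle\epsilon_r\rangle}$-algebra homomorphism: $p_1$ sends the generators $(u_i,a_i)$ to $u_i$ and $(0,-r)$ to $0$, so it maps $\widehat{M}_{v/r}\cap\widehat{\sigma}^+$ into $M\cap\sigma^+_M$; the identity $[\deg_v]_r\circ p_1\equiv p_2\pmod{r}$ observed in Example~\ref{ex-toric-blow-up} forces the $p_2$-degree-$d$ piece to land in the $[d]_r$-piece of $\cR(Y_{v/r})$, and the fact that $p_2^{-1}(0)\cap\widehat{M}_{v/r}=M_{v/r}\times\{0\}$ shows that $\psi$ restricts to the identity on $\bC[V]^{\langle\epsilon_r\rangle}$. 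Injectivity of $(\psi)_d$ is then immediate since for fixed second coordinate $d$ the lattice points of $\widehat{M}_{v/r}\cap\widehat{\sigma}^+$ are in bijection with their first coordinates, so the induced map on monomial bases is injective.

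The main content is the image identification~(\ref{eq-toric-push-forward-2}). The rank $n+1$ of $\widehat{M}_{v/r}$ together with the $\bQ$-linear independence of the $n+1$ given generators forces $\{(u_i,a_i),(0,-r)\}$ to be a $\bZ$-basis. Consequently any lattice point $(m,d)\in\widehat{M}_{v/r}$ admits a unique integral expansion $(m,d)=b_0(0,-r)+\sum_ib_i(u_i,a_i)$ with $m=\sum_ib_iu_i$ and $b_0 r=\langle v,m\rangle-d$, and the membership $(m,d)\in\widehat{\sigma}^+$ translates into $m\in\sigma^+_M$ (all $b_i\geq 0$) together with $\langle v,m\rangle\geq d$ ($b_0\geq 0$). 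For the inclusion $\supseteq$, given $f=\sum_jc_j\chi^{m_j}\in\cR(Y_{v/r})_{[d]_r}$ with $\nu_v(f)\geq d$, the grading hypothesis $[\deg_v]_r(m_j)=[d]_r$ makes $(\langle v,m_j\rangle-d)/r$ an integer for each $j$, while $\langle v,m_j\rangle\geq\nu_v(f)\geq d$ makes it non-negative; so each $(m_j,d)$ lies in $\widehat{M}_{v/r}\cap\widehat{\sigma}^+$ and $f=\psi\bigl(\sum_jc_j\chi^{(m_j,d)}\bigr)$ exhibits $f$ as a value of $\psi$. The inclusion $\subseteq$ runs the same arithmetic backwards: from $(m,d)\in\widehat{\sigma}^+$ one reads $\langle v,m\rangle=d+b_0 r\geq d$ for each term of $g\in\cR(X_{v/r})_d$, which gives $\nu_v(\psi(g))\geq d$ after taking the minimum over non-zero terms.

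The only genuinely delicate point is asserting that $\{(u_i,a_i),(0,-r)\}$ is a $\bZ$-basis rather than merely a generating set, but this is forced by the rank count. Apart from that, the argument is a transparent arithmetic packaging of the toric data already on the table, and I would not expect any substantial conceptual obstacle; indeed, this is precisely the viewpoint that makes the valuation $\nu_v$ transparent on the Cox side, and it is the mechanism that will later transport the statement to the more involved non-abelian setting of Theorem~\ref{main_theorem}.
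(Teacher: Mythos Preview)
Your argument is correct and is precisely the direct lattice unpacking that the paper has in mind: the paper does not give a separate proof of this corollary, treating it as an immediate consequence of Proposition~\ref{cyclic-group-blow-up} via ``standard arguments in toric geometry,'' and your computation with the basis $\{(u_i,a_i),(0,-r)\}$ of $\widehat{M}_{v/r}$ is exactly how one cashes that phrase out. The key identity $b_0 r=\langle v,m\rangle-d$ and the translation of $\widehat{\sigma}^+$-membership into $m\in\sigma^+_M$ together with $\langle v,m\rangle\geq d$ is the whole content, and you have it right.
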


%-------------------------------------------------------------------%

\section{The total coordinate ring of symplectic resolutions of a
  quotient}\label{total}

\subsection{The push-forward map}\label{sect-triviality}
We begin this section by discussing a somewhat more general situation,
than what is needed to tackle the problem of our interest.  Let
$\varphi: X\ra Y$ be a projective birational morphism of normal
$\bQ$-factorial varieties which satisfy assumptions formulated at the
beginning of Section \ref{CoxRing}. By $\bQ$-factoriality of $Y$ we know
that the exceptional set of the morphism $\varphi$ is a Weil divisor
with components denoted by $E_i$. The push-forward map of codimension
one cycles $\varphi_*: \Cl(X)\ra\Cl(Y)$ is surjective and its kernel
is generated by the classes of $E_i$'s, c.f.~\ref{class-sequence}.

Moreover, $\varphi$ determines the morphism of the respective total
coordinate rings which we will denote by $\varphi_*$ as well. Namely,
for a reflexive sheaf $\cL=\cO_X(D)$ its reflexive push-forward
$\varphi_*\cL^{\vee\vee}$ is isomorphic to $\cO_Y(\varphi_*D)$. Thus,
pushing down the sections determines the injective homomorphism of
spaces
\begin{equation}\label{triviality-1}
\Gamma(X,\cO_X(D))\ra\Gamma(Y,\cO_Y(\varphi_*(D)))
\end{equation}
associated to the inclusion
\begin{equation}\label{triviality-2}
\{f\in\bC(X)^*:\pdiv(f)+D\geq 0\}\hookrightarrow
\{f\in\bC(Y)^*:\pdiv(f)+\varphi_*(D)\geq 0\}
\end{equation}
This yields the homomorphism of graded rings $\varphi_*:
\cR(X)\ra\cR(Y)$. Note that we use the fact that
$\bC(X)=\bC(Y)$. Well-definedness of the homomorphism $\varphi_*$
follows from the construction in \cite[Sect.~1.4]{CoxRings}, see
\cite{HausenKeicherLaface}. For example, in case when $\Cl(X)$ is
torsion-free, which is the case of our primary interest, c.f.~Lemma
\ref{pic-resolution}, we choose divisors $D_i$ on $X$ whose classes
generate $\Cl(X)$, and use the construction explained in Section
\ref{CoxRing}, see also \cite[1.4.1.1]{CoxRings} to define
$\cR(X)$. Subsequently we use divisors $\varphi_*(D_i)$ and
construction \cite[1.4.2.1]{CoxRings} to define $\cR(Y)$. We summarize
this short general introduction by stating a result from a paper by
Hausen, Keicher and Laface \cite[Prop.~2.2]{HausenKeicherLaface} to
which we refer the reader for details.

\begin{proposition}\label{push-forward-Cox-rings}
  Let $\varphi: X\ra Y$ be a proper birational morphisms of varieties
  which satisfy assumptions stated at the beginning of Section
  \ref{CoxRing}; the exceptional set of $\varphi$ equal to
  $\bigcup_{i=1}^r E_i$, where $E_i$'s are prime divisor. Then there
  exists a canonical surjective homomorphism $\varphi_*:
  \cR(X)\ra\cR(Y)$ which agrees with the homomorphism of gradings
  $\varphi_*:\Cl(X)\ra\Cl(Y)$. The kernel of $\varphi_*$ contains
  elements $1-f_{E_i}$, where $f_{E_i} \in \Gamma(X,\cO(E_i))$ is a
  section defining $E_i$. Moreover, if $\cR(X)$ is finitely generated
  then in fact $\ker\varphi_*=(1-f_{E_i}: i=1,\dots, r)$.
\end{proposition}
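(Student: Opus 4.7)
The plan is to build $\varphi_*$ graded-piece by graded-piece using $\bC(X)=\bC(Y)$, then verify it is a surjective, grading-compatible ring homomorphism, and finally identify its kernel. For the construction, given $[D]\in\Cl(X)$ and a section $s\in\Gamma(X,\cO_X(D))$ realized as $f\in\bC(X)^*$ with $\pdiv_X(f)+D\geq 0$, effectiveness of Weil divisors is preserved under pushforward: $\pdiv_Y(f)+\varphi_*D\geq 0$. This yields the inclusion $\Gamma(X,\cO_X(D))\hookrightarrow\Gamma(Y,\cO_Y(\varphi_*D))$ and, summing over the grading, an additive map $\varphi_*:\cR(X)\to\cR(Y)$ compatible with $\varphi_*:\Cl(X)\to\Cl(Y)$. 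Multiplicativity is automatic because both products are inherited from the common field $\bC(X)=\bC(Y)$; independence from the choices of representative divisors used to embed the graded pieces into $\bC(X)$ (cf.~Section~\ref{CoxRing}) is standard, see \cite[Ch.~1]{CoxRings}.

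For surjectivity, I would take $g\in\Gamma(Y,\cO_Y(D'))$ represented by $f\in\bC(Y)^*$, and use the identity $\pdiv_X(f)=\varphi^{-1}_*\pdiv_Y(f)+\sum_i\nu_{E_i}(f)E_i$ to observe that the divisor $D:=\varphi^{-1}_*D'-\sum_i\nu_{E_i}(f)E_i$ on $X$ satisfies $\varphi_*[D]=[D']$ and $\pdiv_X(f)+D=\varphi^{-1}_*(\pdiv_Y(f)+D')\geq 0$, so $f$ determines a section in $\Gamma(X,\cO_X(D))$ mapping to $g$. That $1-f_{E_i}\in\ker\varphi_*$ is then immediate: the section $f_{E_i}\in\Gamma(X,\cO_X(E_i))$ corresponds to the rational function $1\in\bC(X)$ since $\pdiv(1)+E_i=E_i\geq 0$, and because $\varphi_*E_i=0$ it pushes forward to the constant section $1\in\Gamma(Y,\cO_Y)=\cR(Y)_0$, agreeing with the image of $1\in\cR(X)_0$.

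The main obstacle is the converse inclusion $\ker\varphi_*\subseteq(1-f_{E_1},\dots,1-f_{E_r})$ under the finite generation hypothesis. My plan is to form the quotient $S:=\cR(X)/(1-f_{E_i}:i=1,\dots,r)$; since imposing $f_{E_i}=1$ collapses the sub-grading indexed by $\bigoplus_i\bZ[E_i]\subset\Cl(X)$, the exact sequence~\ref{class-sequence} endows $S$ with a natural $\Cl(Y)$-grading, and the induced surjection $\bar\varphi_*:S\to\cR(Y)$ needs to be shown to be an isomorphism. Geometrically this amounts to identifying the closed immersion $\Spec\cR(Y)\hookrightarrow\Spec\cR(X)$ determined by $\varphi_*$ with the closed subscheme $V(1-f_{E_i})$, the key tool being the action of $\bT_{\Cl(X)}$ on $\Spec\cR(X)$: its subtorus $\bT_{\bigoplus\bZ[E_i]}$ acts freely on the locus $\{f_{E_1}\cdots f_{E_r}\neq 0\}$ and intersects $V(1-f_{E_i})$ transversely in a single point of each orbit, yielding the scheme-theoretic identification. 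The full technical execution, including the care needed when $\Cl(X)$ has torsion or when $[E_i]$ fail to form a basis of $\ker\varphi_*$, is the content of \cite[Prop.~2.2]{HausenKeicherLaface}, whose argument I would follow.
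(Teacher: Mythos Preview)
Your proposal is correct and matches the paper's treatment: the paper does not prove this proposition but merely sketches the construction of $\varphi_*$ via the inclusions $\Gamma(X,\cO_X(D))\hookrightarrow\Gamma(Y,\cO_Y(\varphi_*D))$ (exactly as you do) and then cites \cite[Prop.~2.2]{HausenKeicherLaface} for the full statement, including the identification of the kernel. You in fact supply more detail than the paper---an explicit surjectivity argument and a verification that $1-f_{E_i}\in\ker\varphi_*$---before deferring to the same reference for the hard direction.
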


The case of a blow-up of a cyclic singularity discussed in Section
\ref{sect-cyclquot} is a particular example of this situation. In
particular, the homomorphism $\psi$ introduced there, is the
push-forward $\varphi_*$ of the respective Cox rings, c.f.~Corollary
\ref{toric-push-forward}.

\begin{example}\label{ex-A1resolution}
  Let us consider the case of a resolution of a surface $A_1$
  singularity. That is $Y=V/\bZ_2$ where $V=\bC^2$ and the non-trivial
  element of $\bZ_2$ acts as $-I$ matrix. The $\varphi:X\ra Y$ is the
  blow-down of a $(-2)$-curve. Clearly the situation is toric and the
  elements of both $\cR(Y)=\bC[V]$ and of $\cR(X)$ are linear
  combinations of monomials which can be visualized as points in a
  lattice of rank 2 or 3, respectively.

  In Figure \ref{caseA1} we present $\varphi_*(\cR(X))_d$ as a
  submodule of $\cR(Y)_{[d]_2}\subset\bC[V]$. The monomials in
  $\bC[V]$ are integral lattice points in the positive quadrant on the
  plane which is indicated by the solid line segments; the dot in the
  left-lower corner is $(0,0)$. These monomials which are in
  $\varphi_*(\cR(X)_d)$ are denoted by $\bullet$ while those which are
  not in $\varphi_*(\cR(X)_d)$ are denoted by $\circ$.  The skewed
  dotted line indicates where the monomial valuation $\nu_{(1,1)}$
  assumes value $d$, c.f.~\ref{eq-toric-push-forward-2}. For $d\leq 1$
  we have $\varphi_*(\cR(X)_d)=\cR(Y)_{[d]_2}$.

\begin{figure}[h]\caption{Resolution of $A_1$ singularity; the image $\varphi_*(\cR(X)_d)$ in
      $\cR(Y)_{[d]_2}$}\label{caseA1}
$$
\begin{array}{cccccccc}
&d\leq 1&&d=2,\ 3&&d=4,\ 5&&d=6,\ 7\\
d\ {\rm even}\\
&
\begin{xy}<10pt,0pt>:
(0,5)*={\circ},(1,5)*={\bullet},(2,5)*={\circ},(3,5)*={\bullet},(4,5)*={\circ},(5,5)*={\bullet},
(0,4)*={\bullet},(1,4)*={\circ},(2,4)*={\bullet},(3,4)*={\circ},(4,4)*={\bullet},(5,4)*={\circ},
(0,3)*={\circ},(1,3)*={\bullet},(2,3)*={\circ},(3,3)*={\bullet},(4,3)*={\circ},(5,3)*={\bullet},
(0,2)*={\bullet},(1,2)*={\circ},(2,2)*={\bullet},(3,2)*={\circ},(4,2)*={\bullet},(5,2)*={\circ},
(0,1)*={\circ},(1,1)*={\bullet},(2,1)*={\circ},(3,1)*={\bullet},(4,1)*={\circ},(5,1)*={\bullet},
(0,0)*={\bullet},(1,0)*={\circ},(2,0)*={\bullet},(3,0)*={\circ},(4,0)*={\bullet},(5,0)*={\circ},
(0,0);(0,6) **@{-}, (0,0);(6,0) **@{-}
\end{xy}&&
\begin{xy}<10pt,0pt>:
(0,5)*={\circ},(1,5)*={\bullet},(2,5)*={\circ},(3,5)*={\bullet},(4,5)*={\circ},(5,5)*={\bullet},
(0,4)*={\bullet},(1,4)*={\circ},(2,4)*={\bullet},(3,4)*={\circ},(4,4)*={\bullet},(5,4)*={\circ},
(0,3)*={\circ},(1,3)*={\bullet},(2,3)*={\circ},(3,3)*={\bullet},(4,3)*={\circ},(5,3)*={\bullet},
(0,2)*={\bullet},(1,2)*={\circ},(2,2)*={\bullet},(3,2)*={\circ},(4,2)*={\bullet},(5,2)*={\circ},
(0,1)*={\circ},(1,1)*={\bullet},(2,1)*={\circ},(3,1)*={\bullet},(4,1)*={\circ},(5,1)*={\bullet},
(0,0)*={\circ},(1,0)*={\circ},(2,0)*={\bullet},(3,0)*={\circ},(4,0)*={\bullet},(5,0)*={\circ},
(0,0);(0,6) **@{-}, (0,0);(6,0) **@{-}, (-1,3);(3,-1) **@{.}
\end{xy}&&
\begin{xy}<10pt,0pt>:
(0,5)*={\circ},(1,5)*={\bullet},(2,5)*={\circ},(3,5)*={\bullet},(4,5)*={\circ},(5,5)*={\bullet},
(0,4)*={\bullet},(1,4)*={\circ},(2,4)*={\bullet},(3,4)*={\circ},(4,4)*={\bullet},(5,4)*={\circ},
(0,3)*={\circ},(1,3)*={\bullet},(2,3)*={\circ},(3,3)*={\bullet},(4,3)*={\circ},(5,3)*={\bullet},
(0,2)*={\circ},(1,2)*={\circ},(2,2)*={\bullet},(3,2)*={\circ},(4,2)*={\bullet},(5,2)*={\circ},
(0,1)*={\circ},(1,1)*={\circ},(2,1)*={\circ},(3,1)*={\bullet},(4,1)*={\circ},(5,1)*={\bullet},
(0,0)*={\circ},(1,0)*={\circ},(2,0)*={\circ},(3,0)*={\circ},(4,0)*={\bullet},(5,0)*={\circ},
(0,0);(0,6) **@{-}, (0,0);(6,0) **@{-}, (-1,5);(5,-1) **@{.}
\end{xy}&&
\begin{xy}<10pt,0pt>:
(0,5)*={\circ},(1,5)*={\bullet},(2,5)*={\circ},(3,5)*={\bullet},(4,5)*={\circ},(5,5)*={\bullet},
(0,4)*={\circ},(1,4)*={\circ},(2,4)*={\bullet},(3,4)*={\circ},(4,4)*={\bullet},(5,4)*={\circ},
(0,3)*={\circ},(1,3)*={\circ},(2,3)*={\circ},(3,3)*={\bullet},(4,3)*={\circ},(5,3)*={\bullet},
(0,2)*={\circ},(1,2)*={\circ},(2,2)*={\circ},(3,2)*={\circ},(4,2)*={\bullet},(5,2)*={\circ},
(0,1)*={\circ},(1,1)*={\circ},(2,1)*={\circ},(3,1)*={\circ},(4,1)*={\circ},(5,1)*={\bullet},
(0,0)*={\circ},(1,0)*={\circ},(2,0)*={\circ},(3,0)*={\circ},(4,0)*={\circ},(5,0)*={\circ},
(0,0);(0,6) **@{-}, (0,0);(6,0) **@{-}, (0,6);(6,0) **@{.}
\end{xy}
\\
d\ {\rm odd}\\
&
\begin{xy}<10pt,0pt>:
(0,5)*={\bullet},(1,5)*={\circ},(2,5)*={\bullet},(3,5)*={\circ},(4,5)*={\bullet},(5,5)*={\circ},
(0,4)*={\circ},(1,4)*={\bullet},(2,4)*={\circ},(3,4)*={\bullet},(4,4)*={\circ},(5,4)*={\bullet},
(0,3)*={\bullet},(1,3)*={\circ},(2,3)*={\bullet},(3,3)*={\circ},(4,3)*={\bullet},(5,3)*={\circ},
(0,2)*={\circ},(1,2)*={\bullet},(2,2)*={\circ},(3,2)*={\bullet},(4,2)*={\circ},(5,2)*={\bullet},
(0,1)*={\bullet},(1,1)*={\circ},(2,1)*={\bullet},(3,1)*={\circ},(4,1)*={\bullet},(5,1)*={\circ},
(0,0)*={\circ},(1,0)*={\bullet},(2,0)*={\circ},(3,0)*={\bullet},(4,0)*={\circ},(5,0)*={\bullet},
(0,0);(0,6) **@{-}, (0,0);(6,0) **@{-}
\end{xy}
&&
\begin{xy}<10pt,0pt>:
(0,5)*={\bullet},(1,5)*={\circ},(2,5)*={\bullet},(3,5)*={\circ},(4,5)*={\bullet},(5,5)*={\circ},
(0,4)*={\circ},(1,4)*={\bullet},(2,4)*={\circ},(3,4)*={\bullet},(4,4)*={\circ},(5,4)*={\bullet},
(0,3)*={\bullet},(1,3)*={\circ},(2,3)*={\bullet},(3,3)*={\circ},(4,3)*={\bullet},(5,3)*={\circ},
(0,2)*={\circ},(1,2)*={\bullet},(2,2)*={\circ},(3,2)*={\bullet},(4,2)*={\circ},(5,2)*={\bullet},
(0,1)*={\circ},(1,1)*={\circ},(2,1)*={\bullet},(3,1)*={\circ},(4,1)*={\bullet},(5,1)*={\circ},
(0,0)*={\circ},(1,0)*={\circ},(2,0)*={\circ},(3,0)*={\bullet},(4,0)*={\circ},(5,0)*={\bullet},
(0,0);(0,6) **@{-}, (0,0);(6,0) **@{-}, (-1,4);(4,-1) **@{.}
\end{xy}
&&
\begin{xy}<10pt,0pt>:
(0,5)*={\bullet},(1,5)*={\circ},(2,5)*={\bullet},(3,5)*={\circ},(4,5)*={\bullet},(5,5)*={\circ},
(0,4)*={\circ},(1,4)*={\bullet},(2,4)*={\circ},(3,4)*={\bullet},(4,4)*={\circ},(5,4)*={\bullet},
(0,3)*={\circ},(1,3)*={\circ},(2,3)*={\bullet},(3,3)*={\circ},(4,3)*={\bullet},(5,3)*={\circ},
(0,2)*={\circ},(1,2)*={\circ},(2,2)*={\circ},(3,2)*={\bullet},(4,2)*={\circ},(5,2)*={\bullet},
(0,1)*={\circ},(1,1)*={\circ},(2,1)*={\circ},(3,1)*={\circ},(4,1)*={\bullet},(5,1)*={\circ},
(0,0)*={\circ},(1,0)*={\circ},(2,0)*={\circ},(3,0)*={\circ},(4,0)*={\circ},(5,0)*={\bullet},
(0,0);(0,6) **@{-}, (0,0);(6,0) **@{-},  (-1,6);(6,-1) **@{.}
\end{xy}
&&
\begin{xy}<10pt,0pt>:
(0,5)*={\circ},(1,5)*={\circ},(2,5)*={\bullet},(3,5)*={\circ},(4,5)*={\bullet},(5,5)*={\circ},
(0,4)*={\circ},(1,4)*={\circ},(2,4)*={\circ},(3,4)*={\bullet},(4,4)*={\circ},(5,4)*={\bullet},
(0,3)*={\circ},(1,3)*={\circ},(2,3)*={\circ},(3,3)*={\circ},(4,3)*={\bullet},(5,3)*={\circ},
(0,2)*={\circ},(1,2)*={\circ},(2,2)*={\circ},(3,2)*={\circ},(4,2)*={\circ},(5,2)*={\bullet},
(0,1)*={\circ},(1,1)*={\circ},(2,1)*={\circ},(3,1)*={\circ},(4,1)*={\circ},(5,1)*={\circ},
(0,0)*={\circ},(1,0)*={\circ},(2,0)*={\circ},(3,0)*={\circ},(4,0)*={\circ},(5,0)*={\circ},
(0,0);(0,6) **@{-}, (0,0);(6,0) **@{-},  (1,6);(6,1) **@{.}
\end{xy}
\end{array}
$$
\end{figure}
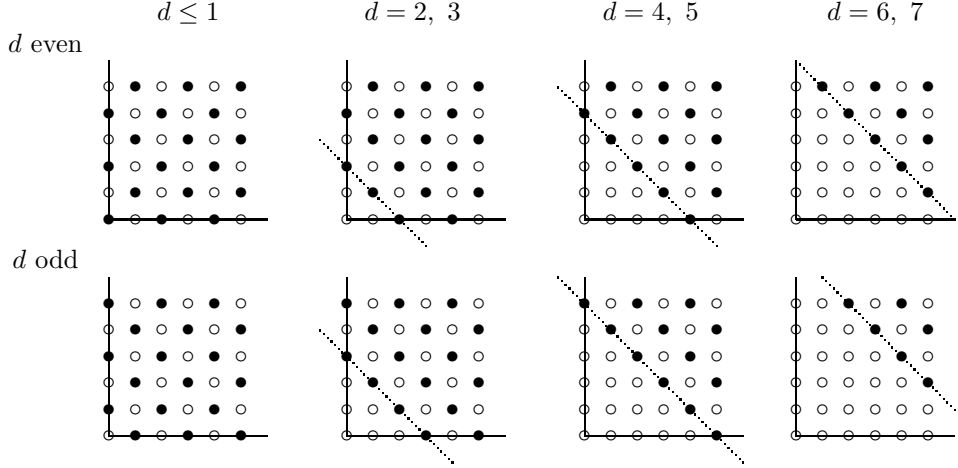
\end{example}

\subsection{Cox ring of a resolution of  a quotient
  singularity}\label{sect-Cox_ring_quotsing}

From now on we consider the case which is of our primary interest. Let
$G\subset GL(V)$ be a finite group without quasi-reflections with
$Y=V/G=\Spec\bC[V]^G$ the quotient. Suppose that $\varphi: X\ra Y$ is
a resolution of singularities. Now, because of \ref{pic-resolution},
$\Cl(X)=\Pic(X)$ is free abelian of rank, say, $m$.  Then the group
$\Hom(\Cl(X),\bC^*)$ is an algebraic torus $\bT=\bT_{\Cl(X)}\iso
(\bC^*)^m$ with the coordinate ring $\bC[\Cl(X)]$. The grading of
$\cR(X)$ in $\Cl(X)$ is associated to the action of $\bT$ on $\cR(X)$
with the ring of invariants equal to $\cR(X)_0=\bC[V]^G$.

If $\cR(X)$ is finitely generated $\bC$-algebra then we can present the
emerging objects in a single diagram
\begin{equation}\label{quotients-diagram}
   \xymatrix
   {\Spec\cR(X)\ \ar[dr]_{\bT}\ar@{-->}[r]&
     X\ar[d]&\ V\ar[dl]_{G}\ar[d]^{[G,G]}\\
     &V/G&\ V/[G,G]\ar[l]^{Ab(G)\ }\ar
     [ull]
   }
\end{equation}
where $X\ra V/G$ is the resolution of singularities. Moreover, $V\ra
V/[G,G]\ra V/G$ and $\Spec\cR(X)\ra V/G$ are the (categorical)
quotients with respect to appropriate group actions (spectra of rings
of invariants), and the rational map $\Spec\cR(X)\dashrightarrow X$ is a GIT
quotient. The morphism of affine schemes $V/[G,G]=\Spec\cR(V/G) \ra
\Spec\cR(X)$ is the map of varieties associated to $\varphi_*:
\cR(X)\ra\cR(V/G)$ introduced above in \ref{push-forward-Cox-rings}.

The action of the torus $\bT=\bT_{\Cl(X)}$ on $\Spec\cR(X)$ is
associated to the multiplication homomorphism
\begin{equation}\label{triviality-3}
  \cR(X)\ra\cR(X)\otimes\bC[\Cl(X)]
\end{equation}
sending $f\in\Gamma(X,\cO_X(D))$ to $f\cdot \chi^{[D]}$, with class
$[D]\in \Cl(X)$ defining the character $\chi^{[D]}$ of the torus in
question. Here we make an identification $\cR(X)\otimes\bC[\Cl(X)] =
\cR(X)[\Cl(X)]$ of the tensor product with Laurent polynomials with
coefficients in $\cR(X)$.

We define a map
\begin{equation}\label{triviality}
  \Theta:\cR(X)\ra\bC[V]^{[G,G]}\otimes\bC[\Cl(X)]
\end{equation}
which to $f\in\Gamma(X,\cO_X(D))$ associates $f\cdot \chi^{[D]}\in
\Gamma(Y,\cO_Y(\varphi_*(D)))\otimes \chi^{[D]}$, where $\chi^{[D]}$
denotes the character of $\bT$. That is, $\Theta$ is a composition of
the pushing down \ref{triviality-1} and multiplication
\ref{triviality-3}.

\begin{proposition}\label{emmbedding}
  The map $\Theta$ defined above is injective.
\end{proposition}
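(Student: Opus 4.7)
My plan is to reduce the injectivity of $\Theta$ to the injectivity of its restriction to each homogeneous graded piece, and then to deduce the latter from the description of sections of reflexive rank-one sheaves as subspaces of the function field.

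First I would decompose an arbitrary element $f \in \cR(X)$ into its homogeneous components $f = \sum_\alpha f_\alpha$ with $f_\alpha \in \Gamma(X, \cO_X(D_\alpha))$ for finitely many pairwise distinct classes $[D_\alpha] \in \Cl(X)$. By the very definition of $\Theta$ given in (\ref{triviality}), we have
\[
\Theta(f) = \sum_\alpha \varphi_*(f_\alpha) \otimes \chi^{[D_\alpha]} \in \bC[V]^{[G,G]} \otimes_\bC \bC[\Cl(X)].
\]
Since $\bC[\Cl(X)]$ is the group algebra of $\Cl(X)$, the characters $\chi^{[D_\alpha]}$ corresponding to distinct classes are $\bC$-linearly independent. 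Writing the tensor product in terms of this basis of the second factor, the vanishing of $\Theta(f)$ forces $\varphi_*(f_\alpha) = 0$ for every $\alpha$. This reduces the problem to showing that $\varphi_* : \Gamma(X, \cO_X(D)) \to \Gamma(V/G, \cO_{V/G}(\varphi_* D))$ is injective for each fixed divisor class.

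Next I would invoke the description (\ref{sections-basis}) identifying $\Gamma(X, \cO_X(D))$ with a subspace of $\bC(X)^*\cup\{0\}$, and similarly $\Gamma(V/G, \cO_{V/G}(\varphi_* D))$ with a subspace of $\bC(V/G)^*\cup\{0\}$. Since $\varphi$ is birational, $\bC(X) = \bC(V/G)$, and the pushforward on sections is, through these identifications, simply the identity map on rational functions, cf.~(\ref{triviality-2}). Thus each graded component of $\varphi_*$ is tautologically injective, which combined with the previous paragraph gives injectivity of $\Theta$.

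The only conceptual point to be careful with is that $\varphi_* : \cR(X) \to \cR(V/G)$ itself is \emph{not} injective (by Proposition \ref{push-forward-Cox-rings} its kernel contains $1 - f_{E_i}$); the trick of the proof is that multiplying by $\chi^{[D]}$ records the class of $D$ in $\Cl(X)$ rather than only in $\Cl(V/G) = \Cl(X)/\langle [E_i]\rangle$. So the main thing to verify is the compatibility of the homogeneous decomposition with the tensoring by the character: no significant obstacle, as this is built into the definition of the $\Cl(X)$-grading on $\cR(X)$ and the identification $\cR(X)\otimes\bC[\Cl(X)] = \cR(X)[\Cl(X)]$ used in (\ref{triviality-3}). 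Once that is laid out, the argument is essentially a two-line observation.
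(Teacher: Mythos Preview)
Your proposal is correct and follows essentially the same approach as the paper's proof, which is a two-line argument: if $\Theta(f_1)=\Theta(f_2)$ then both lie in the same graded piece $\Gamma(X,\cO_X(D))$, and on each graded piece the push-forward (\ref{triviality-1}) is injective since it is an inclusion of subspaces of $\bC(X)=\bC(Y)$. Your version simply makes explicit the reduction to homogeneous elements via linear independence of the characters $\chi^{[D_\alpha]}$ in $\bC[\Cl(X)]$, which the paper leaves implicit.
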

\begin{proof}
  If $\Theta(f_1)=\Theta(f_2)$ then they are both in the same space
  $\Gamma(X,\cO_X(D))$. However the map
  $\Gamma(X,\cO_X(D))\ra\Gamma(Y,\cO_Y(\varphi_*(D)))$ is injective
  hence the claim.
\end{proof}

Now we know that the total coordinate ring of $X$ can be realized as a
subring of the known ring $\bC[V]^{[G,G]} \otimes \bC[\Cl(X)] =
\cR(V/G)[\Cl(X)]$; the problem now is to construct generators of this
subring.

If $G\subset Sp(V)$ is a symplectic group and $\varphi: X\ra V/G$ a
symplectic resolution then we are in situation of
\ref{class-sequence-1} and the description of elements of $\cR(X)$ can
be made even more transparent. Recall that $[L_i]$, with $i=1,\dots,m$
is a $\bQ$-basis of $\NU(X)$, dual to the classes of $C_i$'s,
components of fibers of $\varphi_{|E_i}$, \ref{class-sequence-1}; then
we have embedding of lattices $\Cl(X) \hookrightarrow \bigoplus_i
\bZ[L_i]$. This yields an embedding $\bC[\Cl(X)] \hookrightarrow
\bC[t_1^{\pm 1},\dots,t_m^{\pm 1}]$ where $t_i$'s are variables
associated to $[L_i]$'s, that is $t_i=\chi^{[L_i]}$. By
\begin{equation}\label{triviality-extended}
\overline\Theta:\cR(X)\ra\cR(V/G)[t_1^{\pm 1},\dots, t_m^{\pm 1}]
\end{equation}
we denote the composition of the homomorphism $\Theta$ with the
extension of coefficients $\bC[\Cl(X)] \hookrightarrow \bC[t_1^{\pm
  1},\dots,t_m^{\pm 1}]$. Let us note the following consequence of the
construction of $\overline{\Theta}$.

\begin{lemma}\label{composition-Theta-evaluation}
  The composition of $\overline{\Theta}:\cR(X)\ra\cR(V/G)[t_1^{\pm
    1},\dots, t_m^{\pm 1}]$ with the evaluation homomorphism $ev_1:
  \cR(V/G)[t_1^{\pm 1},\dots,t_m^{\pm 1}]\ra \cR(V/G)$ such that
  $ev_1(t_i)= 1$ for every $i=1,\dots, m$, is equal to the
  push-forward homomorphism $\varphi_*:\cR(X)\ra\cR(V/G)$.
\end{lemma}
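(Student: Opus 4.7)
The plan is a direct unpacking of the definitions of $\Theta$, $\overline\Theta$ and $\varphi_*$. Both $\overline\Theta$ and $\varphi_*$ are $\bC$-linear homomorphisms, and $\cR(X)=\bigoplus_{[D]\in\Cl(X)}\Gamma(X,\cO_X(D))$ is $\bC$-linearly spanned by its graded pieces, so it suffices to check the equality on a homogeneous element $f\in\Gamma(X,\cO_X(D))\subset\cR(X)$.

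First I would fix such an $f$ and expand $[D]=\sum_{i=1}^m a_i[L_i]$ using the embedding $\Cl(X)\hookrightarrow\bigoplus_i\bZ[L_i]$ from (\ref{class-sequence-1}). Under the extension of coefficients $\bC[\Cl(X)]\hookrightarrow\bC[t_1^{\pm 1},\dots,t_m^{\pm 1}]$ defining $\overline\Theta$ in (\ref{triviality-extended}), the character $\chi^{[D]}$ is by construction sent to the Laurent monomial $\prod_i t_i^{a_i}$, since $t_i=\chi^{[L_i]}$. On the other hand, by the definition of $\Theta$ in (\ref{triviality}), we have $\Theta(f)=\varphi_*(f)\cdot\chi^{[D]}$, where on the first factor we use the push-forward inclusion $\Gamma(X,\cO_X(D))\hookrightarrow\Gamma(V/G,\cO_{V/G}(\varphi_*D))\subset\cR(V/G)$ of (\ref{triviality-1}), i.e.~the very map $\varphi_*$ of Proposition \ref{push-forward-Cox-rings}. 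Combining these two identifications,
\[
  \overline\Theta(f)\;=\;\varphi_*(f)\cdot\prod_{i=1}^m t_i^{a_i}\;\in\;\cR(V/G)[t_1^{\pm 1},\dots,t_m^{\pm 1}].
\]

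Next I would apply $ev_1$, which by definition is the identity on the coefficient ring $\cR(V/G)$ and sends each $t_i$ to $1$, hence sends the Laurent monomial $\prod_i t_i^{a_i}$ to $1$. This gives $ev_1\bigl(\overline\Theta(f)\bigr)=\varphi_*(f)$, which is the required equality on the homogeneous piece. By $\bC$-linearity the identity $ev_1\circ\overline\Theta=\varphi_*$ extends to all of $\cR(X)$.

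The argument has no real obstacle; the only care needed is bookkeeping of the various identifications, namely the inclusion $\Cl(X)\hookrightarrow\bigoplus_i\bZ[L_i]$, the identification $\cR(X)\otimes\bC[\Cl(X)]=\cR(X)[\Cl(X)]$ used in (\ref{triviality-3}), and the matching of $\chi^{[D]}$ with $\prod_i t_i^{a_i}$. Once these are made explicit, evaluating $t_i=1$ precisely kills the grading bookkeeping factor and leaves the push-forward $\varphi_*(f)$.
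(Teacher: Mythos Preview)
Your proposal is correct and is exactly the unpacking that the paper has in mind: the paper itself gives no argument beyond the remark that the lemma is ``a consequence of the construction of $\overline{\Theta}$,'' and your proof simply spells out that construction on homogeneous elements. There is nothing to add.
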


\begin{corollary}\label{emmbedding-1}
  Assume that we are in the situation discussed above.  Let
  $f_D\in\cR(V/G)=\bC[V]^{[G,G]}$ be a non-zero element associated to
  an effective Weil divisor $D$ on $V/G$. Let
  $\overline{D}=\varphi^{-1}_* D$ be its strict transform in $X$. If
  $f_{\overline{D}}\in\cR(X)$ is associated to $\overline{D}$ in the
  total coordinate ring of $X$ then
  $$\overline{\Theta}(f_{\overline{D}})=
  f_D\cdot\prod_i t_i^{(\overline{D}\cdot C_i)}$$
\end{corollary}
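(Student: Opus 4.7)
The plan is to prove the corollary by unwinding the definition of $\overline{\Theta}$ from \ref{triviality-extended} and identifying the two tensor factors of $\Theta(f_{\overline{D}}) = \varphi_*(f_{\overline{D}}) \cdot \chi^{[\overline{D}]}$ separately inside $\cR(V/G) \otimes \bC[\Cl(X)]$, before applying the extension-of-coefficients map $\chi^{[L_i]} \mapsto t_i$.

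For the first factor, I would show that $\varphi_*(f_{\overline{D}}) = f_D$. This uses Proposition \ref{push-forward-Cox-rings} together with the description \ref{triviality-2}: under the identification $\bC(X) = \bC(V/G)$, the push-forward reinterprets the rational function $f_{\overline{D}}$, which is a section of $\cO_X(\overline{D})$ with zero locus $\overline{D}$, as a section of $\cO_{V/G}(\varphi_*\overline{D}) = \cO_{V/G}(D)$. Its zero locus there is $\varphi_*\overline{D} = D$, since $\overline{D} = \varphi^{-1}_* D$ is the strict transform, so the resulting section is $f_D$ up to a scalar in $\bC^*$ (the usual indeterminacy in the notation $f_{D'}$).

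For the second factor, I would invoke the diagram \ref{class-sequence-1}. By construction the basis $\{[L_i]\}$ of $\NU(X)$ is dual to the basis $\{[C_j]\}$ of $\Nu(X)$ under the intersection pairing, so the image of $[\overline{D}]$ in $\bigoplus_i \bZ[L_i]$ is precisely $\sum_i (\overline{D} \cdot C_i)\,[L_i]$. After applying the coefficient extension $\bC[\Cl(X)] \hookrightarrow \bC[t_1^{\pm 1}, \ldots, t_m^{\pm 1}]$, the character $\chi^{[\overline{D}]}$ becomes the monomial $\prod_i t_i^{\overline{D} \cdot C_i}$, and combining the two factors gives the stated formula.

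I do not expect a genuine obstacle: the argument is purely formal, assembling constructions already in place in Sections \ref{CoxRing}, \ref{sect-quotsing}, and \ref{sect-triviality}. The only point that warrants careful bookkeeping is the compatibility, on the level of rational functions inside $\bC(X) = \bC(V/G)$, between the basis representatives of divisor classes implicitly chosen in \ref{sections-basis} for $\cR(X)$ and $\cR(V/G)$; any discrepancy is absorbed into the scalar ambiguity mentioned above. As a consistency check, setting all $t_i = 1$ collapses the right-hand side to $f_D$, in agreement with Lemma \ref{composition-Theta-evaluation}.
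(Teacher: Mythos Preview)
Your proposal is correct and follows essentially the same approach as the paper: identify the $\cR(V/G)$-coefficient of $\overline{\Theta}(f_{\overline{D}})$ as $f_D$ via the push-forward, and then read off the $\Cl(X)$-degree using the embedding $\Cl(X)\hookrightarrow\bigoplus_i\bZ[L_i]$ from \ref{class-sequence-1}, where duality of $[L_i]$ and $[C_j]$ gives the exponents $\overline{D}\cdot C_i$. The paper's proof is a one-sentence version of exactly this, so your write-up simply spells out the details more carefully.
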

\begin{proof}
  The $\cR(V/G)$-coefficient of $f_{\overline{D}}$ is $f_D$ and it
  remains to verify the degree of $f_{\overline{D}}$ with respect to
  $\Cl(X)$ which is provided by the homomorphism $\Cl(X)
  \hookrightarrow \bigoplus_i\bZ[L_i]$ in \ref{class-sequence-1}.
\end{proof}

%%%%%%%%%%%%%%%%%%%%%%%%%%%%%%%%%%%%%%%%%%%%%%%%

\subsection{From the group $G$ to a torus $\bT$}\label{section-torus}

From this point on by $G$ we denote the group introduced in Section
\ref{the-group}.  The ring of invariants of $[G,G]=\langle -I_V\rangle$
is generated by quadratic forms in $\bC[x_1,x_2,x_3,x_4]$. We note
that the linear space of forms is $S^2V^*$. The following observation
can be verified easily.

\begin{lemma}\label{eigenvectors}
  The action of $Ab(G)=\bZ_2^4$ on $S^2V^*$ yields a decomposition of
  $S^2V^*$ into the sum of 1-dimensional eigenspaces generated by the
  functions $\F_{ij}$ given in the following table. The action of the
  class of $T_i$ in $Ab(G)$ on the function $\F_{rs}$ is by
  multiplication by $\pm 1$, as indicated in the following table:
  \begin{equation}\label{eq-eigenvectors}
    \begin{array}{llccccc}
      {\rm }&{\rm function} &T_0&T_1&T_2&T_3&T_4\\
      \F_{01}=&  -2(x_1x_4+x_2x_3)         &-&-&+&+&+\\
      \F_{02}=&  2\sqrt{-1}(-x_1x_4+x_2x_3)        &-&+&-&+&+\\
      \F_{03}=&  2\sqrt{-1}(x_1x_2+x_3x_4)         &-&+&+&-&+\\
      \F_{04}=&  2(-x_1x_2+x_3x_4)         &-&+&+&+&-\\
      \F_{12}=&  2(x_1x_3-x_2x_4)         &+&-&-&+&+\\
      \F_{13}=&  -x_1^2-x_2^2+x_3^2+x_4^2  &+&-&+&-&+\\
      \F_{14}=& \sqrt{-1}( x_1^2+x_2^2+x_3^2+x_4^2)&+&-&+&+&-\\
      \F_{23}=& \sqrt{-1}(-x_1^2+x_2^2-x_3^2+x_4^2)&+&+&-&-&+\\
      \F_{24}=&  x_1^2-x_2^2-x_3^2+x_4^2 &+&+&-&+&-\\
      \F_{34}=&  2(x_1x_3+x_2x_4)         &+&+&+&-&-\\
    \end{array}
  \end{equation}
\end{lemma}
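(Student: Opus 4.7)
The plan is to treat this as a representation-theoretic decomposition confirmed by a bookkeeping-style computation. First I observe that $\dim S^2 V^* = \binom{4+1}{2} = 10$, while $[G,G] = \langle -I \rangle$ acts trivially on degree-two polynomials, so the $G$-action on $S^2 V^*$ factors through $Ab(G)$. Since $Ab(G) \iso \bZ_2^4$ is a finite abelian group acting on a complex vector space, $S^2 V^*$ splits as a direct sum of one-dimensional eigenspaces indexed by some multiset of characters. The relation $T_0 T_1 T_2 T_3 T_4 = I$ from Lemma \ref{matrices-properties} shows that the five classes $[T_i]$ generate $Ab(G)$ subject to $\sum_i [T_i] = 0$, so a character of $Ab(G)$ is the same data as a tuple $(\eps_0, \dots, \eps_4) \in \{\pm 1\}^5$ with $\prod_i \eps_i = +1$. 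There are $\binom{5}{0} + \binom{5}{2} + \binom{5}{4} = 16$ such tuples, matching $|Ab(G)^\vee| = 16$.

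Next I would verify by direct matrix computation that each proposed $\F_{ij}$ is an eigenvector with the stated character. Concretely, for a fixed pair $0 \leq i < j \leq 4$, one substitutes the matrices from (\ref{matrices}) into the given quadratic form and checks that $T_k \cdot \F_{ij} = (-1)^{\delta} \F_{ij}$ where $\delta = 1$ iff $k \in \{i,j\}$. For instance, on $\F_{13} = -x_1^2 - x_2^2 + x_3^2 + x_4^2$, the matrix $T_0 = \diag(1,-1,1,-1)$ preserves each $x_l^2$ hence $\F_{13}$; while $T_1$, which swaps the $\{x_1, x_2\}$-block with a sign and similarly the $\{x_3, x_4\}$-block, sends $x_1^2 + x_2^2 \mapsto -(x_1^2 + x_2^2)$ (and analogously on the other block), giving the required $-1$ eigenvalue. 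The checks for $T_2, T_3, T_4$ are entirely analogous, and the remaining nine $\F_{ij}$'s are handled in the same routine fashion. These calculations are mechanical but the main labor of the lemma lies there.

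Finally I would note that the ten sign-patterns read off the table correspond exactly to the ten characters of $Ab(G)$ with precisely two negative entries, and in particular are pairwise distinct. Hence the $\F_{ij}$ lie in ten pairwise distinct eigenspaces of $Ab(G)$ on $S^2 V^*$, so they are automatically linearly independent. Since $\dim S^2 V^* = 10$, they span, and each eigenspace is one-dimensional, completing the proof. Incidentally, this also records that the remaining six characters of $Ab(G)$ (the trivial one and the five with exactly four minus signs) do not appear in $S^2 V^*$, which will matter later when matching eigenmodules with divisor classes on the resolution.
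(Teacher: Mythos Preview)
Your proposal is correct and is essentially the direct verification the paper has in mind; the paper itself gives no proof beyond the remark that ``the following observation can be verified easily.'' Your added representation-theoretic framing (the character count via $\prod_i \eps_i = +1$ and the distinctness-hence-independence argument) is a tidy way to certify that the ten eigenvectors actually span $S^2V^*$, which the paper leaves implicit.
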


The labeling of functions $\F_{rs}$ indicates an isomorphism between
$S^2V^*$ and $\bigwedge^2W^*$ where $W$ is a 5-dimensional space with
coordinates $t_0,\dots, t_4$; under this isomorphism the function
$\F_{rs}$ is associated to the 2-form $t_r\wedge t_s$.

  In fact, the representation of $Sp(V)$ on $\bigwedge^2V$
  splits into $W\bigoplus\bC$ where $\bC$ stands for the trivial
  representation and the action of $Sp(4,\bC)$ on $W$ is associated to
  the double cover $Sp(4,\bC)\ra SO(5,\bC)$ whose kernel is $-I$. Then
  we have the natural isomorphism of $Sp(4,\bC)$ representations
  $\bigwedge^2W\iso S^2V$, \cite[16.2]{FultonHarris}. The
  coordinates $\F_{rs}$ diagonalize the induced action of $T_i$'s.

Let $\bT_W$ be the standard torus of $W$ with
$\Hom(\bT_W,\bC^*)\iso\bigoplus_{i=0}^4 \bZ e_i$ and characters
$t_0=\chi^{e_0},\dots, t_4=\chi^{e_4}$. Let
$\Lambda\subset\Hom(\bT_W,\bC^*)$ be the index 2 sublattice of
$\bigoplus_{i=0}^4 \bZ e_i$ consisting of characters invariant by
multiplication by $-I_W$; that is $\Lambda=\{\sum a_ie_i: a_i\in\bZ,\
2|\sum a_i\}$. We have a surjective morphism $\bT_W\ra\bT_\Lambda$
with kernel $\langle -I_W\rangle$ which is associated to the inclusion
of lattices of characters. Since $-I_W$ acts trivially on
$\bigwedge^2W^*$ the action of $\bT_W$ on $\bigwedge^2W^*$ descends to
the action of $\bT_\Lambda$.

Let $\widetilde{T}_i: W^*\ra W^*$ be a homomorphism defined as follows
$\widetilde{T}_i(t_i)=-t_i$, $\widetilde{T}_i(t_j)=t_j$, for $j\ne
i$. We have injection $\bigoplus_{i=0}^4\bZ_2\widetilde{T}_i
\hookrightarrow \bT_W$ and thus a morphism $\bigoplus_{i=0}^4
\bZ_2\widetilde{T}_i \ra \bT_\Lambda$ with kernel $\langle
-I_W\rangle$. We summarize this discussion in the following.

\begin{lemma}\label{equivariant}
  The homomorphism of groups $Ab(G)=G/[G,G]\ra \bT_\Lambda$ which maps
  the class of $\pm T_i$ in $Ab(G)$ to the class of
  $\pm\widetilde{T}_i$ in $\bT_\Lambda$ makes the isomorphism $S^2V^*
  \iso \bigwedge^2W^*$ equivariant with respect to the action of
  $Ab(G)$.
\end{lemma}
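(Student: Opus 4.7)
The plan is to reduce the statement to a direct verification on the explicit diagonalizing bases. First I would check that the prescription $[\pm T_i] \mapsto [\pm \widetilde{T}_i]$ does extend to a well-defined group homomorphism $Ab(G) \to \bT_\Lambda$. By Lemma \ref{group}(2) we have $Ab(G) \cong \bZ_2^4$, generated by the classes of $T_0, \ldots, T_4$ subject only to the relations that all generators are involutions, that they commute, and that $[T_0][T_1][T_2][T_3][T_4] = [I]$, inherited from the identity $T_0 T_1 T_2 T_3 T_4 = I$ in $G$ of Lemma \ref{matrices-properties}(1). The images $\widetilde{T}_i$ are commuting diagonal involutions in $\bT_W$, and the product $\widetilde{T}_0 \widetilde{T}_1 \widetilde{T}_2 \widetilde{T}_3 \widetilde{T}_4$ sends every $t_i$ to $-t_i$, so it equals $-I_W$, which is precisely the kernel of the projection $\bT_W \to \bT_\Lambda$. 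Hence the three relations are satisfied in $\bT_\Lambda$ and the homomorphism is well-defined.

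Second, I would check equivariance on the basis $\{\F_{rs}\}$ of $S^2V^*$ matched with $\{t_r \wedge t_s\}$ in $\bigwedge^2 W^*$, which is the isomorphism described just before the lemma. Both are simultaneous eigen-bases for the two actions, so it suffices to compare the eigenvalues of $[T_i]$ on $\F_{rs}$ with those of $[\widetilde{T}_i]$ on $t_r \wedge t_s$. On the $W$-side the calculation is immediate: $\widetilde{T}_i$ multiplies $t_r \wedge t_s$ by $(-1)^{|\{r,s\} \cap \{i\}|}$, giving $-1$ exactly when $i \in \{r,s\}$ and $+1$ otherwise. On the $V$-side, inspection of the sign-table in Lemma \ref{eigenvectors} shows that the entry at row $\F_{rs}$ and column $T_i$ is $-$ precisely when $i \in \{r,s\}$ and $+$ otherwise. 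The two patterns agree entry-by-entry, so corresponding eigen-lines are mapped to corresponding eigen-lines with the same eigen-character, and the isomorphism is $Ab(G)$-equivariant.

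There is essentially no conceptual obstacle: the lemma is a repackaging of Lemma \ref{eigenvectors} in terms of the standard diagonal torus action on $\bigwedge^2 W^*$, and the only thing to watch is that the common kernel $\{\pm I\}$ is handled consistently on both sides — which is exactly what the passage from $\bT_W$ to $\bT_\Lambda$ (and from $G$ to $Ab(G)$) accomplishes. Once the sign patterns in the table are matched to the combinatorial rule ``$i \in \{r,s\}$'', the proof is complete.
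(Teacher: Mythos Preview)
Your proof is correct and follows the same approach the paper takes: the paper presents this lemma as a summary of the preceding discussion without a separate proof, and your argument simply makes explicit the two verifications implicit there --- well-definedness of the homomorphism (using $T_0T_1T_2T_3T_4=I$ and $\widetilde{T}_0\cdots\widetilde{T}_4=-I_W\in\ker(\bT_W\to\bT_\Lambda)$) and the matching of eigen-characters via the sign table of Lemma~\ref{eigenvectors}.
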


We note that the above homomorphism $Ab(G)\ra\bT_\Lambda$ can be
described in terms of characters of these groups.  Let
$\bigoplus_{i=0}^4 \bZ e_i\ra\bigoplus_{i=0}^4 \bZ_2 e_i$ be the
reduction modulo 2. The latter group can be interpreted as the group
of characters of $\bigoplus_{i=0}^4 \bZ_2 \widetilde{T}_i$.  The
morphism $\bigoplus_{i=0}^4 \bZ_2 \widetilde{T}_i\ra Ab(G)$ which maps
$\widetilde{T}_i$ to $[\pm T_i]$ implies inclusion $Ab(G)^\vee
\hookrightarrow (\bigoplus_{i=0}^4 \bZ_2 \widetilde{T}_i)^\vee =
\bigoplus_{i=0}^4 \bZ_2 e_i$ and because of the inclusion
$\Lambda\hookrightarrow \bigoplus_{i=0}^4 \bZ e_i$ we get a surjective
homomorphism of groups of characters $\Lambda\ra Ab(G)^\vee$.

\begin{definition}\label{def-CR-resolution}
  Let $\cR\subset\bC[V]\otimes\bC[\bT_W]=\bC[x_1,\dots,x_4,t_0^{\pm
    1},\dots, t_4^{\pm 1}]$ be the subring generated by the following
  functions:
  \begin{itemize}
  \item $\F_{ij}\cdot t_it_j$, where $0\leq i < j\leq 4$,
  \item $t_i^{-2}$, where $i=0,\dots,4$
  \end{itemize}
\end{definition}

The torus $\bT_W$ acts naturally on $\bC[V]\otimes\bC[\bT_W]$ by
multiplication of the right factor and the inclusion
$\cR\subset\bC[V]\otimes\bC[\bT_W]$ is $\bT_W$ equivariant.  We see
that $-I_W$ acts on $\cR$ trivially so the action of $\bT_W$ on $\cR$
descends to the action of $\bT_\Lambda$. Note that we have a
surjective homomorphism $\cR\ra\bC[V]^{\langle
  -I_V\rangle}\subset\bC[V]$ obtained by setting $t_i\mapsto 1$.

\begin{proposition}\label{map-of-invariants}
  The induced homomorphism $\cR^{\bT_W}\ra\bC[V]^{[G,G]}\subset\bC[V]$
  is an injection onto the ring of invariants $\bC[V]^G$. Therefore
  $\cR^{\bT_W}\iso \bC[V]^G$.
\end{proposition}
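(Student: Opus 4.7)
The plan is to reduce the statement to a combinatorial identification of $\bT_W$-weight-zero monomials in $\cR$ with $G$-invariant polynomials in $\bC[V]$. First I would observe that $\bT_W$ acts trivially on the first factor of $\bC[V]\otimes\bC[\bT_W]$ and by multiplication on the second, so $(\bC[V]\otimes\bC[\bT_W])^{\bT_W}=\bC[V]\otimes\bC=\bC[V]$. Hence $\cR^{\bT_W}=\cR\cap\bC[V]$, and the evaluation map $t_i\mapsto 1$ restricted to $\cR^{\bT_W}$ coincides with the inclusion into $\bC[V]$. This yields injectivity for free and reduces the problem to computing the image as a subring of $\bC[V]$.

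Next I would analyze the $\bT_W$-invariant monomials in the given generators of $\cR$. A monomial $\prod_{i<j}(\F_{ij}t_it_j)^{a_{ij}}\prod_k(t_k^{-2})^{b_k}$ has $k$-th $\bT_W$-weight $\sum_{j\ne k}a_{kj}-2b_k$, so invariance forces $b_k=\tfrac12 s_k$ where $s_k:=\sum_{j\ne k}a_{kj}$; in particular every row sum $s_k$ must be even, and the $b_k$'s are then uniquely determined by the $a_{ij}$'s. Setting $t_i=1$ reduces such a monomial to $\prod_{i<j}\F_{ij}^{a_{ij}}\in\bC[V]$. Therefore the image of $\cR^{\bT_W}$ in $\bC[V]$ is the $\bC$-span of those products $\prod\F_{ij}^{a_{ij}}$ whose exponents have all row sums $s_k$ even.

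To identify this image with $\bC[V]^G$, I would consult Lemma \ref{eigenvectors}: $T_k$ acts on $\F_{ij}$ by $-1$ exactly when $k\in\{i,j\}$, hence on $\prod\F_{ij}^{a_{ij}}$ by $(-1)^{s_k}$. Thus the selected monomials are precisely those fixed by every $T_k$, and consequently by $Ab(G)$; they are also fixed by $[G,G]=\langle -I_V\rangle$ since their total degree $2\sum a_{ij}$ is even. This gives the inclusion of the image in $\bC[V]^G$. For the reverse inclusion I would use that $\bC[V]^{[G,G]}$ is generated by quadratic forms, that $\{\F_{ij}\}_{0\le i<j\le 4}$ is a $\bC$-basis of $S^2V^*$ by Lemma \ref{eigenvectors}, and hence that $\bC[V]^{[G,G]}$ is spanned as a $\bC$-vector space by the monomials $\prod\F_{ij}^{a_{ij}}$. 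Each such monomial is an $Ab(G)$-eigenvector with character determined by the parities of the $s_k$, so the Reynolds projection $\bC[V]^{[G,G]}\twoheadrightarrow\bC[V]^G$ kills the monomials with some odd $s_k$ and preserves the others. Consequently $\bC[V]^G$ is $\bC$-spanned by exactly the monomials that lie in the image, completing the proof.

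I do not anticipate a serious obstacle. The only slightly subtle point is the Reynolds/eigenspace argument in the last step, which shows that a $G$-invariant polynomial in the (algebraically dependent) quadratic forms $\F_{ij}$ can be written as a $\bC$-linear combination of $Ab(G)$-invariant monomials in them; this is immediate from the complete reducibility of the action of a finite abelian group in characteristic zero.
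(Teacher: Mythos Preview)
Your proposal is correct and follows essentially the same route as the paper: both identify the $\bT_W$-invariant monomials in the generators via the parity condition on the row sums $s_k$, match these with the $Ab(G)$-invariant monomials in the $\F_{ij}$ using the eigenvalue table, and obtain surjectivity onto $\bC[V]^G$ by writing a $G$-invariant monomial $\prod\F_{ij}^{a_{ij}}$ as the image of $\prod(\F_{ij}t_it_j)^{a_{ij}}\prod(t_k^{-2})^{s_k/2}$. The only cosmetic difference is that the paper packages the inclusion of the image in $\bC[V]^G$ via the $Ab(G)$-equivariance of the evaluation map (Lemma~\ref{equivariant}), whereas you compute it directly on monomials; and for injectivity the paper says $(t_i-1)\cap\bC[\bT_W]^{\bT_W}=(0)$ while you phrase the same thing as $\cR^{\bT_W}=\cR\cap\bC[V]$.
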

\begin{proof}
  As noted in \ref{equivariant} we have an injection
  $Ab(G)\hookrightarrow \bT_\Lambda$ and we claim that the morphism
  $\cR \ra \bC[V]^{[G,G]}$ is $Ab(G)$-equivariant. Indeed, the action
  of $Ab(G)\hookrightarrow\bT_W$ on $\F_{ij}t_it_j$ agrees with that
  of $G$ on $\F_{ij}$, while on $t_i^2$ the group
  $Ab(G)\hookrightarrow\bT_W$ acts trivially. Therefore, in
  particular, we have $\cR^{\bT_W}\ra\bC[V]^G$. The injectivity of
  this homomorphism is clear since $(t_i-1, i=0,\dots,4) \cap
  \bC[\bT_W]^{\bT_W}=(0)$. It remains to prove surjectivity. To this
  end, we note that a monomial $\prod_{i,j}\F_{ij}^{a_{ij}}\in
  \bC[V]^{\langle -I\rangle}$ is $G$-invariant if, for every
  $k=0,\dots,4$, the sum $s_k=\sum_{k\in\{i,j\}}a_{ij}$ is divisible
  by 2. But then the monomial in question is the image of the $\bT_W$
  invariant monomial $\prod_{i,j}(\F_{ij}\cdot t_it_j)^{a_{ij}}
  \prod_k(t_k^{-2})^{s_k/2}\in \cR$.
\end{proof}

%%%%%%%%%%%%%%%%%%%%%%%%%%%%%%%%%%%%%%%%%%%%%%%%%%%%%%%%%%%%%%%%%%

\subsection{Generators of ideals}\label{section-generators}
We present the ring $\cR$ as the quotient ring of the graded
polynomial ring $\bC[w_{ij}, u_k: k=0,\dots,4, 0\leq i<j\leq 4]$ with
the grading in $\Hom(\bT_W,\bC^*)\iso \bigoplus_{m=0}^4 \bZ\cdot e_m$ given
by the formula $\deg w_{ij}=e_i+e_j$, $\deg u_k=-2e_k$.
\begin{proposition}\label{ideal-gens}
  The homomorphism $\bC[w_{ij}, u_k: k=0,\dots,4, 0\leq i<j\leq 4]\ra
  \cR$ which sends $w_{ij}$ to $\F_{ij}t_it_j$ and $u_k$ to $t_k^{-2}$
  is surjective and preserves grading. Its kernel, denoted by
  $\cI$, is generated by the following homogeneous polynomials
$$\begin{array}{ll}
w_{14}w_{23}+w_{13}w_{24}-w_{12}w_{34}&w_{04}w_{23}-w_{03}w_{24}-w_{02}w_{34}\\
w_{04}w_{13}+w_{03}w_{14}-w_{01}w_{34}&w_{04}w_{12}-w_{02}w_{14}-w_{01}w_{24}\\
w_{03}w_{12}+w_{02}w_{13}-w_{01}w_{23}\\
w_{02}w_{12}u_2-w_{03}w_{13}u_3+w_{04}w_{14}u_4&
w_{01}w_{14}u_1-w_{02}w_{24}u_2+w_{03}w_{34}u_3\\
w_{01}w_{13}u_1+w_{02}w_{23}u_2+w_{04}w_{34}u_4&
w_{01}w_{12}u_1+w_{03}w_{23}u_3+w_{04}w_{24}u_4\\
w_{03}w_{04}u_0-w_{13}w_{14}u_1+w_{23}w_{24}u_2&
w_{02}w_{04}u_0+w_{12}w_{14}u_1+w_{23}w_{34}u_3\\
w_{01}w_{04}u_0+w_{12}w_{24}u_2+w_{13}w_{34}u_3&
w_{02}w_{03}u_0-w_{12}w_{13}u_1-w_{24}w_{34}u_4\\
w_{01}w_{03}u_0+w_{12}w_{23}u_2+w_{14}w_{34}u_4&
w_{01}w_{02}u_0+w_{13}w_{23}u_3-w_{14}w_{24}u_4\\
w_{02}^2u_0+w_{12}^2u_1+w_{23}^2u_3+w_{24}^2u_4&
w_{03}^2u_0+w_{13}^2u_1+w_{23}^2u_2+w_{34}^2u_4\\
w_{01}^2u_1+w_{02}^2u_2+w_{03}^2u_3+w_{04}^2u_4&
w_{04}^2u_0+w_{14}^2u_1+w_{24}^2u_2+w_{34}^2u_3\\
w_{01}^2u_0+w_{12}^2u_2+w_{13}^2u_3+w_{14}^2u_4
\end{array}$$
\end{proposition}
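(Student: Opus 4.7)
\emph{Proof plan.} Surjectivity and compatibility with the $\bZ^5$-multigrading are immediate from Definition \ref{def-CR-resolution}: the elements $\F_{ij}t_it_j$ and $t_k^{-2}$ are the chosen generators of $\cR$, and since $\F_{ij}\in\bC[V]$ is invariant under $\bT_W$ their degrees agree with the prescribed $e_i+e_j$ and $-2e_k$.

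To verify that each listed polynomial lies in the kernel, apply the homomorphism and cancel the common $t$-monomial from each multihomogeneous piece. The first five relations are the Pl\"ucker relations for $Gr(2,W) \subset \bP(\bigwedge^2W^*)$ and follow from the $Sp(4,\bC)$-equivariant isomorphism $S^2V^* \iso \bigwedge^2 W^*$ recalled after Lemma \ref{eigenvectors}, under which $\F_{ij}$ corresponds to $t_i \wedge t_j$. The remaining fifteen relations reduce, after stripping the $t$-factor, to polynomial identities in $\bC[V]$ such as $\F_{02}\F_{12} - \F_{03}\F_{13} + \F_{04}\F_{14} = 0$ and $\F_{01}^2 + \F_{02}^2 + \F_{03}^2 + \F_{04}^2 = 0$; each is checked by direct substitution of the explicit formulas of Lemma \ref{eigenvectors}.

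The main task is to show that the ideal $\cJ$ generated by the listed twenty polynomials exhausts the full kernel $\cI$. The plan is a multigraded dimension-and-Hilbert-function comparison. On the target side, $\cR$ has transcendence degree $9$: it contains $\bC[V]^{\langle -I_V\rangle}\otimes\bC[\bT_\Lambda]$ of transcendence degree $4+5$, and sits inside $\bC[V]\otimes\bC[\bT_W]$ of the same transcendence degree, so $\cI$ must have height $6$ in the $15$-variable polynomial ring. On the source side, the inclusion $\cJ\subseteq\cI$ is already established and $\cJ$ is multihomogeneous, so one has a surjection $\bC[w,u]/\cJ \twoheadrightarrow \cR$ on each multigraded piece. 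A Gr\"obner basis computation, carried out with \cite{M2} and \cite{Singular} in line with the computational strategy running throughout the paper, then confirms that $\bC[w,u]/\cJ$ has Krull dimension $9$, is reduced and irreducible, and has the same Hilbert function as $\cR$ in the $\bZ^5$-multigrading. These comparisons force $\cJ = \cI$.

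The main obstacle is precisely this final matching. A conceptual route would note that the first five relations cut out the affine cone over $Gr(2,5)$, a $7$-dimensional variety in the ten $w$-coordinates, and that the remaining fifteen relations, ten cubic and five quartic, are expected to cut a $9$-dimensional irreducible subvariety once the five $u$-coordinates are adjoined. However, excluding spurious irreducible components and embedded primes, and keeping track of the full multigrading, seems in practice to require the explicit Gr\"obner basis verification.
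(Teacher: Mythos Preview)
Your proposal is essentially the same as the paper's: the authors' own proof reads, in full, ``The first part is clear. The second part, that is the generators of the kernel $\cI$, are obtained by computer calculation.'' Your write-up supplies a concrete verification strategy for that computation (check $\cJ\subseteq\cI$ by hand, then use a Gr\"obner computation to see that $\bC[w,u]/\cJ$ is a $9$-dimensional domain, forcing the surjection onto the $9$-dimensional domain $\cR$ to be an isomorphism), which is a perfectly good way to organize the machine check.

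One small imprecision: $\cR$ does not literally contain $\bC[V]^{\langle -I_V\rangle}\otimes\bC[\bT_\Lambda]$ (for instance $\phi_{01}$ alone, without its $t$-factor, is not in $\cR$). The correct argument for $\dim\cR=9$ is the one implicit in Proposition \ref{map-of-invariants}: $\cR^{\bT_\Lambda}\cong\bC[V]^G$ has dimension $4$, and the $\Lambda$-grading is effective since the degrees $e_i+e_j$ and $-2e_k$ span $\Lambda$, so $\dim\cR=4+5$. Once you have that, the domain-plus-dimension check already forces $\cJ=\cI$, and the separate Hilbert-function comparison is unnecessary.
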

\begin{proof}
  The first part is clear. The second part, that is the generators of
  the kernel $\cI$, are obtained by computer calculation.
\end{proof}

The next observation follows from \ref{Cox-P^2_4}.
\begin{corollary}\label{emmbeddP^2_4}
  The ideal $\cI_0=\cI+(u_0,\dots, u_4)$ descends to an ideal in
  $\bC[w_{ij}]=\bC[w_{ij},u_k]/(u_0,\dots,u_4)$ which is an ideal of
  the affine cone over the Grassmann variety $Gr(2,W)$ embedded via
  Pl\"ucker embedding in $\bP(\bigwedge^2 W^*)$ with the associated
  grading coming from the action of $\bT_W$. In particular, we can
  identify $\Cl(\bP^2_4)$ with $\Lambda$ and we have a $\bT_\Lambda$
  equivariant embedding $\Spec\cR(\bP^2_4)\hookrightarrow\Spec\cR$.
\end{corollary}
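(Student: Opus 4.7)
The plan is to compare the two presentations of generators side by side. First, the ideal $\cI_0 = \cI + (u_0,\dots,u_4)$ descends in $\bC[w_{ij}] = \bC[w_{ij},u_k]/(u_0,\dots,u_4)$ to the ideal obtained by discarding every generator of $\cI$ in which some $u_k$ appears. Inspection of the explicit list in Proposition \ref{ideal-gens} shows that exactly the first five generators are polynomials in the $w_{ij}$ alone, and they coincide term for term with the five quadratic Pl\"ucker trinomials of Proposition \ref{Cox-P^2_4}. Hence the descended ideal is precisely the homogeneous ideal of the affine cone over $Gr(2,W) \subset \bP(\bigwedge^2 W^*)$, so $\Spec\bC[w_{ij}]/\cI_0$ is that cone, and by Proposition \ref{Cox-P^2_4} this is $\Spec\cR(\bP^2_4)$.

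Next I would match the gradings. In Proposition \ref{ideal-gens} the $\bT_W$-grading has $\deg w_{ij} = e_i + e_j$ and $\deg u_k = -2e_k$; in both cases the coordinate sum is even, so every degree lies in $\Lambda \subset \bigoplus_m \bZ e_m$, and the $\bT_W$-action on $\Spec\cR$ factors through $\bT_\Lambda$. Restricting this grading to $\bC[w_{ij}]/\cI_0$ gives exactly the grading of $\cR(\bP^2_4)$ described in Proposition \ref{Cox-P^2_4}, under the standard identification $\Cl(\bP^2_4) = \Lambda$ which sends $[F_{ij}]$ to the degree of its defining section $w_{ij}$, namely $e_i + e_j$ (this is the same $\Lambda$ as in Lemma \ref{Nef-Eff-P^2_4}, up to the rescaling $e_i \leftrightarrow e_i/2$).

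Finally, the closed embedding $\Spec\cR(\bP^2_4) \hookrightarrow \Spec\cR$ is the inclusion realized by intersecting $\Spec\cR \subset \Spec\bC[w_{ij},u_k]$ with the coordinate subspace $\{u_0 = \cdots = u_4 = 0\}$. Since each $u_k$ is $\bT_\Lambda$-homogeneous, this subspace is $\bT_\Lambda$-stable, so the induced $\bT_\Lambda$-action on $\Spec\cR(\bP^2_4)$ is the restriction of the one on $\Spec\cR$ and agrees with the one of Proposition \ref{Cox-P^2_4}; thus the embedding is equivariant. There is no real obstacle in the argument; the only non-formal step is the visual identification of the first five generators of $\cI$ with the Pl\"ucker relations, which is routine bookkeeping from the explicit list.
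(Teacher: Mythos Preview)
Your proof is correct and follows exactly the approach the paper intends: the paper's own proof is the single line ``The next observation follows from \ref{Cox-P^2_4},'' and you have simply spelled out the bookkeeping behind that sentence---setting the $u_k$ to zero kills all generators of $\cI$ except the five Pl\"ucker trinomials, and the gradings in Propositions \ref{ideal-gens} and \ref{Cox-P^2_4} visibly coincide. Your parenthetical about the rescaling $e_i\leftrightarrow e_i/2$ between the two uses of $\Lambda$ (in Section~\ref{P^2_4} versus Section~\ref{section-torus}) is a fair observation, though the paper silently identifies them.
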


\subsection{Monomial valuations and the total coordinate
  ring}\label{sect-tot-coord}
In Section \ref{section_proofs} we will produce a GIT quotient $X$ of
$\Spec\cR$ and prove that it is smooth so that the resulting morphism
$\varphi: X\ra V/G$ is a resolution of singularities, see Theorem
\ref{thmGITquotient}. The aim of the present subsection is to relate
$\cR$ to $\cR(X)$, the total coordinate ring of the resolution $X$.

To simplify the notation we set $\cP:=\bC[V]^{\langle
  -I\rangle}=\cR(V/G)$. The ring $\cR$ is constructed as a subring of
the Laurent polynomial ring $\cP[t_0^{\pm 1},\dots,t_4^{\pm 1}]$ with
grading in a lattice $\Lambda\subset\bZ^5$ inherited from the ambient
polynomial ring, see \ref{def-CR-resolution}. In this construction
each variable $t_i$ is associated to the action of the symplectic
reflection $T_i$, \ref{equivariant}.

On the other hand the morphism $\overline{\Theta}:
\cR(X)\ra\cP[t_0^{\pm 1},\dots,t_4^{\pm 1}]$, defined in
\ref{triviality-extended}, is an embedding, see \ref{emmbedding}. The
grading of $\cR(X)$ is in $\Cl(X)$ which embeds via
\ref{class-sequence-1} into $\bZ^5$ so that $\Cl(X)=\Lambda$. Now,
however, $t_i$'s are variables associated to exceptional divisors
$E_i$ of the symplectic resolution $\varphi$, which by McKay
correspondence \ref{KaledinMcKay} are in relation with the conjugacy
classes of $T_i$'s. The composition of $\overline{\Theta}$ with
evaluation $ev_1: t_i\mapsto 1$ is $\varphi_*: \cR(X)\ra\cP$, see
\ref{composition-Theta-evaluation}. The restriction of the evaluation
$ev_1$ to $\cR$ will be called $\Phi:\cR\ra\cP$

The definition of both $\cR$ and $\overline{\Theta}(\cR(X))$ depends
on the meaning of $t_i$'s; the link is McKay correspondence. Thus in
order to relate these objects, following \cite{ReidMcKay} and
\cite{KaledinMcKay}, we will use monomial valuations. Namely, by
$\nu_i: (\cP)=\bC(V)^{\langle -I\rangle}\ra \bZ\cup\{\infty\}$ we
denote the monomial valuation on the field of fractions of $\cP$
associated to the action of $T_i$. By \cite{KaledinMcKay} we know that
$(\nu_i)_{|\bC(V)^G}=2\nu_{E_i}$, c.f.~\ref{cyclic-group-blow-up}.

Because of \ref{Coxringdecomposition} we have the following
decomposition into a sum of $\bC[V]^G$-modules of eigenfunctions of
the action of $Ab(G)$:
\begin{equation}\label{decomposition-2}
  \cP=\bigoplus_{\mu\in G^\vee}\bC[V]^G_\mu=
\bigoplus_{\mu\in G^\vee}\cP_{(\bar\mu(T_0),\dots,\bar\mu(T_4))}
\end{equation}
where $\bar\mu(T_i)=0$ if $\mu(T_i)=1$ and $\bar\mu(T_i)=1$ if
$\mu(T_i)=-1$. Note that this makes grading of $\cP$ in $\bZ_2^5$.
The grading on $\cP$ agrees with the $\bZ^5$ grading on $\cR$ and
$\cR(X)$ as well as with the valuations $\nu_i$.

\begin{lemma}\label{ZandZ2-grading}
  For every $f\in\cP$ if a monomial $f\cdot t_0^{d_0}\cdots t_4^{d_4}$
  is in either $\cR$ or $\cR(X)$ then
  $f\in\cP_{([d_0]_2,\dots,[d_4]_2)}$. If $f\in\cP_{(d_0,\dots,d_4)}$
  then for every $i=0,\dots, 4$ the valuation $\nu_i(f)$ has the same
  parity as $d_i$.  In particular, $\nu_r(\phi_{ij})=1$ if
  $r\in\{i,j\}$ and $\nu_r(\phi_{ij})=0$ if $r\not\in\{i,j\}$.
\end{lemma}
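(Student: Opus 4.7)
The plan is to prove the three assertions in reverse order: I will first establish the second claim (parity of $\nu_i$), deduce the third (values $\nu_r(\phi_{ij})$), and finally the first (relation between the $\bZ^5$-grading and the $Ab(G)$-grading of the coefficient).

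For the parity claim, the key observation is that any $f \in \cP_{(d_0,\dots,d_4)}$ is an eigenvector of $T_i$ with eigenvalue $(-1)^{d_i}$: indeed, $\cP = \bC[V]^{\langle -I\rangle}$ is acted upon by $Ab(G) = G/[G,G]$, and the decomposition \eqref{decomposition-2} is precisely into common eigenspaces of the $T_i$'s acting modulo $-I$. Since $T_i$ is a symplectic reflection of order $2$ with a $2$-dimensional fixed space, I can choose coordinates $y_1,\dots,y_4$ in which $T_i$ acts diagonally with weights $(0,0,1,1)$; in these coordinates the monomial valuation $\nu_i$ sends a monomial $y^\alpha$ to $\alpha_3+\alpha_4$, and the action of $T_i$ on $y^\alpha$ is multiplication by $(-1)^{\alpha_3+\alpha_4}$. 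Writing $f$ as a sum of such monomials and using that $f$ is a $T_i$-eigenvector with eigenvalue $(-1)^{d_i}$, every monomial appearing in $f$ has weight $\equiv d_i\pmod{2}$, hence so does their minimum $\nu_i(f)$.

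To deduce the third assertion, I combine the parity statement with the fact that $\phi_{ij}$ is quadratic. By Lemma \ref{eigenvectors}, $\phi_{ij}$ lies in the eigenspace $\cP_\mu$ where $\mu$ has a $1$ in positions $i,j$ and $0$ elsewhere, so $\nu_r(\phi_{ij})$ is odd iff $r\in\{i,j\}$. Since $\deg\phi_{ij}=2$ forces $\nu_r(\phi_{ij})\leq 2$, the only odd possibility is $1$ (giving the case $r\in\{i,j\}$), while the even possibilities are $0$ and $2$; to rule out $2$ when $r\notin\{i,j\}$, I verify by direct inspection of the formulas in \eqref{eq-eigenvectors} that in $T_r$-diagonal coordinates the form $\phi_{ij}$ always contains a monomial of weight $0$, i.e.\ a nonzero term in variables fixed by $T_r$.

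For the first assertion, I treat $\cR$ and $\cR(X)$ separately. For $\cR$, I use the multiplicative generating set from Definition \ref{def-CR-resolution}: a monomial of $\Lambda$-grade $(d_0,\dots,d_4)$ has the form $\prod(\phi_{ij}t_it_j)^{a_{ij}}\prod(t_k^{-2})^{b_k}$, whence $d_k=\sum_{k\in\{i,j\}}a_{ij}-2b_k$ and the $\cP$-coefficient $\prod\phi_{ij}^{a_{ij}}$ lies in $\cP_{([d_0]_2,\dots,[d_4]_2)}$ by the multiplicativity of characters and Lemma \ref{eigenvectors}; summing monomials of the same $\Lambda$-grade preserves this property. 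For $\cR(X)$, I appeal to Lemma \ref{composition-Theta-evaluation}: the composition $ev_1\circ\overline{\Theta}=\varphi_*$ is a morphism of graded rings where the grading on the target $\cP$ is in $\Cl(V/G)=Ab(G)=\bZ_2^5$, and the grading map $\Cl(X)=\Lambda\hookrightarrow\bZ^5\to\bZ_2^5=Ab(G)$ (discussed right after Lemma \ref{equivariant}) is precisely the mod-$2$ reduction and coincides with $\varphi_*$ on class groups by \eqref{class-sequence-1}. Thus the $\cP$-coefficient of any $\Lambda$-homogeneous element of grade $(d_0,\dots,d_4)$ of $\cR(X)$ lies in $\cP_{([d_0]_2,\dots,[d_4]_2)}$.

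The main obstacle is the parity step: one must verify that the monomial valuations $\nu_i$ on $\bC(V)^{\langle-I\rangle}$, defined intrinsically through the action of $T_i$ rather than through the explicit (non-diagonal) matrices \eqref{matrices}, are compatible with the $Ab(G)$-grading of $\cP$. Once this compatibility is in place, both the case-check for $\nu_r(\phi_{ij})$ and the gradings statement for $\cR$ and $\cR(X)$ follow cleanly from the generating set and from the push-forward description.
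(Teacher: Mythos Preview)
Your proof is correct and essentially matches the paper's: the paper also checks the first claim on generators of $\cR$ (respectively via the definition of $\Theta$ for $\cR(X)$), and obtains $\nu_r(\phi_{ij})$ directly from Definition~\ref{def-monomial-valuation} rather than via your parity-plus-degree-bound route, but the content is the same. One small slip: $Ab(G)\cong\bZ_2^4$, not $\bZ_2^5$; the indexing in \eqref{decomposition-2} is by $5$-tuples in $\bZ_2^5$ with even coordinate sum (since $T_0\cdots T_4=I$), so your mod-$2$ reduction argument is unaffected.
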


\begin{proof}
  For $f\cdot t_0^{d_0}\cdots t_4^{d_4}\in\cR$ it is enough to check
  the statement for generators of $\cR$, see \ref{def-CR-resolution}.
  If $f\cdot t_0^{d_0}\cdots t_4^{d_4}\in\cR(X)$ then the statement
  follows from the definition of $\Theta$, see \ref{triviality}. The
  last part follows directly from the definition
  \ref{def-monomial-valuation}.
\end{proof}

\begin{example}\label{pull-back-phi}
  Let $\overline{D}_{ij}$ be a divisor on $X$ which is strict
  transform via $\varphi^{-1}$ of the Weil divisor $D_{ij}$ on $V/G$
  associated to $\phi_{ij} \in \cR(V/G) = \cP$. Then the principal
  divisor on $X$ of the function $\phi^2_{ij} \in \bC[V]^G \subset
  \bC(X)$ satisfies the following equality: $\pdiv_X(\phi_{ij}^2) =
  2\overline{D}_{ij} + E_i + E_j$.  Thus, if $C_m$ is a general fiber
  of $\varphi_{|E_m}$ then $\overline{D}_{ij}\cdot C_m=1$ if
  $m\in\{i,j\}$ and $\overline{D}_{ij}\cdot C_m=0$ if
  $m\not\in\{i,j\}$.
\end{example}

More generally we have the following.

\begin{lemma}\label{valuation-strict-transform}
  Let $\overline{D}$ be a divisor in $X$ which is strict transform
  via $\varphi^{-1}$ of an effective Weil divisor $D$ on $V/G$. If
  $f_D\in\cR(V/G) = \cP$ is the element associated with $D$ then
  $\nu_i(f_D) = \overline{D}\cdot C_i$. Moreover, if
  $f_{\overline{D}}\in\cR(X)$ is the element associated with
  $\overline{D}$ then
  $$\overline{\Theta}(f_{\overline{D}})=f_D\cdot
  t_0^{\nu_0(f_D)}\cdots t_4^{\nu_4(f_D)}$$
\end{lemma}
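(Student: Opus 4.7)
The ``moreover'' assertion is a direct consequence of the first equality together with Corollary \ref{emmbedding-1}, which already gives $\overline{\Theta}(f_{\overline{D}})=f_D\cdot\prod_i t_i^{(\overline{D}\cdot C_i)}$. The plan is therefore to focus on the numerical identity $\nu_i(f_D)=\overline{D}\cdot C_i$, and the strategy is to pass to $f_D^2$ so that Kaledin's comparison of monomial valuations with divisorial ones becomes available.

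First, since $Ab(G)=\bZ_2^4$ is $2$-torsion, the character $\mu$ with $f_D\in\cP_\mu$ satisfies $\mu^2=1$, and hence $f_D^2\in\cP_0=\bC[V]^G=\bC[V/G]\subset\bC(X)$. Combining the valuation identity $\nu_i(f_D^2)=2\nu_i(f_D)$ with the Kaledin relation $(\nu_i)_{|\bC(V)^G}=2\nu_{E_i}$ recalled just before Lemma \ref{ZandZ2-grading} yields
\begin{equation*}
\nu_i(f_D)=\tfrac{1}{2}\nu_i(f_D^2)=\nu_{E_i}(f_D^2).
\end{equation*}

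Second, I would write down the principal divisor of the regular function $f_D^2\in\bC[V/G]$ pulled back to $X$. On the open subset where $\varphi$ is an isomorphism, $\pdiv(f_D^2)=2D$ by the defining Cox-ring association between $f_D$ and $D$ (the $G$-invariant zero-scheme of $f_D$ on $V$ descends to $D$ on $V/G$, and squaring removes the character), while along the exceptional divisors $E_j$ the order of vanishing is $n_j:=\nu_{E_j}(f_D^2)$ by definition. This gives the effective decomposition
\begin{equation*}
\pdiv_X(f_D^2)=2\overline{D}+\sum_{j=0}^4 n_j E_j,
\end{equation*}
directly generalizing the special case of $\phi_{ij}$ computed in Example \ref{pull-back-phi}.

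Third, I would intersect with $C_i$. By the McKay correspondence \ref{KaledinMcKay}, the five exceptional divisors $E_0,\dots,E_4$ correspond to the five conjugacy classes $\pm T_i$ of order-$2$ symplectic reflections; the remaining non-central conjugacy classes $\pm R_{ij}$ do not contribute exceptional divisors since $R_{ij}^2=-I$ forces $V^{R_{ij}}=\{0\}$ (Lemma \ref{matrices-properties}). Each $T_i$-class therefore contributes a single $A_1$ block to the Cartan-type intersection matrix of \cite{Wierzba}, giving $E_j\cdot C_i=-2\delta_{ij}$. Intersecting the principal divisor above with $C_i$ then yields
\begin{equation*}
0=2(\overline{D}\cdot C_i)+\sum_j n_j (E_j\cdot C_i)=2(\overline{D}\cdot C_i)-2n_i,
\end{equation*}
so $\overline{D}\cdot C_i=n_i=\nu_i(f_D)$, and Corollary \ref{emmbedding-1} finishes the proof.

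The step I expect to require the most care is justifying the coefficient $2$ in front of $\overline{D}$ in $\pdiv_X(f_D^2)$: it requires unpacking precisely what ``$f_D$ is associated with $D$'' means in the graded-piece presentation $\cR(V/G)=\bigoplus_\mu \bC[V]^G_\mu$ from Proposition \ref{Cl_of_quotient} and tracking how the $G$-invariant zero-scheme of $f_D$ on $V$ descends to $V/G$ after squaring. Once this is in place, the rest is intersection-theoretic bookkeeping mirroring Example \ref{pull-back-phi}.
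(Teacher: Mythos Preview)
Your argument is correct and follows essentially the same route as the paper's own proof: pass to $f_D^2\in\bC[V]^G$, write $\pdiv_X(f_D^2)=2\overline{D}+\sum_j\nu_{E_j}(f_D^2)E_j$, intersect with $C_i$ using $E_j\cdot C_i=-2\delta_{ij}$, and invoke $(\nu_i)_{|\bC(V)^G}=2\nu_{E_i}$ together with Corollary~\ref{emmbedding-1}. Your additional justifications (why $f_D^2$ is $G$-invariant, why only the $T_i$'s give exceptional divisors) are fine but already established elsewhere in the paper.
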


\begin{proof}
  Note that $f_D^2\in\bC[V]^G\subset\bC(X)$ and we have
  $$\pdiv_X(f_D^2)=2\overline{D}+\nu_{E_0}(f_D^2)E_0+\cdots+
  \nu_{E_4}(f_D^2)E_4$$ Since $\pdiv_X(f_D^2)\cdot C_i=0$ and
  $(\nu_i)_{|\bC(X)}=2\nu_{E_i}$ we get
  $$2\overline{D}\cdot C_i= -\left(\nu_0(f_D)E_0+\cdots+
    \nu_4(f_D)E_4\right)\cdot C_i$$ and the first claim follows
  because $D_i\cdot C_i=-2$ and $D_j\cdot C_i=0$ if $j\ne i$. The
  second statement follows from \ref{emmbedding-1}.
\end{proof}

\begin{corollary}\label{inclusion}
  In the notation introduced above the following holds:
  $\overline{\Theta}(f_{E_i})= t_i^{-2}$ and
  $\overline{\Theta}(f_{\overline{D}_{ij}})= \phi_{ij}t_it_j$.
  Therefore $\cR\subseteq\overline{\Theta}(\cR(X))$.
\end{corollary}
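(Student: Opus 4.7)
The plan is to verify the two explicit identities and then conclude the inclusion from Definition~\ref{def-CR-resolution}, where $\cR$ is generated as a $\bC$-algebra precisely by the functions $\phi_{ij}t_it_j$ and $t_i^{-2}$. Everything needed to carry out the two computations has been prepared in the preceding Lemma~\ref{valuation-strict-transform}, Example~\ref{pull-back-phi}, and the basic exact sequence (\ref{class-sequence-1}).

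For the identity $\overline{\Theta}(f_{\overline{D}_{ij}}) = \phi_{ij}t_it_j$ I would apply Lemma~\ref{valuation-strict-transform} with $D = D_{ij}$ and $f_D = \phi_{ij}$. The formula gives $\overline{\Theta}(f_{\overline{D}_{ij}}) = \phi_{ij}\cdot \prod_m t_m^{\nu_m(\phi_{ij})}$; the exponents are read off from the last part of Lemma~\ref{ZandZ2-grading}, which states $\nu_m(\phi_{ij}) = 1$ for $m\in\{i,j\}$ and $\nu_m(\phi_{ij}) = 0$ otherwise. (Alternatively, Example~\ref{pull-back-phi} computes the intersection numbers $\overline{D}_{ij}\cdot C_m$ directly, recovering the same exponents through Corollary~\ref{emmbedding-1}.) This gives the claimed formula at once.

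For the identity $\overline{\Theta}(f_{E_i}) = t_i^{-2}$, I would observe that under the identification (\ref{sections-basis}) the canonical defining section $f_{E_i}\in \Gamma(X,\cO_X(E_i))$ corresponds to the constant $1\in\bC(X)$, since $\pdiv(1)+E_i = E_i\geq 0$. Pushing forward, the $\cP$-component of $\overline{\Theta}(f_{E_i})$ is simply $1$, so it only remains to compute the monomial $\chi^{[E_i]}$ under the embedding $\Cl(X)\hookrightarrow\bigoplus_j \bZ[L_j]$ from diagram (\ref{class-sequence-1}). That embedding sends $[E_i]\mapsto \sum_j (E_i\cdot C_j)[L_j]$; since every $T_i$ is an involution, the transverse codimension-two singularities of $V/G$ along $\varphi(E_i)$ are of type $A_1$ and the intersection matrix reduces to $E_i\cdot C_j = -2\delta_{ij}$. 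Hence $[E_i]\mapsto -2[L_i]$ and $\chi^{[E_i]} = t_i^{-2}$.

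The inclusion $\cR\subseteq\overline{\Theta}(\cR(X))$ then follows because by Definition~\ref{def-CR-resolution} the ring $\cR$ is generated by the two families $\phi_{ij}t_it_j$ and $t_i^{-2}$, both of which lie in the image of $\overline{\Theta}$. The only nontrivial step is the intersection-matrix computation for $E_i\cdot C_j$; I expect this to be the main conceptual obstacle, but it is controlled by Kaledin's McKay correspondence (Theorem~\ref{KaledinMcKay}) together with the structure theorem for exceptional configurations of symplectic resolutions cited after it.
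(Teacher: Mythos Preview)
Your proof is correct and follows exactly the route the paper intends: the corollary is stated without proof because both identities drop out of Lemma~\ref{valuation-strict-transform} (together with Lemma~\ref{ZandZ2-grading} or Example~\ref{pull-back-phi}) and the definition of~$\overline{\Theta}$, precisely as you explain. Your only hesitation, the intersection numbers $E_i\cdot C_j=-2\delta_{ij}$, is not an obstacle here---it is already used in the proof of Lemma~\ref{valuation-strict-transform} and is a direct consequence of the fact that each $\varphi(E_i)$ is a family of $A_1$ surface singularities.
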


The following result has been anticipated in the cyclic quotient case,
see \ref{eq-toric-push-forward-2} and \ref{ex-A1resolution}.

\begin{proposition}\label{grading-valuations-C(R)}
  The image via $\varphi_*$ of the graded pieces of $\cR(X)$ is
  determined by valuations $\nu_i$ in the following way:
\begin{equation}\label{eq-grading-valuations-C(R)}
  \varphi_*(\cR(X)_{(d_0,\dots,d_4)})=
  \left\{f\in\cP_{([d_0]_2,\dots,[d_4]_2)}:\ \forall_i
    \ \nu_i(f) \geq d_i\right\}
\end{equation}
\end{proposition}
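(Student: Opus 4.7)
The plan is to prove the equality by double inclusion, using the identification of elements of $\cR(X)$ with rational functions on $X$ whose associated divisor is effective, combined with the valuation identity $\nu_i(f_D)=\overline{D}\cdot C_i$ from Lemma \ref{valuation-strict-transform}. The containment of $\varphi_*(\cR(X)_{(d_0,\dots,d_4)})$ inside the eigenspace $\cP_{([d_0]_2,\dots,[d_4]_2)}$ is already delivered by Lemma \ref{ZandZ2-grading}, so the real content is the valuation bound. The essential geometric input I would use throughout is that $E_j\cdot C_i=-2\delta_{ij}$: this reflects the $A_1$ Du Val model of the codimension-2 exceptional configuration over a generic point of $\varphi(E_i)$ (each reflection $T_i$ has order 2), which comes from \cite[Thm.~1.3]{Wierzba} and \cite[Thm.~4.1]{AW2014}.

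For the inclusion $\subseteq$, I would take a nonzero $f\in\cR(X)_{(d_0,\dots,d_4)}$, realized as a section whose associated effective divisor is $D'=\pdiv_X(f)+D$ for a reference $D$ of class $(d_0,\dots,d_4)\in\bigoplus_i\bZ[L_i]$. Decomposing $D'=\overline{D'}+\sum_jb_jE_j$ into non-exceptional and exceptional parts (with $b_j\geq 0$), the element $\varphi_*(f)\in\cP$ is the one associated with the pushed-forward Weil divisor $\varphi_*(\overline{D'})$ on $V/G$, whose strict transform back to $X$ is $\overline{D'}$ itself. Lemma \ref{valuation-strict-transform} then computes $\nu_i(\varphi_*(f))=\overline{D'}\cdot C_i=D'\cdot C_i-b_i(E_i\cdot C_i)=d_i+2b_i\geq d_i$, which is precisely the desired bound.

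For the inclusion $\supseteq$, I would start with $f\in\cP_{([d_0]_2,\dots,[d_4]_2)}$ satisfying $\nu_i(f)\geq d_i$ for every $i$, corresponding to an effective Weil divisor $D_f$ on $V/G$. Its strict transform $\overline{D_f}\subset X$ satisfies $\overline{D_f}\cdot C_i=\nu_i(f)$ by Lemma \ref{valuation-strict-transform}, and by the parity statement of Lemma \ref{ZandZ2-grading} each $c_i:=(\nu_i(f)-d_i)/2$ is a nonnegative integer. I then set $\tilde{D}:=\overline{D_f}+\sum_ic_iE_i$, an effective divisor on $X$ with $\tilde{D}\cdot C_i=\nu_i(f)-2c_i=d_i$, so its class in $\Cl(X)\hookrightarrow\bigoplus_i\bZ[L_i]$ is exactly $(d_0,\dots,d_4)$. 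The associated section $f_{\tilde{D}}\in\cR(X)_{(d_0,\dots,d_4)}$ pushes down to the element of $\cP$ associated with $\varphi_*(\tilde{D})=D_f$, hence to a nonzero scalar multiple of $f$; after rescaling we obtain $f\in\varphi_*(\cR(X)_{(d_0,\dots,d_4)})$.

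The subtlest point will be bookkeeping the correspondences between three incarnations of a section: a rational function on $X$ whose principal divisor plus a reference divisor is effective, a $G$-semi-invariant polynomial in $\cP_\mu\subset\bC[V]$, and an effective Weil divisor on $V/G$. One must verify that the pushforward of Cox rings from Proposition \ref{push-forward-Cox-rings} translates consistently across these descriptions, and that the resulting element of $\cP$ has the monomial valuation computed by the intersection number of the strict transform with $C_i$, as packaged in Lemma \ref{valuation-strict-transform} together with the identity $\nu_i|_{\bC(V)^G}=2\nu_{E_i}$. Once this translation is nailed down, the remainder is the intersection computation above, and the statement becomes the global non-toric analogue of Corollary \ref{toric-push-forward}.
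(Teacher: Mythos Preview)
Your proof is correct and follows essentially the same route as the paper's own argument: both directions rest on decomposing an effective divisor on $X$ into its strict-transform part plus a nonnegative combination of the $E_i$'s, then invoking Lemma~\ref{valuation-strict-transform} together with $E_j\cdot C_i=-2\delta_{ij}$ and the parity from Lemma~\ref{ZandZ2-grading}. The only cosmetic difference is that the paper writes the lift multiplicatively as $f_{\overline{D}}\cdot f_{E_0}^{a_0}\cdots f_{E_4}^{a_4}$ while you phrase it additively on the level of divisors; your added remark about the $\bC^*$-rescaling is a small extra care the paper leaves implicit.
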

\begin{proof}
  We use the notation from Lemma \ref{valuation-strict-transform}. If
  $f=f_D$ and $d_i\leq\nu_i(f)$ then the numbers $a_i=(\nu_i(f)-d_i)/2$
  are non-negative integers because of \ref{ZandZ2-grading} and
  $$f_M:=f_{\overline{D}}\cdot f_{E_0}^{a_0}\cdots f_{E_4}^{a_4}\in
  \cR(X)_{(d_0,\dots,d_4)}$$ is the element which is mapped to $f_D$ via
  $\varphi_*$.

  On the other hand, given an effective divisor $M$ on $X$ we can
  write it as $M=\overline{D}+\sum a_iE_i$ where $\overline{D}$ is the
  strict transform of $D:=\varphi_*(M)$ and $a_i$ are non-negative
  integers. If $f_M=f_{\overline{D}}\cdot f_{E_0}^{a_0}\cdots
  f_{E_4}^{a_4}$ is in $ \cR(X)_{(d_0,\dots,d_4)}$ then by the same
  arguments $\nu_i(\varphi_*(f_M))=\nu_i(f_D)=d_i+2a_i$.
\end{proof}

%-------------------------------------------------------------------%

\section{GIT quotients of $\Spec \cR$}\label{section_proofs}

We study linearizations and corresponding GIT quotients of $\Spec \cR$. For a chosen one we prove its smoothness and this way we obtain an explicit description of a resolution of~$V/G$. In section~\ref{geom} we will use these results to show how to modify this resolution to obtain all other ones.

\subsection{Linearization, stability and isotropy}\label{section_irrelevant_ideal}

To construct a GIT quotient of $\Spec \cR$ we need to choose a suitable linearization of the trivial line bundle. It will be represented by a character $\chi^u \colon \bT_\Lambda \ra \bC^*$ of the 5-dimensional torus.
We investigate the sets of stable and semi-stable points of $\Spec \cR$ with respect to~$\chi$. In this section we explain how to check whether~$\chi^u$ and these sets have properties needed to have a good description of the quotient, i.e. satisfy condition~\ref{condition_good_lin}. Note that we do not compute the irrelevant ideal, i.e. the ideal of the closed set of unstable points, explicitly -- we prefer to deal with the set of semi-stable points using a description based on toric geometry, as explained below. The idea, similar as for the 2-dimensional quotients in~\cite[Sect.~4]{DontenSurfaces}, is to look at the embedding $$\Spec \cR \hookrightarrow \Spec \bC[w_{ij}, u_k: k=0,\dots,4, 0\leq i<j\leq 4] \simeq \bC^{15}$$ such that $\bT_\Lambda$ is a subtorus of the big torus $(\bC^*)^{15}$ of the affine space, as described in~\ref{section-torus}. Then the set of semi-stable points can be presented as the intersection of~$\Spec \cR$ with certain orbits of the big torus, see~\ref{lemma_stab_toric} and Section~\ref{section_stable_points}.

We start from a few observations in a slightly more general setting. Let $Z$ be an affine subvariety of $A \simeq \bC^r$, invariant under a (diagonal) action of a subtorus~$\bT$ of $\bT_A  \simeq (\bC^*)^r$. By $M_{\bT}$ and $\hM$ we denote monomial lattices of $\bT$ and $\bT_A$ respectively, and by $\sigma^+$ and $\hs^+$ their positive orthants. Then both $\bC[Z]$ and $\bC[A]$ have a grading by $M_{\bT}$ associated with the action of $\bT$. To analyze semi-stability we use the notion of orbit cones, see~\cite[Def.~2.1]{BerchtoldHausenGIT}.

\begin{definition}
The orbit cone $\omega_{\bT}(z) \subset M_{\bT} \otimes \bR$ of $z \in Z$ is a convex (polyhedral) cone generated by $$\{u \in M_{\bT} \colon \exists f \in \bC[Z]_u\: f(z) \neq 0\}$$
where $\bC[Z]_u$ denotes the graded piece in degree~$u$ of~$\bC[Z]$.
\end{definition}

That is, to prove that $z$ is semi-stable with respect to $\chi^u$ it is sufficient to check that $u \in \omega_{\bT}(z)$, hence we want to describe the orbit cones for considered action. We rely on a basic observation which follows directly from the definition of stability, see e.g.~\cite[8.1]{DolgachevLectInvTh}.

\begin{lemma}\label{lemma_stab_toric}
Fix a character~$\chi^u$, $u \in M_{\bT}$, which gives linearizations of actions of~$\,\bT$ both on $Z$ and on $A$. Then the sets of stable and semi-stable points with respect to~$\chi^u$ satisfy $Z^{ss} = Z \cap A^{ss}$ and $Z \cap A^s \subseteq Z^s$.
\end{lemma}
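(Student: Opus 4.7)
The plan is to deduce the statement directly from the definition of (semi)stability as the non-vanishing of a homogeneous section, combined with the fact that the closed embedding $Z \hookrightarrow A$ is $\bT$-equivariant and its associated surjection $\bC[A] \twoheadrightarrow \bC[Z]$ is a homomorphism of $M_{\bT}$-graded rings. Since the linearization $\chi^u$ gives compatible gradings on $\bC[A]$ and $\bC[Z]$, both notions of semi-stability are tested by homogeneous elements of the same degree $nu$.

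For the equality $Z^{ss} = Z \cap A^{ss}$, I would argue both inclusions via this restriction map. If $z \in Z \cap A^{ss}$, then there exists $n>0$ and a homogeneous $\tilde f \in \bC[A]_{nu}$ with $\tilde f(z) \neq 0$; its restriction $f \in \bC[Z]_{nu}$ still satisfies $f(z) \neq 0$, so $z \in Z^{ss}$. Conversely, if $z \in Z^{ss}$ witnessed by $f \in \bC[Z]_{nu}$, then since the restriction $\bC[A]_{nu} \twoheadrightarrow \bC[Z]_{nu}$ is surjective (the full restriction is surjective and preserves the grading) we can lift $f$ to $\tilde f \in \bC[A]_{nu}$ with $\tilde f(z) = f(z) \neq 0$, showing $z \in A^{ss}$. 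Note that affineness of the non-vanishing locus is automatic since both $A$ and $Z$ are affine.

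For the inclusion $Z \cap A^s \subseteq Z^s$, I would use that a semi-stable point is stable if and only if its orbit is closed in the semi-stable locus and its stabilizer is finite. The stabilizer $\bT_z$ is the same whether $z$ is regarded as a point in $A$ or in $Z$, since $\bT$ acts through the same automorphisms. For closedness of the orbit in $Z^{ss}$: given $z \in A^s$, the orbit $\bT \cdot z$ is closed in $A^{ss}$; then $\bT \cdot z = (\bT \cdot z) \cap Z^{ss}$ is closed in $Z^{ss} = Z \cap A^{ss}$ for the subspace topology (which is the correct one on the GIT side). Combined with $z \in Z^{ss}$ proved above, this gives $z \in Z^s$.

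There is no genuine obstacle here; the lemma is essentially a formal consequence of the definitions together with the elementary fact that a graded surjection of rings restricts to a surjection on each graded piece. The only point requiring care is to use definitions of stability consistent with the orbit-cone formulation implicit in the preceding paragraph (as in \cite{BerchtoldHausenGIT} or Dolgachev's lecture notes \cite{DolgachevLectInvTh}), which I would cite rather than redevelop.
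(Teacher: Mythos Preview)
Your argument is correct and is precisely the unpacking of what the paper means by ``follows directly from the definition of stability, see e.g.~\cite[8.1]{DolgachevLectInvTh}''; the paper gives no further proof beyond that remark. Your use of the graded surjection $\bC[A]\twoheadrightarrow\bC[Z]$ for the semi-stable equality and the closed-orbit-plus-finite-stabilizer criterion for the stable inclusion is exactly the standard verification the citation points to.
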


The first part can be rephrased in terms of orbit cones, cf.~\cite[2.5]{BerchtoldHausenGIT}.
\begin{corollary}
The orbit cone for $z \in Z$ and the action of~$\,\bT$ on~$Z$ is equal to the orbit cone of $z$ and the action on~$A$.
\end{corollary}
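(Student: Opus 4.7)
The claim is that for $z\in Z\subset A$, the orbit cone does not depend on whether we regard $z$ as a point of $Z$ or of $A$. The plan is to exploit the closed immersion $Z\hookrightarrow A$ directly: it provides a graded surjection of coordinate rings, and on the level of fibers at a single point $z$ this surjection carries all the information needed to match the two generating sets of the cones.

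First I would set up the algebra. Because $Z\subseteq A$ is a closed $\bT$-invariant subvariety, the restriction map $\pi\colon \bC[A]\twoheadrightarrow \bC[Z]$ is surjective and $\bT$-equivariant. The grading of $\bC[A]$ by $\hM$ descends, via the projection $\hM\twoheadrightarrow M_{\bT}$ induced by the inclusion $\bT\hookrightarrow\bT_A$, to a grading by $M_{\bT}$; the grading of $\bC[Z]$ by $M_{\bT}$ is simply inherited through $\pi$. In particular $\pi$ is a graded map, so for each $u\in M_{\bT}$ it restricts to a surjection $\pi_u\colon\bC[A]_u\twoheadrightarrow\bC[Z]_u$.

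Next I would compare generating sets. Denote by $\Omega_Z(z)$ (resp. $\Omega_A(z)$) the set
\[
\{u\in M_{\bT}:\ \exists\, f\in\bC[Z]_u\ \text{with}\ f(z)\neq 0\}
\]
(resp. the analogous set for $\bC[A]_u$). For the inclusion $\Omega_A(z)\subseteq \Omega_Z(z)$, given $F\in\bC[A]_u$ with $F(z)\neq 0$, take $f:=\pi_u(F)\in\bC[Z]_u$; since $z\in Z$, evaluation commutes with restriction, so $f(z)=F(z)\neq 0$. For the reverse inclusion $\Omega_Z(z)\subseteq\Omega_A(z)$, given $f\in\bC[Z]_u$ with $f(z)\neq 0$, use surjectivity of $\pi_u$ to lift $f$ to some $F\in\bC[A]_u$; again $F(z)=f(z)\neq 0$.

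Finally I would conclude by noting that the orbit cones are by definition the convex polyhedral cones generated by these two sets, so the equality $\Omega_Z(z)=\Omega_A(z)$ immediately gives equality of the cones. There is no real obstacle here: the only point to be careful about is that the two gradings being compared are compatible, which is exactly ensured by the fact that $\bT$ acts on $Z$ as the restriction of its action on $A$ via the subtorus inclusion. (Note also that this gives, combined with Lemma \ref{lemma_stab_toric}, the semi-stability criterion $z\in Z^{ss}(\chi^u)\iff u\in \omega_{\bT}(z)$ used throughout the section.)
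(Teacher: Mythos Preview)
Your proof is correct and, at its core, is the same elementary observation the paper relies on: the graded surjection $\bC[A]\twoheadrightarrow\bC[Z]$ means that a homogeneous element of degree $u$ not vanishing at $z$ exists in one ring if and only if it exists in the other. The only difference in presentation is that the paper does not spell this out; it simply declares the corollary to be a rephrasing of the equality $Z^{ss}=Z\cap A^{ss}$ from Lemma~\ref{lemma_stab_toric} (whose proof is itself deferred to the definition of semi-stability), whereas you give the argument directly without invoking that lemma.
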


Orbit cones for the affine space~$A$ are easy to describe. Let $\pi \colon \hM \ra M_{\bT}$ be the homomorphism of lattices corresponding to $\bT \subseteq \bT_A$; we will assume that it is given by the matrix~$U$ of weights of the action of~$\bT$ on $A$. By $\gamma_z$ we denote the face of~$\hs^+$ generated by monomials non-vanishing on $\bT_A \cdot z \subset A$. The proof of the following statement is straightforward.

\begin{lemma}\label{lemma_orbit_cones_desc}
Orbit cones for the action of $\bT$ on~$A$, hence also on $Z$, are images of faces of~$\hs^+$ under~$\pi$. More precisely, $\omega_{\bT}(z) = \pi(\gamma_z)$.
\end{lemma}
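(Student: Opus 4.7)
The plan is to unwind the definitions on both sides and check directly that they agree. The key observation, which I would record first, is that the $M_\bT$-grading on $\bC[A]$ is a coarsening of its finer $\hM$-grading via $\pi$: each monomial $x^{\hat{u}}$ is a $\bT_A$-eigenvector of weight $\hat{u}$, hence a $\bT$-eigenvector of weight $\pi(\hat{u})$, so
\[
\bC[A]_u \;=\; \bigoplus_{\pi(\hat{u})=u} \bC\cdot x^{\hat{u}}.
\]
Once this grading compatibility is in hand, the rest is essentially bookkeeping about which coordinates of $z$ vanish.

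Identifying $\hM$ with $\bZ^r$ so that $\hs^+$ is the standard positive orthant spanned by $\hat{e}_1,\dots,\hat{e}_r$ (the weights of the coordinate functions $x_1,\dots,x_r$), every face of $\hs^+$ is spanned by some subset $\{\hat{e}_i : i \in I\}$. For a given $z \in A$, a monomial $x^{\hat{u}}$ is non-zero at $z$ precisely when its support avoids the vanishing coordinates, so $\gamma_z$ is the face corresponding to $I_z := \{i : x_i(z) \neq 0\}$. With these descriptions I would verify both inclusions of $\omega_\bT(z) = \pi(\gamma_z)$. For $\pi(\gamma_z) \subseteq \omega_\bT(z)$: each generator $\pi(\hat{e}_i)$ with $i \in I_z$ is witnessed by the monomial $x_i$ itself, which lies in $\bC[A]_{\pi(\hat{e}_i)}$ and does not vanish at $z$; convexity of both cones finishes this direction. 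For $\omega_\bT(z) \subseteq \pi(\gamma_z)$: it suffices to check generators, so pick $u$ such that some $f \in \bC[A]_u$ has $f(z)\ne 0$; decomposing $f$ into its monomial summands (all with $\bT_A$-weight in $\pi^{-1}(u)$), at least one monomial $x^{\hat{u}}$ must satisfy $x^{\hat{u}}(z) \neq 0$, which forces $\hat{u} \in \gamma_z$ and therefore $u = \pi(\hat{u}) \in \pi(\gamma_z)$.

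I do not anticipate any real obstacle; the only thing to be careful about is keeping the two compatible gradings straight, and using that orbit cones are determined by their generating sets (so one only needs to check generators, not general convex combinations). Finally, the clause ``hence also on $Z$'' in the statement is not an additional step: it is immediate from the corollary preceding the lemma, which identifies $\omega_\bT(z)$ computed inside $\bC[Z]$ with the one computed inside $\bC[A]$ for any $z \in Z$, and the latter we have just shown to equal $\pi(\gamma_z)$.
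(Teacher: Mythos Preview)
Your argument is correct and is precisely the straightforward verification the authors have in mind; the paper does not give a proof of this lemma at all, stating only that ``the proof of the following statement is straightforward.'' Your unwinding of the two gradings and the two inclusions is the natural way to justify it.
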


\begin{corollary}\label{corollary_semistability_test}
A point $z$ is semi-stable with respect to the $\bT$-action on $Z$ linearized by~$\chi^u$ if and only if $u \in \pi(\gamma_z)$ (see also~\cite[Lem.~2.7]{BerchtoldHausenGIT}).
\end{corollary}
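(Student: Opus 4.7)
The plan is to combine the three preceding results in a straightforward chain, so this will be a short argument rather than a computation. First I would reduce the question to the ambient affine space. By Lemma~\ref{lemma_stab_toric}, a point $z \in Z$ lies in $Z^{ss}$ with respect to the linearization $\chi^u$ precisely when it lies in $A^{ss}$ for the induced linearization on $A$; thus it suffices to characterize semistability on $A$.

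Next I would invoke the orbit cone criterion. The comment following the definition of $\omega_{\bT}(z)$ gives one direction: semistability of $z$ (on either $Z$ or $A$) follows from $u \in \omega_{\bT}(z)$, because any homogeneous $f \in \bC[Z]_u$ (resp.\ $\bC[A]_u$) with $f(z) \neq 0$ is, by definition, a $\chi^u$-semi-invariant not vanishing at $z$. For the converse, if $z$ is semistable with respect to $\chi^u$, then by definition there exists some $m > 0$ and a $\chi^{mu}$-semi-invariant global section not vanishing at $z$, i.e.\ an element of $\bC[A]_{mu}$ non-vanishing at $z$; this forces $mu \in \omega_{\bT}(z)$, and since $\omega_{\bT}(z)$ is a convex cone, $u$ itself lies in $\omega_{\bT}(z)$. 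Hence semistability with respect to $\chi^u$ is equivalent to $u \in \omega_{\bT}(z)$.

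Finally I would apply Lemma~\ref{lemma_orbit_cones_desc}, which identifies $\omega_{\bT}(z) = \pi(\gamma_z)$, to conclude that $z \in Z^{ss}$ if and only if $u \in \pi(\gamma_z)$. I do not anticipate any real obstacle; the only delicate point is the converse direction of the orbit cone criterion, where one must pass from a semi-invariant of weight $mu$ to the statement $u \in \omega_{\bT}(z)$ via convexity of the cone. This is standard in the toric GIT literature (and is exactly \cite[Lem.~2.7]{BerchtoldHausenGIT} quoted in the statement), so the proof amounts to a clean assembly of \ref{lemma_stab_toric} and \ref{lemma_orbit_cones_desc}.
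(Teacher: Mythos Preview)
Your proposal is correct and is exactly what the paper intends: the corollary is stated there without its own proof, as the immediate combination of the orbit-cone semistability criterion (the remark after the definition of $\omega_{\bT}(z)$, together with the cited \cite[Lem.~2.7]{BerchtoldHausenGIT}) and Lemma~\ref{lemma_orbit_cones_desc}. Your reduction to $A$ via Lemma~\ref{lemma_stab_toric} is already absorbed into the paper's Corollary preceding Lemma~\ref{lemma_orbit_cones_desc}, so the assembly is the same.
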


The next lemma follows from the fact that the actions of $\bT$ and $\bT_A$ on $A$ commute (or for semi-stability from~\ref{lemma_orbit_cones_desc}), cf.~\cite[2.5]{BerchtoldHausenGIT}.

\begin{lemma}\label{lemma_stable_orbits}
Stability, semi-stability and isotropy groups of the action of~$\,\bT$ on~$A$ (hence also semi-stability and isotropy groups of the action on $Z$) are invariants of~$\,\bT_A$, i.e.~are properties of whole $\bT_A$-orbits.
\end{lemma}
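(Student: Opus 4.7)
The plan is to reduce each of the three claims about the $\bT$-action on $A$ to the single structural fact that the actions of $\bT$ and $\bT_A$ on $A$ commute, which is automatic since $\bT \subseteq \bT_A$ and $\bT_A$ is abelian. Once these are established for $A$, the corresponding statements for $Z$ follow from Lemma \ref{lemma_stab_toric} together with the trivial observation that for $z \in Z \subset A$ the isotropy of $z$ in $\bT$ is the same whether $z$ is regarded as a point of $Z$ or of $A$.

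For the isotropy part, I would argue directly: suppose $z' = t \cdot z$ with $t \in \bT_A$ and $s \in \bT$. Then
\[
s \cdot z' \;=\; s \cdot t \cdot z \;=\; t \cdot s \cdot z,
\]
so $s \cdot z' = z'$ if and only if $s \cdot z = z$, proving that $\bT_{z} = \bT_{z'}$.

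For semi-stability I would invoke the orbit cone description of Lemma \ref{lemma_orbit_cones_desc}: $\omega_\bT(z) = \pi(\gamma_z)$, where $\gamma_z$ is the face of $\hs^+$ generated by those coordinate monomials non-vanishing on $\bT_A \cdot z$. By construction $\gamma_z$ depends only on the $\bT_A$-orbit of $z$, so $\omega_\bT(z) = \omega_\bT(z')$ whenever $z, z'$ lie in the same $\bT_A$-orbit; Corollary \ref{corollary_semistability_test} then shows that semi-stability with respect to any linearizing character $\chi^u$ is a $\bT_A$-invariant property. In particular the semi-stable locus $A^{ss}$ is a union of $\bT_A$-orbits.

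For stability, I would use the usual characterization that $z \in A^s$ precisely when $z \in A^{ss}$, the stabilizer $\bT_z$ is finite, and the $\bT$-orbit $\bT \cdot z$ is closed in $A^{ss}$. The first two conditions have just been shown to be $\bT_A$-invariant; for the closed-orbit condition, observe that multiplication by $t \in \bT_A$ is a $\bT$-equivariant automorphism of $A$ (since the two actions commute) preserving $A^{ss}$, and it carries $\bT \cdot z$ isomorphically onto $\bT \cdot (t z)$. Hence closedness in $A^{ss}$ is preserved. The statements for $Z$ are then immediate from $Z^{ss} = Z \cap A^{ss}$ (Lemma \ref{lemma_stab_toric}) and the identification of isotropy groups inside $Z$ and $A$. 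The only place that needs any care is to remember that ``closed orbit'' in the definition of stability is meant inside the semi-stable locus rather than in all of $A$, but since $A^{ss}$ is itself $\bT_A$-stable the $\bT_A$-translation argument applies unchanged; this is really the only potential obstacle in an otherwise routine verification.
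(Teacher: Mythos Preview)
Your proof is correct and follows essentially the same approach as the paper: the paper's proof is a one-line remark that the claim follows from the fact that the actions of $\bT$ and $\bT_A$ on $A$ commute (invoking Lemma~\ref{lemma_orbit_cones_desc} for semi-stability), and you have simply written out this argument in full detail. Your careful treatment of the closed-orbit condition for stability is more explicit than the paper's, but the underlying idea is identical.
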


These observations are very useful in algorithms dealing with sets of semi-stable points, see~\cite{KeicherGITFan}.
Computations concerning stability are more subtle because of the orbit closedness condition: it may happen that a point is stable under the action on~$Z$, but not on~$A$. However, it turns out that for our purposes it is sufficient to check whether a point of $Z$ is in $A^s$ -- by~\ref{lemma_stab_toric} such points are stable in~$Z$.

\begin{lemma}\label{lemma_check_stability}
Consider a $\bT$-action on $A$ linearized by~$\chi^u$, and assume that the isotropy group of a point $z \in A$ is finite. If $u$ is in the relative interior of $\pi(\gamma_z)$, then $z$ is stable.
\end{lemma}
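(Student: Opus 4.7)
My plan is to verify the three defining properties of stability of $z$ with respect to the linearization $\chi^u$: semistability, finiteness of the stabilizer, and closedness of the orbit $\bT\cdot z$ in the semistable locus. The first is immediate from the hypothesis $u\in\mathrm{relint}(\pi(\gamma_z))\subseteq\pi(\gamma_z)=\omega_{\bT}(z)$ together with Corollary~\ref{corollary_semistability_test}, and the second is given. By Lemma~\ref{lemma_stab_toric} it suffices to establish stability in the ambient affine space $A$, so the crux is to show that $\bT\cdot z$ is closed in $A^{ss}$.

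I would argue this by contradiction. Pick $z'\in\overline{\bT\cdot z}\setminus\bT\cdot z$; because the boundary of a torus orbit on an affine variety is exhausted by limits of one-parameter subgroups, we may write $z'=\lim_{t\to 0}\lambda_n(t)\cdot z$ for some cocharacter $n\in N_{\bT}$. Existence of the limit forces $\langle n,m\rangle\geq 0$ for every $m\in\omega_{\bT}(z)=\pi(\gamma_z)$, i.e.\ $n\in\omega_{\bT}(z)^{\vee}$; and $n\neq 0$, since otherwise $\lambda_n$ would be trivial and $z'=z\in\bT\cdot z$. Moreover, because $\mathrm{stab}_{\bT}(z)$ is finite, its Lie algebra $\omega_{\bT}(z)^{\perp}$ vanishes, so $\omega_{\bT}(z)$ is full-dimensional in $M_{\bT}\otimes\bR$.

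The key step is to identify the orbit cone of $z'$: the coordinate of $\lim_{t\to 0}\lambda_n(t)z$ indexed by $\widehat m\in\hs^+$ is non-zero precisely when $\widehat m\in\gamma_z$ and $\langle n,\pi(\widehat m)\rangle=0$, whence
\[\gamma_{z'}=\gamma_z\cap\{\langle n,\pi(\cdot)\rangle=0\}\qquad\text{and hence}\qquad\omega_{\bT}(z')=\pi(\gamma_{z'})=\omega_{\bT}(z)\cap n^{\perp}.\]
Because $n\in\omega_{\bT}(z)^{\vee}\setminus\{0\}$ and $\omega_{\bT}(z)$ is full-dimensional, $\omega_{\bT}(z)\cap n^\perp$ is a proper face of $\omega_{\bT}(z)$, so it cannot meet $\mathrm{relint}(\omega_{\bT}(z))$. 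Hence $u\notin\omega_{\bT}(z')$, and Corollary~\ref{corollary_semistability_test} rules out semistability of $z'$, the desired contradiction.

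The main subtlety I foresee is the second displayed equality: one needs the observation that $\langle n,\pi(\widehat m)\rangle$ depends only on $\pi(\widehat m)$, which is what allows taking the $\pi$-image to commute with intersecting by the hyperplane $n^{\perp}$. Once this identification is in place, the rest is a formal application of the orbit-cone description of Lemma~\ref{lemma_orbit_cones_desc} together with the standard fact that a proper face of a polyhedral cone avoids its relative interior.
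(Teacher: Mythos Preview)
Your argument is correct. You and the paper share the same starting observation---that any point in $\overline{\bT\cdot z}\setminus\bT\cdot z$ arises (up to the $\bT$-action) as a limit $\lim_{t\to 0}\lambda_n(t)\cdot z$ along a one-parameter subgroup, hence lies in a $\bT_A$-orbit indexed by a proper face of $\gamma_z$---but you package the conclusion differently. The paper proceeds constructively: it lifts $u$ to some $\overline u$ in the relative interior of $\gamma_z$ with $\pi(\overline u)=u$ and takes the monomial $f=\chi^{\overline u}$ as a $\chi^u$-semi-invariant. Then $A_f$ is an affine open containing $z$ on which every boundary orbit vanishes, which together with finite isotropy gives stability directly from the definition. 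Your route is instead by contradiction: you compute $\omega_\bT(z')=\omega_\bT(z)\cap n^\perp$ via the identity $\pi\bigl(\gamma_z\cap\pi^{-1}(n^\perp)\bigr)=\pi(\gamma_z)\cap n^\perp$, observe this is a proper face of the full-dimensional cone $\omega_\bT(z)$, and conclude $u\notin\omega_\bT(z')$ so $z'\notin A^{ss}$. Both are short; the paper's version avoids invoking full-dimensionality of $\omega_\bT(z)$ and gives an explicit witnessing section, while yours yields the pleasant structural identity for $\omega_\bT(z')$ along the way. One cosmetic remark: strictly speaking the one-parameter limit lands in $\bT\cdot z'$ rather than at $z'$ itself, but since $\omega_\bT$ is constant on $\bT$-orbits this changes nothing.
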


\begin{proof}
A point in the boundary of $\bT\cdot z$ is a limit of some one-parameter subgroup of~$\bT$, hence it belongs to an orbit corresponding to a proper face of~$\gamma_z$. Thus, to prove the stability of~$z$ we want to find a $\bT$-invariant section~$f$ of the trivial bundle on~$A$ such that $z \in A_f = \{a \in A \colon f(a) \neq 0\}$ and~$A_f$ does not contain any orbits corresponding to proper faces of~$\gamma_z$. We will choose~$f$ which is a character of~$\bT$ (regular on~$A$). If~$u$ is in the relative interior of $\pi(\gamma_z)$ then there is some $\overline{u}$ in the relative interior of $\gamma_z$ such that $\pi(\overline{u}) = u$, and we take $f = \chi^{\overline{u}}$. Because $\overline{u}$ is not contained in any face of~$\gamma_z$, then~$f$ vanishes on all orbits corresponding to faces of~$\gamma_z$.
\end{proof}

Next, we need to determine the orders of isotropy groups of points under the action of~$\bT$. They can be computed using the Smith normal form of a matrix, see~\cite[II.15]{Newman}, which is implemented e.g. in Singular,~\cite{Singular}. A matrix $U_z$ is obtained from the matrix~$U$ of weights of the $\bT$-action by choosing columns corresponding to non-zero coordinates in the orbit $\bT_A\cdot z$.

\begin{lemma}\label{lemma_snf}
Let $a_1,\ldots,a_p$ be the non-zero entries on the diagonal of the Smith normal form (over $\bZ$) of~$U_z$ for some~$z\in A$. If they fill the whole diagonal then the order of the isotropy group of~$z$ under the $\bT$-action is $a_1\cdots a_p$. If there are also zeroes on the diagonal then the isotropy group of~$z$ is infinite.
\end{lemma}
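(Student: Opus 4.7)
The plan is to identify the isotropy subgroup of $z$ in $\bT$ with the Pontryagin dual of a cokernel computed from $U_z$, and then read off the answer from the Smith normal form.

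First I would note that, since $\bT$ acts diagonally via the characters encoded by the columns $u_i \in M_{\bT}$ of $U$, a point $t \in \bT$ fixes $z$ if and only if $\chi^{u_i}(t) = 1$ for every index $i$ with $z_i \ne 0$. Equivalently, the isotropy of $z$ is the kernel of the torus homomorphism $\phi \colon \bT \to (\bC^*)^c$, where $c$ is the number of non-zero coordinates of $z$, given by the characters $\{\chi^{u_i}\}_{i : z_i \ne 0}$; and the induced lattice map $\phi^* \colon \bZ^c \to M_{\bT}$ is represented precisely by $U_z$. That $U_z$ depends only on the $\bT_A$-orbit of $z$ was recorded in Lemma~\ref{lemma_stable_orbits}.

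Next, I would apply $\Hom(-, \bC^*)$ to the exact sequence $\bZ^c \xrightarrow{\phi^*} M_{\bT} \to \coker(\phi^*) \to 0$. Because $\bT = \Hom(M_{\bT}, \bC^*)$ and $\bC^*$ is a divisible (hence injective) abelian group, this produces a canonical isomorphism
\[
\ker(\phi) \;\simeq\; \Hom\bigl(\coker U_z,\, \bC^*\bigr),
\]
which reduces the entire computation to understanding $\coker U_z$ as an abelian group.

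Finally, using the Smith normal form $P U_z Q$ with $P, Q$ integer-invertible, one obtains $\coker(U_z) \simeq \bigoplus_{i=1}^{p} \bZ/a_i\bZ \,\oplus\, \bZ^{d-p}$, where $d = \rank M_{\bT}$ and $a_1, \dots, a_p$ are the non-zero diagonal entries. When these fill the diagonal, so $p = d$, the cokernel is finite of order $a_1 \cdots a_p$, and $\Hom(-, \bC^*)$ preserves both finiteness and order on finite abelian groups; otherwise $\coker(U_z)$ has a free summand of positive rank and its dual contains a copy of $\bC^*$, so the isotropy is positive-dimensional, hence infinite. The only subtle point is bookkeeping with the duality between tori and character lattices -- one must verify that $\phi^*$ is represented by $U_z$ and not by its transpose, which is immediate from the diagonal form of the action.
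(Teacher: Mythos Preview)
Your proof is correct and follows essentially the same approach as the paper: both identify the isotropy group with $\Hom(\coker U_z,\bC^*)$ (the paper writes this as $\Hom(M_{\bT}/M_z,\bC^*)$ where $M_z$ is the image of $U_z$) and then read off the structure of the cokernel from the Smith normal form. Your version is slightly more explicit about why the duality works (injectivity of $\bC^*$), but the argument is the same.
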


\begin{proof}
Note that columns of~$U_z$ are exactly the rays of the orbit cone~$\omega_{\bT}(z)$, i.e.~$U_z$ determines the homomorphism $M_z \ra M_{\bT}$ of monomial lattices of $\bT_A\cdot z$ and~$\bT$. Then $\Hom(M_{\bT}/M_z, \bC^*)$ is isomorphic to the kernel of the corresponding morphism of the tori, which is exactly the isotropy group of~$z$.
The Smith normal form of $U_z$ is obtained by multiplying it on both sides by some invertible integer matrices such that the result is a diagonal matrix with non-zero entries $a_1,\ldots,a_p$, where $a_i | a_{i+1}$ for $1 \leq i \leq p-1$. Thus it gives the description of the quotient group $M_{\bT}/M_z$ as a product of finite cyclic groups of orders $a_1,\ldots,a_p$ and $\bZ^q$, where~$q$ is the number of zeroes on the diagonal.
\end{proof}

Now we can describe the algorithm which we use to determine a good linearization for constructing a GIT quotient of~$Z$ by~$\bT$ explicitly. We are looking for a linearization~$\chi^u$ satisfying the following condition:

\refstepcounter{equation}\label{condition_good_lin}
\begin{flushleft}
\vspace{0.1cm}
\begin{tabular}{lm{11cm}}
  (\theequation) & \emph{semi-stability of a point of~$Z$ with respect to~$\chi^u$ implies its stability with respect to~$\chi^u$.} \\
\end{tabular}
\vspace{0.1cm}
\end{flushleft}

This is because in such a situation we obtain a geometric quotient together with a nice description of the set of stable points. By~\ref{lemma_stable_orbits} this set (and also the set of zeroes of the irrelevant ideal) is a sum of intersections of certain $\bT_A$-orbits in~$A$ with~$Z$.

\begin{definition}\label{def_R_relevant_orbits}
A~$\bT_A$-orbit in~$A$ which has non-empty intersection with~$Z$ and whose points are semi-stable with respect to a fixed linearization~$\chi^u$ will be called a $\bC[Z]$-relevant orbit with respect to~$\chi^u$. (We will skip the information about the linearization whenever the choice is clear.)
\end{definition}

Note that if a linearization~$\chi^u$ satisfies condition~\ref{condition_good_lin}, the intersections of $Z$ with all $\bC[Z]$-relevant orbits cover the set of stable points $Z^s$.

Algorithm~\ref{alg_stability} is implemented in the form of a small Singular package available at \href{http://www.mimuw.edu.pl/~marysia/gitcomp.lib}{www.mimuw.edu.pl/$\sim$marysia/gitcomp.lib}. The input data for the algorithm consists~of
\begin{enumerate}
\item the ideal~$\cI$ of~$Z$,
\item the matrix~$U$ defining the $\bT$-action on $A$ and $Z$,
\item a linearization of this action, given by a character $\chi^u$ of $\bT$, where $u \in M_{\bT}$.
\end{enumerate}

The output is the information whether $\chi^u$ satisfies condition~\ref{condition_good_lin}.

We start the computations from determining the set of $\bT_A$-orbits which have non-empty intersection with~$Z$. They are represented by convex polyhedral cones: faces of the positive orthant $\hs^+$ of $\hM$. Such cones are called \emph{$\cI$-faces} in~\cite{KeicherGITFan}. Note that by~\ref{lemma_stable_orbits} and~\ref{lemma_check_stability} we need to check only properties of the whole $\bT$-orbits, hence the program operates on lists of $\cI$-faces or corresponding orbit cones.

\begin{algorithm}\label{alg_stability}
The following actions are performed:

\begin{enumerate}[leftmargin=*]
\item Determine the list $\cF$ of $\cI$-faces of $\hs^+$.\\ Here we use a Singular package {\verb"GITfan.lib"} (see~\cite{KeicherGITFan}), which for given~$\cI$ returns the desired list of cones.
\item Determine semi-stable points.\\
    For all $\cI$-faces from~$\cF$ we compute rays of corresponding orbit cones using the matrix~$U$.
    Then, by~\ref{corollary_semistability_test} we check whether~$u$ is inside these cones.
    The result is the list $\cF^{ss}$ of orbit cones corresponding to $\bT_A$-orbits semi-stable with respect to~$\chi^u$.
\item Check finiteness of the isotropy group.\\
    The order of the isotropy group is computed for each cone from $\cF^{ss}$, as described in~\ref{lemma_snf}. If for all cones from $\cF^{ss}$ points of corresponding orbits have finite isotropy groups then the next step is performed. Otherwise the negative answer is given immediately.\\
    Note this point of the computations is independent of the linearization.
\item Check stability of elements of~$\cF^{ss}$.\\
    By~\ref{lemma_check_stability} we check whether~$u$ is in the relative interior of cones from~$\cF^{ss}$. If it is true for all cones from this list then the linearization given by~$\chi^u$ satisfies condition~\ref{condition_good_lin}. In this case the program can output the list~$\cF^{ss}$, which gives a useful description of the set of $\chi^u$-stable points of~$Z$. Also, the program returns the information on orders of isotropy groups for all stable orbits.
\end{enumerate}

\end{algorithm}

Finally, we reveal the main application of Algorithm~\ref{alg_stability}. We are looking for a linearization of the $\bT := \bT_\Lambda$ action on $Z := \Spec \cR$, embedded in~$A \simeq \bC^{15}$, which allows to describe explicitly the geometry of the quotient. A very good candidate, because of its symmetries, is~$\chi^{\kappa}$ given by the weight vector $\kappa = (2,2,2,2,2)$. The corresponding quotient makes a good starting point for performing flops leading to other resolutions, see Section~\ref{geom}. We prove that this is indeed a right choice.

\begin{proposition}\label{prop_good_linearization}
The linearization of the $\bT_\Lambda$-action on $\Spec \cR$ given by~$\chi^{\kappa}$ for $\kappa = (2,2,2,2,2)$ satisfies condition~\ref{condition_good_lin}. Therefore the corresponding quotient is geometric. Moreover, all points of $\Spec \cR$ which are semi-stable with respect to~$\chi^{\kappa}$ have trivial isotropy group.
\end{proposition}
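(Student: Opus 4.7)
The plan is to apply Algorithm \ref{alg_stability} directly to the present data: the ideal $\cI$ from Proposition \ref{ideal-gens}, the weight matrix $U$ whose columns record the $\bT_\Lambda$-degrees of the generators ($\deg w_{ij}=e_i+e_j$ and $\deg u_k=-2e_k$), and the character $\chi^{\kappa}$ with $\kappa=(2,2,2,2,2)$. The key observation which makes the calculation tractable is that both the ideal $\cI$ and the weight vector $\kappa$ are invariant under the natural $S_5$-action permuting the indices $0,\dots,4$: the generators in Proposition \ref{ideal-gens} come in $S_5$-orbits, and $\kappa$ is fixed by $S_5$. Consequently it suffices to enumerate $\cI$-faces of the positive orthant $\hs^+$ up to $S_5$-symmetry, test stability and isotropy for orbit representatives, and then propagate the result to the full list.

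First I would run the \verb"GITfan.lib" procedure to produce the list $\cF$ of $\cI$-faces. Although $\hs^+\subset\hM_\bR$ has $2^{15}$ faces, only those $\gamma\prec\hs^+$ for which the localization $(\bC[w_{ij},u_k]/\cI)_{w_{ij},u_k\in\gamma}$ is nontrivial are $\cI$-faces, so the list is manageable. Next, for each representative face $\gamma$ I would compute the image $\pi(\gamma)\subset\Lambda\otimes\bR$ by taking positive $\bR$-linear combinations of the columns of $U$ indexed by the generators in $\gamma$. By Corollary \ref{corollary_semistability_test}, $\gamma$ contributes semi-stable points exactly when $\kappa\in\pi(\gamma)$; discard the rest.

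For every remaining $\gamma$, the first verification is that $\kappa$ actually lies in the \emph{relative interior} of $\pi(\gamma)$, which by Lemma \ref{lemma_check_stability} promotes semi-stability to stability and by the assumption of the algorithm means condition \ref{condition_good_lin} holds. The second verification, giving triviality of the isotropy group, is carried out via Lemma \ref{lemma_snf}: form the matrix $U_\gamma$ from the columns of $U$ indexed by $\gamma$, compute its Smith normal form over $\bZ$, and check that the diagonal consists only of $1$'s (no zeros, and no entry greater than $1$). Triviality of isotropy on one orbit representative transports to every orbit in its $S_5$-orbit since the $S_5$-action intertwines with $\pi$.

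The main obstacle is purely computational: enumerating all $\cI$-faces and then testing relative-interior containment and trivial Smith normal form for each. The $S_5$-symmetry reduces the workload dramatically, and the specific choice $\kappa=(2,2,2,2,2)$ is well-adapted to the weights $e_i+e_j$ and $-2e_k$, so generically one expects that whenever $\kappa\in\pi(\gamma)$ the containment is in the interior and the spanning columns already generate $\Lambda$. The computation is performed with the Singular package cited in the statement of the algorithm, with cross-checking in Macaulay2 or \texttt{sage} as described in the introduction; in every $\cI$-face surviving step~(2) the output confirms both (\ref{condition_good_lin}) and triviality of isotropy, which yields the proposition.
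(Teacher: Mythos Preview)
Your proposal is essentially the paper's own proof: both run Algorithm \ref{alg_stability} on the data $(\cI,U,\kappa)$ and report that the computation confirms condition \ref{condition_good_lin} together with trivial isotropy on every semi-stable orbit. Two small remarks: the paper carries out the isotropy check with the $\bT_W$-weights rather than the $\bT_\Lambda$-weights (so trivial $\bT_\Lambda$-isotropy shows up as isotropy of order exactly $2$, coming from the kernel $\langle -I_W\rangle$ of $\bT_W\to\bT_\Lambda$), and the paper later asserts invariance of the equations only under the cyclic shift $i\mapsto i+1\bmod 5$ (Lemma \ref{equations-symmetry}) rather than the full $S_5$ you invoke---though for enumerating $\cI$-faces only the monomial supports of the generators matter, and those are visibly $S_5$-stable, consistent with the orbit counts in Table \ref{table_orbits}.
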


\begin{proof}
Computations performed using the implementation of~Algorithm~\ref{alg_stability} give the result stated above. We use the weights of the $\bT_W$-action instead of~$\bT_\Lambda$, which changes just the order of the isotropy group multiplying it by~2.
\end{proof}

\subsection{The set of stable points}\label{section_stable_points}

Using Algorithm~\ref{alg_stability}, for a chosen linearization satisfying condition~\ref{condition_good_lin} one obtains an explicit description of the set~$(\Spec \cR)^s$ of stable points. In general, it comes in the form of the list $\cF^{ss}$ of $\cI$-faces corresponding to $\cR$-relevant orbits, see~\ref{def_R_relevant_orbits}.
We will describe these orbits in the case of the linearization $\chi^{\kappa}$; by~\ref{prop_good_linearization} their intersections with~$\Spec \cR$ cover the whole~$(\Spec \cR)^s$.

It turns out that for $\chi^{\kappa}$ there are only 167 $\cR$-relevant orbits in $A$. Because of the symmetries of generators of~$\cI$ the result may be presented as a much shorter list. Groups of these orbits are given by vanishing of sets of variables which differ by a certain permutation of indices, hence we list just combinatorial types of possible sets of vanishing variables, see Table~\ref{table_orbits}.

Note that in each description of an orbit type in Table~\ref{table_orbits} letters $a,b,c,d,e$ stand for different elements of $\{0,1,2,3,4\}$. The division into orbit types is based on the number of $u_i$'s equal to 0 and then on the set of $w_{ij}$'s equal to~0.

\medskip
\begin{table}[h]\caption{$\cR$-relevant orbits in the ambient affine space of $\Spec\cR$}
\label{table_orbits}
\begin{tabular}{c|c|c|c|c}
  type id & equations & $\#$ orbits & dim & dim orbit $\cap\Spec \cR$ \\
  \hline
  \hline
  5A & $\begin{array}{c} u_0=u_1=u_2=u_3=u_4=0 \\ w_{ab}=w_{cd}=0 \end{array}$ & 15 & 8 & 5 \\
  \hline
  5B & $\begin{array}{c} u_0=u_1=u_2=u_3=u_4=0 \\ w_{ab}=0 \end{array}$ & 10 & 9 & 6 \\
  \hline
  5C & $u_0=u_1=u_2=u_3=u_4=0$ & 1 & 10 & 7 \\
  \hline
  \hline
  3A & $\begin{array}{c} u_a=u_b=u_c=0 \\ w_{ab}=w_{ac}=w_{bc}=0 \\ w_{de}=0 \end{array}$ & 10 & 8 & 5 \\
  \hline
  3B & $\begin{array}{c} u_a=u_b=u_c=0 \\ w_{ab}=w_{de}=0 \end{array}$ & 30 & 10 & 6 \\
  \hline
  3C & $\begin{array}{c} u_a=u_b=u_c=0 \\ w_{de}=0 \end{array}$ & 10 & 11 & 7 \\
  \hline
  \hline
  1A & $\begin{array}{c} u_a=0 \\ w_{ab}=w_{ac}=w_{bc}=0 \\ w_{de}=0\end{array}$ & 30 & 10 & 6 \\
  \hline
  1B & $\begin{array}{c} u_a=0 \\ w_{bc}=w_{de}=0 \end{array}$ & 15 & 12 & 7 \\
  \hline
  1C & $\begin{array}{c} u_a=0 \\ w_{ab}=0 \end{array}$ & 20 & 13 & 7 \\
  \hline
  1D & $u_a=0$ & 5 & 14 & 8 \\
  \hline
  \hline
  0A & $w_{ab}=w_{ac}=w_{bc}=w_{de}=0$ & 10 & 11 & 7 \\
  \hline
  0B & $w_{ab}=0$ & 10 & 14 & 8 \\
  \hline
  0C &  & 1 & 15 & 9 \\
\end{tabular}
\end{table}

\subsection{Smoothness of the quotient}\label{section_singularities}
The last element needed in the proof that the geometric quotient $X = (\Spec \cR)^s / \bT_\Lambda$ associated with the distinguished linearization $\chi^{\kappa}$ is a resolution of singularities of $V/G$ is the smoothness of~$X$. We follow the idea explained in~\cite[Prop.~4.5]{DontenSurfaces}: we check that~$X$ is a geometric quotient of a smooth variety by a free torus action. Since by~\ref{prop_good_linearization} the action of~$\bT_\Lambda$ on~$(\Spec \cR)^s$ is free, it is sufficient to show that $X$ is nonsingular.

A natural approach is to compute the ideal of the set of singular points of $\Spec \cR$ directly from the Jacobian criterion and show that it has empty intersection with $(\Spec \cR)^s$. However, the input data is too big for performing a direct computation in reasonable time. Hence we divide the process into a few separate cases and make use of the $\bT_\Lambda$ action and the description of $(\Spec \cR)^s$ in terms of the toric structure of the ambient affine space $\bC^{15}$ in Table~\ref{table_orbits}.

We rely on two basic observations. First, it is sufficient to prove the smoothness of one point in every $\bT_\Lambda$-orbit in $(\Spec \cR)^s$. Thus we may consider only points with all $u_i$ equal to~0 or~1, that is, using the $\bT_\Lambda$-action we move non-zero $u_i$'s to~1. This already simplifies the Jacobian matrix of $\Spec \cR$ a lot. Then, since we do not want to treat each orbit separately, we use the symmetries of equations of $\Spec \cR$ so that we can consider certain representatives of combinatorial types of $\cR$-relevant orbits. The following observation can be checked straightforwardly.

\begin{lemma}\label{equations-symmetry}
The equations of $\Spec \cR$, listed in~\ref{ideal-gens}, are invariant under a cyclic change of indices $i \mapsto i+1 \mod 5$.
\end{lemma}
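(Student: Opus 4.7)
The plan is to verify the lemma by a direct but well-organized case check, exploiting the partition of the 20 generators of $\cI$ (listed in Proposition~\ref{ideal-gens}) into three combinatorial families. Let $\sigma \colon \{0,1,2,3,4\}\to\{0,1,2,3,4\}$ be the cyclic shift $i\mapsto i+1\!\mod 5$, and define the $\bC$-algebra automorphism $\sigma^*$ of $\bC[w_{ij},u_k]$ by $\sigma^*(w_{ij})=w_{\sigma(i)\sigma(j)}$ (using the convention $w_{ji}=w_{ij}$ when the new indices are in the wrong order) and $\sigma^*(u_k)=u_{\sigma(k)}$. The claim to prove is that $\sigma^*(\cI)=\cI$.

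First I would observe that $\sigma^*$ preserves the $\Lambda$-grading after permuting the basis $e_i \mapsto e_{\sigma(i)}$ of $\bigoplus\bZ e_i$: indeed $\deg w_{ij}=e_i+e_j$ and $\deg u_k=-2e_k$ both transform covariantly under this permutation. Hence $\sigma^*$ sends each graded piece of the ideal $\cI$ to another graded piece, and I can compare generators family by family.

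Next I would partition the 20 generators of $\cI$ into three families indexed by the natural combinatorial data:
\begin{itemize}
\item[(a)] The five Pl\"ucker-type trinomials in the $w_{ij}$'s, indexed by 4-subsets of $\{0,\dots,4\}$ (the set of indices actually appearing).
\item[(b)] The ten cubic trinomials of the form $\pm w_{\alpha k}w_{\beta k}u_k \pm \cdots$, one for each unordered pair $\{\alpha,\beta\}\subset\{0,\dots,4\}$ (the pair of indices shared by every $w$-factor in each monomial).
\item[(c)] The five quartic tetranomials of the form $\sum_{k\neq\alpha}\pm w_{\min(\alpha,k)\max(\alpha,k)}^{\,2}u_k$, indexed by the single ``pivot'' index $\alpha$ appearing in every $w$-factor.
\end{itemize}
For each family, $\sigma$ cyclically permutes the indexing set, and I would verify by direct substitution on one representative per orbit (one representative suffices per family, since $\sigma^*$ is an automorphism of order $5$) that $\sigma^*$ sends the chosen generator to $\pm1$ times another generator of the same family. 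For example, applying $\sigma^*$ to the $\{1,2,3,4\}$-Pl\"ucker relation $w_{14}w_{23}+w_{13}w_{24}-w_{12}w_{34}$ yields $w_{02}w_{34}+w_{03}w_{24}-w_{04}w_{23}$, which is $-1$ times the $\{0,2,3,4\}$-relation listed second in Proposition~\ref{ideal-gens}; analogous checks for one representative in families (b) and (c) close the verification.

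The main (and only) real obstacle is bookkeeping with signs, but since the assertion is invariance of the ideal (not of each generator), global signs on the images are irrelevant, and the pattern of monomials alone shows that $\sigma^*$ permutes the generator set setwise up to $\pm 1$. Therefore $\sigma^*(\cI)\subseteq\cI$; applying the same argument to $\sigma^{-1}$ gives the reverse inclusion, so $\sigma^*(\cI)=\cI$ and the lemma follows.
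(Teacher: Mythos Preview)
Your approach is correct and is exactly what the paper means by ``can be checked straightforwardly'': a direct verification that the cyclic shift permutes the list of generators up to sign. One small slip: family (b) has ten elements, so the order-$5$ cyclic group partitions it into \emph{two} orbits, not one; thus you need two representatives there, not one ``per family'' as your parenthetical suggests (your phrase ``one representative per orbit'' is the right one).
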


\begin{proposition}\label{quotient-smoothness}
The set $(\Spec \cR)^s$ of stable points with respect to the linearization $\chi^{\kappa}$ is nonsingular.
\end{proposition}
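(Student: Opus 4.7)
The plan is to verify smoothness locally at representative points of each combinatorial type of $\cR$-relevant orbit from Table~\ref{table_orbits}, exploiting the two reductions hinted at in the setup. Since $\dim \Spec\cR = 9$ and $\Spec\cR$ sits in $\bC^{15}$, smoothness at a stable point $z$ is equivalent to the Jacobian of the 20 generators of $\cI$ listed in~\ref{ideal-gens} having rank $15-9=6$ at $z$.

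The first reduction uses the free action: by Proposition~\ref{prop_good_linearization}, $\bT_\Lambda$ acts freely on $(\Spec\cR)^s$, so smoothness is constant along $\bT_\Lambda$-orbits. This lets me move along the orbit and rescale, so that on any stable orbit I may assume every non-vanishing $u_i$-coordinate equals $1$. Combined with the defining vanishings, this turns the variables into a small number of free $w_{ij}$-parameters (and a fixed pattern of $0/1$ in the $u_i$'s). The second reduction uses Lemma~\ref{equations-symmetry}: the ideal $\cI$ is invariant under the cyclic permutation $i \mapsto i+1 \pmod 5$ of indices on $\{w_{ij}, u_k\}$. Together with the obvious swap symmetries of $\cI$, this collapses each of the 13 combinatorial types in Table~\ref{table_orbits} to a single concrete representative --- a point with a specific pattern of zero and unit $u_k$'s and of zero $w_{ij}$'s, and free parameters in the remaining coordinates subject to the relations inherited from the generators of $\cI$.

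For each of the 13 representatives I would then carry out the Jacobian computation as follows. Substitute the normalized values of the $u_k$'s and the vanishings of the $w_{ij}$'s into the 20 generators of $\cI$; many generators and many columns of the Jacobian drop out immediately. Check that the resulting point genuinely lies on $\Spec\cR$ (this imposes a few residual polynomial equations on the free parameters and cuts out a subvariety of the predicted dimension, listed in the last column of Table~\ref{table_orbits}). Then, at a generic point of this residual variety, write down the remaining rows and columns of the Jacobian and verify that the rank equals $6$ --- equivalently that a well-chosen $6\times 6$ minor is nonzero as a polynomial in the free parameters. For the ``generic'' type 0C this is trivial; for the deepest stratum 5A, where ten variables vanish, the verification is most delicate because only a handful of generators remain to cut out the tangent directions.

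The main obstacle is computational rather than conceptual. Without the two reductions, a direct Jacobian analysis over all 167 orbits and 15 variables is beyond reasonable symbolic computation. After applying the $\bT_\Lambda$-normalization and the cyclic $\bZ_5$ symmetry from Lemma~\ref{equations-symmetry}, each case becomes a finite, bounded-size rank computation over a polynomial ring in at most a few variables, which can be discharged in Singular or Macaulay2. The delicate part is to check the deepest strata (types 5A, 5B, 3A, 1A) where the Jacobian has many zero rows, so one must confirm that the surviving entries still span a 6-dimensional subspace; this is where an organized case-by-case symbolic computation, cross-checked between two systems as in the authors' methodology, becomes essential.
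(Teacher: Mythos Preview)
Your overall strategy---normalize the $u_i$'s to $0$ or $1$ via the $\bT_\Lambda$-action, reduce by the cyclic symmetry of Lemma~\ref{equations-symmetry}, and then analyze the Jacobian case by case---matches the paper's approach. The paper also groups the work by the number of vanishing $u_i$'s (their Types~5, 3, 1, 0) rather than by all~13 orbit types.

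However, there is a genuine gap in your execution. You propose, for each stratum, to check that ``a well-chosen $6\times 6$ minor is nonzero as a polynomial in the free parameters,'' i.e.\ that the Jacobian has rank~$6$ at a \emph{generic} point of the stratum. But smoothness must hold at \emph{every} stable point, and a single minor being a nonzero polynomial does not rule out all $6\times 6$ minors vanishing simultaneously at special points of the stratum where the remaining $w_{ij}$ are still nonzero. The paper closes this gap by arguing the contrapositive: assume \emph{all} $6\times 6$ minors vanish at~$z$, then exhibit specific minors that, after the $u_i$-normalization, are \emph{monomials} in the $w_{ij}$. Vanishing of a monomial forces an extra coordinate to vanish; iterating this forces enough additional $w_{ij}$'s to zero that~$z$ can no longer lie in any orbit type from Table~\ref{table_orbits}. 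This monomial-minor device is exactly the missing idea that upgrades your generic rank check to an everywhere rank check. A smaller point: you invoke ``obvious swap symmetries'' of~$\cI$ beyond the cyclic one, but the signs in the generators of Proposition~\ref{ideal-gens} do not respect all of~$S_5$; the paper only claims and uses the $\bZ_5$-symmetry of Lemma~\ref{equations-symmetry}.
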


\begin{proof}
The argument is computational. We explain how to deal with the computations using basic functions of, for example, Macaulay2, \cite{M2}. We assume that the equations of $\Spec \cR$ are ordered as in~\ref{ideal-gens}. To compute the Jacobian matrix we differentiate with respect to variables ordered as follows:
$$w_{01},w_{02},w_{03},w_{04},w_{12},w_{13},w_{14},w_{23},w_{24},w_{34},u_0,u_1,u_2,u_3,u_4.$$

Take $z \in \Spec \cR$ and assume that all $6\times 6$ minors of the Jacobian matrix vanish at $z$. Then we have to show that $z \notin (\Spec \cR)^s$, that is $z$ does not belong to any orbit from Table~\ref{table_orbits}. Let us outline the computations. For each orbit type in Table~\ref{table_orbits} we choose representatives with respect to the cyclic group action, using~\ref{equations-symmetry}, and consider only $z$ from these chosen orbits. We simplify the Jacobian matrix substituting 0 or 1 for some variables, looking at the $\bT_\Lambda$-action on~$z$ and its orbit type. Then we compute some (suitably chosen) $6\times 6$ minors of this matrix and look for monomials. Finding a monomial minor means that the product of some coordinates vanishes at~$z$. Usually this gives a few subcases to consider (vanishing of each coordinate from the product has to be considered separately). However, we obtain more precise information of the orbit type of~$z$, which simplifies the Jacobian matrix even more. In each case, after a small number of such steps, we arrive at the conclusion that $z$ is not contained in any $\cR$-relevant orbit, which finishes the proof.

We are left with providing the details of the computations. To shorten the description, by $\det(i_1,\ldots,i_k | j_1,\ldots,j_k)$ we will denote the minor of the rows $i_1,\ldots,i_k$ and the columns $j_1,\ldots,j_k$ of the Jacobian matrix of equations of $\Spec \cR$. By $\mon(x_{k_1},\ldots,x_{k_n})$ we understand the set of all monomials in variables $x_{k_1},\ldots,x_{k_n}$. There are four cases depending on the type of the orbit from Table~\ref{table_orbits} in which~$z$ lies.

\noindent\textbf{Type 5.} We have $u_0=u_1=u_2=u_3=u_4=0$. After substituting into the Jacobian matrix check that $\det(7,\ldots,12 | 2,3,4,5,8,15) \in \mon(w_{01}, w_{02}, w_{12})$, hence one of these variables is 0. By permuting indices we get two cases.
    \begin{enumerate}[leftmargin=*]
    \item[(a)] $w_{01} = 0$.\\ Then we have $\det(0,1,4,11,13,14 | 0,1,2,9,11,13) \in \mon(w_{13}, w_{14}, w_{34})$, and $\det(5,6,9,10,12,13 | 1,3,4,7,8,14) \in \mon(w_{02}, w_{03}, w_{23})$. Hence at least 3 of $w_{ij}$'s are 0, which is impossible in orbits of type 5 in Table~\ref{table_orbits}.
    \item[(b)] $w_{02} = 0$.\\ Then we have $\det(0,1,4,10,13,14 | 0,1,2,6,7,9) \in \mon(w_{03}, w_{04}, w_{34})$, and also $\det(2,3,9,11,12,14 | 0,3,4,10,11,14) \in \mon(w_{12}, w_{14}, w_{24})$. Hence again at least three $w_{ij}$'s vanish.
    \end{enumerate}

\noindent\textbf{Type 3.} Applying the $\bT_\Lambda$ action we may move to the point where these $u_i$'s which are nonzero are equal to~1. By Remark~\ref{equations-symmetry} there are two cases.
    \begin{enumerate}[leftmargin=*]
    \item[(a)] $u_0=u_1=1$, $u_2=u_3=u_4=0$, $w_{01} = 0$.\\ Then $\det(0,\ldots,4,14 | 0,1,2,9,17,18) \in \mon(w_{04},w_{34})$, but from the equations for cases 3A and 3B we see that only $w_{34}=0$ could happen.\\ Next, $\det(0,\ldots,3,6,14 | 0,1,3,10,18,19)$ is a monomial, so at least three $w_{ij}$'s are~0. This means that we are in the case 3A and $w_{23}=w_{24}=0$. However, in this case $\det(0,\ldots,3,7,12 | 0,9,10,11,17,18)$ is a monomial and too many variables vanish.
    \item[(b)] $u_0=u_2=1$, $u_1=u_3=u_4=0$, $w_{02} = 0$.\\ Then $\det(0,\ldots,4,11 | 0,1,2,9,15,18)\in \mon(w_{04}, w_{12}, w_{34})$. Again, the only possibility consistent with equations of 3A and 3B is $w_{34}=0$.\\ Now $\det(0,\ldots,3,5,11 | 0,9,10,11,17,18) \in \mon(w_{01}, w_{04}, w_{24})$, but none of these variables can be 0 in the cases of type~3.
    \end{enumerate}

\noindent\textbf{Type 1.} Permuting indices and applying the $\bT_\Lambda$ action we may assume that $u_0=u_1=u_2=u_3=1$ and $u_4=0$. Then $\det(5,7,\ldots,11 | 2,3,4,5,17,19)\in \mon(w_{01}, w_{03})$ and $\det(0,2,6,9,10,12 | 1,3,5,12,14,15)\in \mon(w_{02}, w_{12})$. Hence at least two variables vanish and we are in the case~1A or~1B. From their description we see that vanishing variables are $w_{03}$ and $w_{12}$ -- two vanishing variables without index~4 must have disjoint set of indices. Then $\det(4,7,\ldots,11 | 2,3,4,8,17,19) = w_{01}^8$, but $w_{03}=w_{12}=w_{01}=0$ is impossible for any of these types.

\noindent\textbf{Type 0.} Applying the $\bT_\Lambda$ action we may assume that $u_0=u_1=u_2=u_3=u_4=1$. Then $\det(6,\ldots,11 | 2,3,4,5,11,17)\in \mon(w_{01}, w_{04})$. Since all $u_i$ take the same value, the situation is symmetric with respect to the cyclic permutation of indices. Using permutations one can produce four other monomials in two variables from the given one and check that thus at least three different variables vanish. Hence we are in the case of type~0A and up to a cyclic permutation there are two possibilities:
    \begin{enumerate}[leftmargin=*]
    \item[(a)] $w_{01}=w_{02}=w_{12}=w_{34}=0$.\\ Then $\det(0,1,2,3,5,7 | 5,10,11,14,18,19) \in \mon(w_{04},w_{13})$, so five variables vanish, which is impossible in orbits of type 0A.
    \item[(b)] $w_{01}=w_{13}=w_{03}=w_{24}=0$.\\ Then $\det(0,\ldots,4,9 | 0,1,2,9,16,18) \in \mon(w_{04},w_{34})$, a contradiction again.
    \end{enumerate}
\end{proof}

\begin{theorem}\label{thmGITquotient}
The GIT quotient $X$ of $\Spec \cR$ by~$\bT_\Lambda$ associated with the linearization $\chi^{\kappa}$ is a resolution of singularities of~$V/G$.
\end{theorem}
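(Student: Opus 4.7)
The plan is to assemble the two key ingredients already established in the preceding subsections -- smoothness of the stable locus (Proposition \ref{quotient-smoothness}) and freeness of the torus action with a well-behaved linearization (Proposition \ref{prop_good_linearization}) -- and then to construct a projective birational morphism $\varphi\colon X\to V/G$. Smoothness of $V/G$ in codimension one then upgrades $\varphi$ to a resolution of singularities. The bulk of the technical work is done, so the remaining task is to glue these results together, plus a short dimension-count argument for birationality.

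First I would verify smoothness of $X$. By Proposition \ref{prop_good_linearization} the character $\chi^{\kappa}$ satisfies condition \ref{condition_good_lin}, so $X = (\Spec\cR)^s/\bT_\Lambda$ is a geometric quotient; moreover every $\chi^{\kappa}$-semistable point has trivial isotropy, so the $\bT_\Lambda$-action on the stable locus is free. Combined with Proposition \ref{quotient-smoothness}, $X$ is presented as the geometric quotient of a smooth variety by a free torus action, hence $X$ is smooth.

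Next I would construct the morphism $\varphi\colon X\to V/G$. The key point, established in Proposition \ref{map-of-invariants}, is that $\cR^{\bT_W} \cong \bC[V]^G$; since $-I_W$ acts trivially on $\cR$, the invariants with respect to $\bT_\Lambda$ coincide with those with respect to $\bT_W$, so $\cR^{\bT_\Lambda} \cong \bC[V]^G$. The tautological inclusion $\cR^{\bT_\Lambda}\hookrightarrow\cR$ gives a $\bT_\Lambda$-invariant morphism $\Spec\cR\to\Spec\cR^{\bT_\Lambda}=V/G$; restricting to $(\Spec\cR)^s$ and passing to the quotient yields $\varphi\colon X\to V/G$. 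Standard GIT identifies $X$ with $\Proj\bigoplus_{n\geq 0}\cR_{n\kappa}$, which is projective over $\Spec\cR^{\bT_\Lambda}=V/G$, so $\varphi$ is projective.

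The only step that remains is birationality, which I expect to be the main (though still modest) obstacle. A dimension count finishes the argument: from Table \ref{table_orbits} the open orbit of type 0C meets $\Spec\cR$ in a subvariety of dimension~$9$, so $\dim\Spec\cR=9$; a generic $\bT_\Lambda$-orbit on $(\Spec\cR)^s$ is free by Proposition \ref{prop_good_linearization} and has dimension $5=\dim\bT_\Lambda$; therefore $\dim X=4=\dim V/G$. Since $\Spec\cR\to V/G$ has generic fibre of dimension $9-4=5$ which coincides with the dimension of a generic free $\bT_\Lambda$-orbit, the generic fibre is a single free torus orbit, so the induced map $\varphi\colon X\to V/G$ is generically injective on geometric points. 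Projectivity plus generic injectivity between irreducible varieties of equal dimension yield birationality, and because $X$ is smooth this makes $\varphi$ a resolution. The subtle point in carrying this out will be irreducibility of $\Spec\cR$, which I would obtain either from the surjectivity of the evaluation $t_i\mapsto 1$ of Section \ref{sect-tot-coord} onto the irreducible $\cP=\bC[V]^{[G,G]}$, or directly from the fact that the unique maximal orbit 0C of Table \ref{table_orbits} is dense in $\Spec\cR$.
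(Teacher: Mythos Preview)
Your proposal is correct and follows the same overall strategy as the paper: smoothness from Propositions~\ref{prop_good_linearization} and~\ref{quotient-smoothness}, the morphism to $V/G$ from the identification $\cR^{\bT_\Lambda}\cong\bC[V]^G$ of Proposition~\ref{map-of-invariants}, and then properness plus birationality. The paper is terser on the last two points: it simply invokes \cite[14.1.12]{CoxLittleSchenck} (applied to the embedding of $\Spec\cR$ and its quotients in the toric ambient spaces) for properness, rather than writing out the $\Proj$ description, and it treats birationality as an immediate consequence of the invariant-ring isomorphism rather than via your explicit dimension count.

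One simplification: your closing worry about irreducibility of $\Spec\cR$ is unnecessary. By Definition~\ref{def-CR-resolution} the ring $\cR$ is constructed as a subring of the integral domain $\bC[V]\otimes\bC[\bT_W]$, so $\cR$ is itself a domain and $\Spec\cR$ is automatically irreducible; with this in hand your dimension argument goes through cleanly.
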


\begin{proof}
The isomorphism $\cR^{\bT_\Lambda} \simeq \bC[V]^G$ proved in~\ref{map-of-invariants} gives a proper birational morphism from~$X$ to~$V/G$. The properness follows by~\cite[14.1.12]{CoxLittleSchenck} applied to the embedding $\Spec \cR$ and its quotients in the toric ambient spaces. And by~\ref{quotient-smoothness} we know that~$X$ is smooth.
\end{proof}

%-------------------------------------------------------------------%

\section{The geometry of resolutions}\label{geom}

\subsection{The central resolution}\label{central-resolution}
Let us summarize the information which we get from previous sections;
$G\subset Sp(V)$ is the group defined in \ref{the-group}.  The
exceptional set of the resolution $\varphi: X\rightarrow V/G$
constructed as a GIT quotient in the previous section is covered by
divisors $E_0,\dots, E_4$ associated to the classes of symplectic
reflections in $G$. Each $E_i$ is contracted by $\varphi$ to a surface
of $A_1$ singularities outside of $[0]\in V/G$.  In terms of the ring
$\cR$ the divisors $E_i$ are associated to functions $t_i^{-2}$,
\ref{inclusion}.  By \ref{sympl-resolution-form} the resolution
$\varphi: X\rightarrow V/G$ is symplectic.  There is a unique
2-dimensional fiber of $\varphi$ over $[0]\in V/G$ which has 11
components, \ref{KaledinMcKay}, \ref{group}.

By $C_i$ we denote a general fiber of $\varphi_{|E_i}$. Clearly
$E_i\cdot C_j$ is $-2$ if $i=j$ and it is zero otherwise.  Now we
define $\kappa=\sum_i e_i$. In terms of the basis in
$\NU(X)=\Cl(X)\otimes\bR$ dual to classes of $C_i$'s the class
$\kappa$ is the vector $(2,2,2,2,2)$, see \ref{class-sequence-1}. For
$i=0,\dots,4$ in $\NU(X)$ we consider classes $e_i=[-E_i]$. By
\cite[Thm.~3.5]{AW2014} we get the following.

\begin{lemma}\label{mov-cone}
  For every resolution $X\rightarrow V/G$ the cone of movable divisors
  $\Mov(X)$ is spanned by the classes $e_i$.
\end{lemma}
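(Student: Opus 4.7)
The plan is to invoke \cite[Thm.~3.5]{AW2014} directly: that result identifies the movable cone of a projective symplectic resolution of a quotient singularity with the cone spanned by the classes $-[E_i]$ of the exceptional divisors. The proof would therefore consist of verifying the hypotheses of the cited theorem in the present setup and matching the generating classes.

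First I would check that $\varphi: X \to V/G$ is indeed a symplectic resolution to which \cite[Thm.~3.5]{AW2014} applies. Theorem \ref{thmGITquotient} produces the resolution, and Lemma \ref{sympl-resolution-form} upgrades it to a symplectic one once we observe that every exceptional divisor $E_i$ is mapped to the codimension-$2$ fixed locus of the symplectic reflection $T_i$, and that along this locus $\varphi$ is the minimal resolution of a surface of $A_1$-singularities, as noted at the beginning of Section \ref{central-resolution}. The irreducibility of the $E_i$ and their bijection with the five non-central conjugacy classes of symplectic reflections in $G$ follow from the McKay correspondence \ref{KaledinMcKay} together with the description of $G$ in Lemma \ref{group}.

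Second I would match the generators. Using the basis $[L_j]$ dual to the $[C_j]$'s introduced in diagram \ref{class-sequence-1} and the Du Val intersection relations $E_i \cdot C_j = -2\delta_{ij}$, one has $e_i = [-E_i] = 2[L_i]$ inside the ambient lattice $\bigoplus_j \bZ [L_j]$. Thus the non-negative span of the $e_i$ agrees with the cone produced by the cited theorem, and the equality $\Mov(X) = \sum_i \bR_{\geq 0}\, e_i$ is immediate.

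The main point I expect to require care is independence of the statement from the particular choice of resolution $X$: two symplectic resolutions of $V/G$ are $K$-equivalent and are connected by a sequence of Mukai flops in codimension $2$, so their Picard groups, movable cones, and McKay-bases are canonically identified. This identification is itself part of the content of \cite[Thm.~3.5]{AW2014}, so no separate argument is needed, and the lemma then follows uniformly for every resolution in the family, to be enumerated in subsequent subsections.
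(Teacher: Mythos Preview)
Your proposal is correct and matches the paper's approach exactly: the paper simply states that the lemma follows from \cite[Thm.~3.5]{AW2014} and provides no further argument. Your expanded verification of the hypotheses and matching of generators is a reasonable elaboration of what the paper leaves implicit.
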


By Theorem \ref{thmGITquotient} the GIT quotient of $\cR$ by
$\bT_\Lambda$ associated to the character $\kappa$ is a resolution
$\varphi^\kappa: X^\kappa\ra V/G$.

Recall that Table \ref{table_orbits} presents a list of big torus
orbits in the affine space containing $\Spec\cR$ which are relevant
with respect to the finite isotropy and the semistability condition
associated to $\kappa$. Note that divisors $E_i$ are associated to
relevant orbits of type 1D. The intersection $\bigcap_i E_i$ is
associated to the unique relevant orbit of type 5C. In fact, from
\ref{emmbeddP^2_4} we see that this special orbit comes from an
equivariant embedding $\Spec\cR(\bP^2_4)\hookrightarrow\Spec\cR$.

This gives rise to an embedding of GIT quotients $\iota:
\bP^2_4\hookrightarrow X^\kappa$ such that $\iota^*: \Pic X^\kappa
\rightarrow \Pic \bP^2_4$ is an isomorphism. By $F_0$ we will denote
$\iota(\bP^2_4)$. It follows that we can identify
$\NU(X^\kappa)=\NU(\bP^2_4)$ and we have $\Nef(X^\kappa)\subseteq
cone(\alpha_i,\ \beta_i: 0\leq i\leq 4)$, where
$\alpha_i:=(e_i+\kappa)/2$ and $\beta_i:=(-e_i+\kappa)/2$
c.f.~\ref{Nef-Eff-P^2_4}.

Dually, we have isomorphism $\iota_*: \Nu(X)\iso\Nu(\bP^2_4)=
\NU(\bP^2_4)$ and via this identification the classes of $(-1)$ curves
on $F_0\iso\bP^2_4$ are $f_{ij}=(e_i+e_j)/2$. Let $C_{ij}\subset F_0$
be one of these $(-1)$-curves. Then the family of deformations of
$C_{ij}$ is of dimension 2 at least, see
\cite[2.3]{WierzbaWisniewski}, and it must cover a component of a
2-dimensional fiber of $\varphi^\kappa$. Let us call such a component
$F_{ij}$. Since intersection of $C_{ij}$ with the ample class $\kappa$
is 1, the family of deformations of $C_{ij}$ in $F_{ij}$ is unsplit
and of dimension 2, hence every curve in $F_{ij}$ is numerically
proportional to $C_{ij}$, see e.g.~\cite[IV.3.13.3]{KollarRatCurves}.

Because the 2-dimensional fiber of $\varphi^\kappa$ has 11 components
we have a bijection between $C_{ij}$'s and components of this fiber
different from $F_0$. Also, it follows that all curves in the
2-dimensional fiber of $\varphi^\kappa$ have classes in $\Eff(F_0)$
and therefore, dually, $\Nef(X) = \Nef(F_0)$. Therefore a contraction
of the $(-1)$ curve $C_{ij}$ in $F_0$ extends to a small contraction
of $X^\kappa$ and by \cite[Thm.~1.1]{WierzbaWisniewski}
$F_{ij}\iso\bP^2$. Again, because $C_{ij}$ has intersection 1 with the
ample class it follows that it is a line on $F_{ij}$. Thus we have
proved the following.

\begin{proposition}\label{special-resolution}
  There exists a resolution $X^\kappa\rightarrow V/G$ such that
  $\kappa$ is a class of an ample divisor on $X^\kappa$.  The unique
  2-dimensional fiber of the resolution $X^\kappa \rightarrow V/G$
  consists of 11 components:
  \begin{itemize}
  \item the unique component  $F_0=\bigcap_i E_i\iso\bP^2_4$
  \item 10 components $F_{ij}$, for $0\leq i<j\leq 4$, which are
    contained in intersections of $E_k$'s such that $k\not\in\{i,j\}$;
    $F_{ij}\iso\bP^2$
  \end{itemize}
  The intersection $F_0\cap F_{ij}$ is a line on $F_{ij}$ and
  $(-1)$-curve on $F_0$. The line bundle associated to $\kappa$ is
  $-K_{F_0}$ on $F_0$ and $\cO(1)$ on every $F_{ij}$.
\end{proposition}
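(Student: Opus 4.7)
The plan is to collect what has already been established in the paragraphs preceding the statement and to fill in the remaining pieces. Ampleness of $\kappa$ on $X^\kappa$ is immediate from the GIT construction: $\chi^\kappa$ is the chosen linearization, satisfying condition \ref{condition_good_lin} by Proposition \ref{prop_good_linearization}, so the resulting geometric quotient is projective over $V/G$ and polarized by (a positive multiple of) $\kappa$, which together with projectivity over the affine base $\Spec\bC[V]^G$ yields ampleness on $X^\kappa$.

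Next I would identify $F_0$. By Corollary \ref{emmbeddP^2_4} the equivariant embedding $\Spec\cR(\bP^2_4)\hookrightarrow\Spec\cR$ is cut out by $u_0=\cdots=u_4=0$, which is precisely the unique type $5C$ orbit in Table \ref{table_orbits}; by Corollary \ref{inclusion} each $u_k$ is the defining section of $E_k$, so passing to the GIT quotient gives a closed embedding $\iota:\bP^2_4\hookrightarrow X^\kappa$ with image $F_0=\bigcap_{k=0}^{4}E_k\iso\bP^2_4$. The induced isomorphism $\iota^*:\Pic(X^\kappa)\iso\Pic(\bP^2_4)$ together with the identifications of Lemma \ref{Nef-Eff-P^2_4} shows $\kappa|_{F_0}=-K_{\bP^2_4}$.

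The identification of the remaining $10$ fiber components as $F_{ij}\iso\bP^2$ with $F_0\cap F_{ij}$ a line has already been set up directly before the statement: the deformation family of a $(-1)$-curve $C_{ij}\subset F_0$ sweeps out a surface $F_{ij}$, unsplitness follows from $\kappa\cdot C_{ij}=1$, and \cite[Thm.~1.1]{WierzbaWisniewski} gives $F_{ij}\iso\bP^2$ with $C_{ij}$ a line on it. Because $\kappa\cdot C_{ij}=1$, restriction yields $\kappa|_{F_{ij}}=\cO(1)$. The total count of 11 two-dimensional components of the fiber over $[0]$ matches the number of conjugacy classes in $G$ with zero-dimensional fixed locus ($-I$ and the ten pairs $\pm R_{ij}$) via Theorem \ref{KaledinMcKay} and Lemma \ref{group}, forcing a bijection between the geometric components found above and these conjugacy classes.

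The only piece not yet explicit in the preceding discussion is the containment $F_{ij}\subset E_k$ for $k\notin\{i,j\}$. For this I would compute numerically inside the identification $\NU(X^\kappa)\iso\NU(\bP^2_4)$ of Section \ref{central-resolution}: since $e_k=[-E_k]$ and $[C_{ij}]=f_{ij}=(e_i+e_j)/2$, the intersection product of Lemma \ref{Nef-Eff-P^2_4} gives $E_k\cdot C_{ij}=-e_k\cdot f_{ij}=-1<0$ whenever $k\notin\{i,j\}$. Since every curve in $F_{ij}\iso\bP^2$ is numerically a positive multiple of $C_{ij}$, a failure of the containment would make $E_k|_{F_{ij}}$ a non-zero effective divisor on $\bP^2$ meeting a line negatively, which is impossible. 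I do not expect any step to be a serious obstacle; the bookkeeping point that most needs care is ensuring the deformation-theoretic components $F_{ij}$ match the McKay-theoretic components indexed by $\pm R_{ij}$, and this is handled by the symmetric role of the indices $i,j$ on both sides.
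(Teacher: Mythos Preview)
Your proposal is correct and follows essentially the same route as the paper: the argument for Proposition \ref{special-resolution} is really the sequence of paragraphs preceding it, which you have faithfully summarized (GIT ampleness of $\kappa$, the embedding $\iota$ from Corollary \ref{emmbeddP^2_4}, the deformation argument for $C_{ij}$ with \cite[2.3, Thm.~1.1]{WierzbaWisniewski}, and the McKay count of $11$ components). The one place you go slightly beyond the paper is the containment $F_{ij}\subset E_k$ for $k\notin\{i,j\}$: the paper only notes after the proposition that $F_{ij}$ corresponds to the type $3C$ orbits in Table \ref{table_orbits} (where $u_a=u_b=u_c=0$, $w_{de}=0$), whereas your intersection-theoretic argument $E_k\cdot C_{ij}=-e_k\cdot f_{ij}=-1$ gives a clean self-contained proof of this containment without invoking the orbit table.
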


We note that curves $C_{ij}$ can be related to orbits of type 5B while
the components $F_{ij}$ can be related to orbits of type 3C in Table
\ref{table_orbits}. In fact, the arguments above regarding $F_{ij}$'s
can be replaced by direct calculations of quotients of respective
closed subsets of $\Spec\cR$.

\subsection{Flops.}
We can use identification $\iota^*$ introduced in the previous section
to describe the other resolutions of $V/G$ which will be obtained from
$X^\kappa\rightarrow V/G$ by Mukai flops, see \cite{WierzbaWisniewski}.

\begin{figure}
\caption{Flops of symplectic resolutions of $V/G$}
\label{flops-fig}
$$\begin{array}{ccc}
F_0=\bP^2_4&&F_0=\bP^2_3\\ \\
\begin{xy}<45pt,0pt>:
%surfaces
(0,1)*={\pb}="01", (0,1.2)*={F_{01}},
(0.588,0.809)*={\pb}="02", (0.6,1)*={F_{02}}, (0.95,0.27)*{\pb}="12", (1.1,0.45)*={F_{12}},
(-0.588,0.809)*={\pb}="14", (-0.6,1)*={F_{14}}, (-0.95,0.27)*={\pb}="04", (-1.1,0.45)*={F_{04}},
(0.588,-0.809)*={\pb}="23",  (0.6,-1)*={F_{23}}, (0.95,-0.27)*={\pb}="13", (1.1,-0.45)*={F_{13}},
(-0.588,-0.809)*={\pb}="34", (-0.6,-1)*={F_{34}}, (-0.95,-0.27)*={\pb}="03",  (-1.1,-0.45)*={F_{03}},
(0,-1)*={\pb}="24",  (0,-1.2)*={F_{24}},
(1.5,0.9)*={\pbfour}="0", (1.7,1.1)*={F_{0}},
%curves
"01";"24" **@{.}, "01";"23" **@{.}, "01";"34" **@{.},
"02";"34" **@{.}, "02";"13" **@{.}, "02";"14" **@{.},
"03";"12" **@{.}, "03";"14" **@{.}, "03";"24" **@{.},
"04";"12" **@{.}, "04";"13" **@{.}, "04";"23" **@{.},
"12";"34" **@{.}, "13";"24" **@{.}, "14";"23" **@{.},
"0";"01" **@{-}, "0";"02" **@{-}, "0";"03" **@{-}, "0";"04" **@{-},
"0";"12" **@{-}, "0";"13" **@{-}, "0";"14" **@{-},
"0";"23" **@{-}, "0";"24" **@{-}, "0";"34" **@{-},
\end{xy}&{\color{red}\longrightarrow}&
\begin{xy}<45pt,0pt>:
%surfaces
(0,1)*={\pb}="01", (0,1.2)*={F_{01}},
(0.588,0.809)*={\pb}="02", (0.6,1)*={F_{02}}, (0.95,0.27)*{\pbone}="12", (1.1,0.45)*={F_{12}},
(-0.588,0.809)*={\pb}="14", (-0.6,1)*={F_{14}}, (-0.95,0.27)*={\pb}="04", (-1.1,0.45)*={F_{04}},
(0.588,-0.809)*={\pbone}="23",  (0.6,-1)*={F_{23}}, (0.95,-0.27)*={\pbone}="13", (1.1,-0.45)*={F_{13}},
(-0.588,-0.809)*={\pb}="34", (-0.6,-1)*={F_{34}}, (-0.95,-0.27)*={\pb}="03",  (-1.1,-0.45)*={F_{03}},
(0,-1)*={\pb}="24",  (0,-1.2)*={F_{24}},
(1.5,0.9)*={\pbfour}="0", (1.7,1.1)*={F_{0}},
%curves
"01";"24" **@{.}, "01";"23" **@{.}, "01";"34" **@{.},
"02";"34" **@{.}, "02";"13" **@{.}, "02";"14" **@{.},
"03";"12" **@{.}, "03";"14" **@{.}, "03";"24" **@{.},
"04";"12" **@{-}, "04";"13" **@{-}, "04";"23" **@{-},
"12";"34" **@{.}, "13";"24" **@{.}, "14";"23" **@{.},
"0";"01" **@{-}, "0";"02" **@{-}, "0";"03" **@{-}, "0";"04" **@{.},
"0";"12" **@{-}, "0";"13" **@{-}, "0";"14" **@{-},
"0";"23" **@{-}, "0";"24" **@{-}, "0";"34" **@{-},
\end{xy}\\&{\color{red}\swarrow}&\\
%%%%%%%%%%%%%%%%%%%%%%%%%%%%%%%%%%%%%%%%%%%%%%%%%%%%%%%%%%%%%%%%%%%%%%%%
%%%%%%%%%%%%%%%%%%%%%%%%%%%%%%%%%%%%%%%%%%%%%%%%%%%%%%%%%%%%%%%%%%%%%%%%
F_0=\bP^2_2&&F_0=\bP^1\times\bP^1
\\ \\
\begin{xy}<45pt,0pt>:
%surfaces
(0,1)*={\pb}="01", (0,1.2)*={F_{01}},
(0.588,0.809)*={\pb}="02", (0.6,1)*={F_{02}}, (0.95,0.27)*{\pbtwo}="12", (1.1,0.45)*={F_{12}},
(-0.588,0.809)*={\pbone}="14", (-0.6,1)*={F_{14}}, (-0.95,0.27)*={\pb}="04", (-1.1,0.45)*={F_{04}},
(0.588,-0.809)*={\pbone}="23",  (0.6,-1)*={F_{23}}, (0.95,-0.27)*={\pbone}="13", (1.1,-0.45)*={F_{13}},
(-0.588,-0.809)*={\pb}="34", (-0.6,-1)*={F_{34}}, (-0.95,-0.27)*={\pb}="03",  (-1.1,-0.45)*={F_{03}},
(0,-1)*={\pbone}="24",  (0,-1.2)*={F_{24}},
(1.5,0.9)*={\pbfour}="0", (1.7,1.1)*={F_{0}},
%curves
"01";"24" **@{.}, "01";"23" **@{.}, "01";"34" **@{.},
"02";"34" **@{.}, "02";"13" **@{.}, "02";"14" **@{.},
"03";"12" **@{-}, "03";"14" **@{-}, "03";"24" **@{-},
"04";"12" **@{-}, "04";"13" **@{-}, "04";"23" **@{-},
"12";"34" **@{.}, "13";"24" **@{.}, "14";"23" **@{.},
"0";"01" **@{-}, "0";"02" **@{-}, "0";"03" **@{.}, "0";"04" **@{.},
"0";"12" **@{-}, "0";"13" **@{-}, "0";"14" **@{-},
"0";"23" **@{-}, "0";"24" **@{-}, "0";"34" **@{-},
\end{xy}&{\color{red}\longrightarrow}&
\begin{xy}<45pt,0pt>:
%surfaces
(0,1)*={\pbone}="01", (0,1.2)*={F_{01}},
(0.588,0.809)*={\pbone}="02", (0.6,1)*={F_{02}}, (0.95,0.27)*={\pbthreecoll}="12", (1.1,0.45)*={F_{12}},
(-0.588,0.809)*={\pbone}="14", (-0.6,1)*={F_{14}}, (-0.95,0.27)*={\pb}="04", (-1.1,0.45)*={F_{04}},
(0.588,-0.809)*={\pbone}="23",  (0.6,-1)*={F_{23}}, (0.95,-0.27)*={\pbone}="13", (1.1,-0.45)*={F_{13}},
(-0.588,-0.809)*={\pb}="34", (-0.6,-1)*={F_{34}}, (-0.95,-0.27)*={\pb}="03",  (-1.1,-0.45)*={F_{03}},
(0,-1)*={\pbone}="24",  (0,-1.2)*={F_{24}},
(1.5,0.9)*={\pbfour}="0", (1.7,1.1)*={F_{0}},
%curves
"01";"24" **@{.}, "01";"23" **@{.}, "01";"34" **@{-},
"02";"34" **@{-}, "02";"13" **@{.}, "02";"14" **@{.},
"03";"12" **@{-}, "03";"14" **@{-}, "03";"24" **@{-},
"04";"12" **@{-}, "04";"13" **@{-}, "04";"23" **@{-},
"12";"34" **@{-}, "13";"24" **@{.}, "14";"23" **@{.},
"0";"01" **@{-}, "0";"02" **@{-}, "0";"03" **@{.}, "0";"04" **@{.},
"0";"12" **@{-}, "0";"13" **@{-}, "0";"14" **@{-},
"0";"23" **@{-}, "0";"24" **@{-}, "0";"34" **@{.},
\end{xy}

\\{\color{red}\downarrow}\\
%%%%%%%%%%%%%%%%%%%%%%%%%%%%%%%%%%%%%%%%%%%%%%%%%%%%%%%%%%%%%%%%%%%%%%%%%%%
%%%%%%%%%%%%%%%%%%%%%%%%%%%%%%%%%%%%%%%%%%%%%%%%%%%%%%%%%%%%%%%%%%%%%%%%%%
F_0=\bP^2_1\\ \\
\begin{xy}<45pt,0pt>:
%surfaces
(0,1)*={\pb}="01", (0,1.2)*={F_{01}},
(0.588,0.809)*={\pb}="02", (0.6,1)*={F_{02}}, (0.95,0.27)*{\pbtwo}="12", (1.1,0.45)*={F_{12}},
(-0.588,0.809)*={\pbtwo}="14", (-0.6,1)*={F_{14}}, (-0.95,0.27)*={\pb}="04", (-1.1,0.45)*={F_{04}},
(0.588,-0.809)*={\pbone}="23",  (0.6,-1)*={F_{23}}, (0.95,-0.27)*={\pbtwo}="13", (1.1,-0.45)*={F_{13}},
(-0.588,-0.809)*={\pbone}="34", (-0.6,-1)*={F_{34}}, (-0.95,-0.27)*={\pb}="03",  (-1.1,-0.45)*={F_{03}},
(0,-1)*={\pbone}="24",  (0,-1.2)*={F_{24}},
(1.5,0.9)*={\pbfour}="0", (1.7,1.1)*={F_{0}},
%curves
"01";"24" **@{.}, "01";"23" **@{.}, "01";"34" **@{.},
"02";"34" **@{-}, "02";"13" **@{-}, "02";"14" **@{-},
"03";"12" **@{-}, "03";"14" **@{-}, "03";"24" **@{-},
"04";"12" **@{-}, "04";"13" **@{-}, "04";"23" **@{-},
"12";"34" **@{.}, "13";"24" **@{.}, "14";"23" **@{.},
"0";"01" **@{-}, "0";"02" **@{.}, "0";"03" **@{.}, "0";"04" **@{.},
"0";"12" **@{-}, "0";"13" **@{-}, "0";"14" **@{-},
"0";"23" **@{-}, "0";"24" **@{-}, "0";"34" **@{-},
\end{xy}\\{\color{red}\downarrow} \\
%%%%%%%%%%%%%%%%%%%%%%%%%%%%%%%%%%%%%%%%%%%%%%%%%%%
%%%%%%%%%%%%%%%%%%%%%%%%%%%%%%%%%%%%%%%%%%%%%%%%%
F_0=\bP^2&&F_0=(\bP^2)^\vee\\ \\
\begin{xy}<45pt,0pt>:
%surfaces
(0,1)*={\pb}="01", (0,1.2)*={F_{01}},
(0.588,0.809)*={\pb}="02", (0.6,1)*={F_{02}}, (0.95,0.27)*{\pbtwo}="12", (1.1,0.45)*={F_{12}},
(-0.588,0.809)*={\pbtwo}="14", (-0.6,1)*={F_{14}}, (-0.95,0.27)*={\pb}="04", (-1.1,0.45)*={F_{04}},
(0.588,-0.809)*={\pbtwo}="23",  (0.6,-1)*={F_{23}}, (0.95,-0.27)*={\pbtwo}="13", (1.1,-0.45)*={F_{13}},
(-0.588,-0.809)*={\pbtwo}="34", (-0.6,-1)*={F_{34}}, (-0.95,-0.27)*={\pb}="03",  (-1.1,-0.45)*={F_{03}},
(0,-1)*={\pbtwo}="24",  (0,-1.2)*={F_{24}},
(1.5,0.9)*={\pbfour}="0", (1.7,1.1)*={F_{0}},
%curves
"01";"24" **@{-}, "01";"23" **@{-}, "01";"34" **@{-},
"02";"34" **@{-}, "02";"13" **@{-}, "02";"14" **@{-},
"03";"12" **@{-}, "03";"14" **@{-}, "03";"24" **@{-},
"04";"12" **@{-}, "04";"13" **@{-}, "04";"23" **@{-},
"12";"34" **@{.}, "13";"24" **@{.}, "14";"23" **@{.},
"0";"01" **@{.}, "0";"02" **@{.}, "0";"03" **@{.}, "0";"04" **@{.},
"0";"12" **@{-}, "0";"13" **@{-}, "0";"14" **@{-},
"0";"23" **@{-}, "0";"24" **@{-}, "0";"34" **@{-},
\end{xy}&{\color{red}\longrightarrow}&
\begin{xy}<45pt,0pt>:
%surfaces
(0,1)*={\pbone}="01", (0,1.2)*={F_{01}},
(0.588,0.809)*={\pbone}="02", (0.6,1)*={F_{02}}, (0.95,0.27)*{\pquadric}="12", (1.1,0.45)*={F_{12}},
(-0.588,0.809)*={\pquadric}="14", (-0.6,1)*={F_{14}}, (-0.95,0.27)*={\pbone}="04", (-1.1,0.45)*={F_{04}},
(0.588,-0.809)*={\pquadric}="23",  (0.6,-1)*={F_{23}}, (0.95,-0.27)*={\pquadric}="13", (1.1,-0.45)*={F_{13}},
(-0.588,-0.809)*={\pquadric}="34", (-0.6,-1)*={F_{34}}, (-0.95,-0.27)*={\pbone}="03",  (-1.1,-0.45)*={F_{03}},
(0,-1)*={\pquadric}="24",  (0,-1.2)*={F_{24}},
(1.5,0.9)*={\pbfour}="0", (1.7,1.1)*={F_{0}},
%curves
"01";"24" **@{-}, "01";"23" **@{-}, "01";"34" **@{-},
"02";"34" **@{-}, "02";"13" **@{-}, "02";"14" **@{-},
"03";"12" **@{-}, "03";"14" **@{-}, "03";"24" **@{-},
"04";"12" **@{-}, "04";"13" **@{-}, "04";"23" **@{-},
"0";"01" **@{-}, "0";"02" **@{-}, "0";"03" **@{-}, "0";"04" **@{-},
"0";"12" **@{.}, "0";"13" **@{.}, "0";"14" **@{.},
"0";"23" **@{.}, "0";"24" **@{.}, "0";"34" **@{.},
\end{xy}
\end{array}$$
\end{figure}
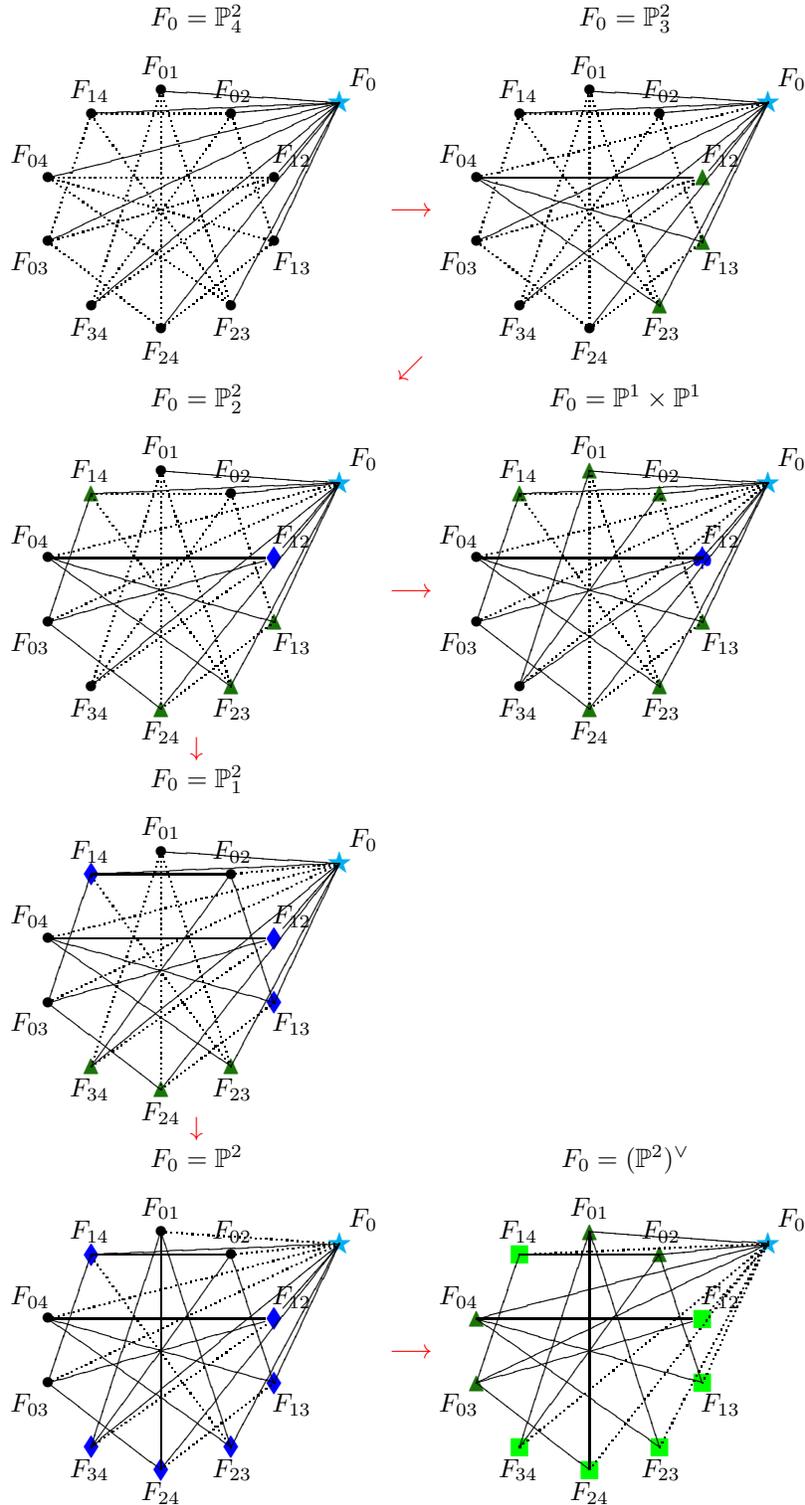

Our description is similar to that of \cite[Sect.~6.7]{AW2014}.
Figure \ref{flops-fig} illustrates the components of the 2-dimensional fiber of
these resolutions and their incidence.  The distinguished {\em
  central} component $F_0$ is always denoted by $\pbfour$. Each of the
diagrams in the table is described by the isomorphism class of this
component.  By $(\bP^2)^\vee$ we denote the central component after
the flop. Also the other components of the 2-dimensional fiber are
denoted by the same name $F_{ij}$ for every resolution.  Their
isomorphisms types are described as follows: $\pb=\bP^2$,
$\pbone=\bP^2_1$, $\pbtwo=\bP^2_2$, $\pquadric= \bP^1 \times \bP^1$
and $\pbthreecoll=\bP^2_{\overline{3}}$ is the blow-up of $\bP^2$ in
three collinear points.

The incidence of components is denoted by line segments joining the
respective symbols. The solid line denotes intersection along a
rational curve while a dotted line denotes intersection at a
point. For the sake of clarity we ignore the intersection (at a point)
of components which will not be flopped.

The first diagram in Figure \ref{flops-fig} illustrates the special fiber
of the unique central resolution in which the central component is
$\bP^2_4$ and the remaining components are $\bP^2$. The other
resolutions are obtained by flopping some components which are
isomorphic to $\bP^2$, the direction of the flops is indicated by
arrows. That is, via the identification of $\NU(X)$ with
$\NU(F_0)=\NU(\bP^2_4)$, the ample cone of $\bP^2_4$ is placed in the
center of the movable cone of $X$ and the direction of our flops
points out outside the central chamber.

The first two flops are along $F_{04}$ and $F_{03}$ and they lead to
the central component isomorphic to, respectively, $\bP^2_3$ (there
are 10 different resolutions of this type) and $\bP^2_2$ (there are 30
different resolutions of this type). The surface $\bP^2_2$ has three
$(-1)$-curves which make a chain, contracting the central one we get
$\bP^1\times\bP^1$ while contracting any of the other two we get
$\bP^2_1$. Respectively, we can consider either a flop along $F_{24}$
which makes $F_0$ isomorphic to $\bP^1\times\bP^1$ (there are 10
resolutions of this type) or flop along $F_{02}$ and get $F_0=\bP^2_1$
(there are 20 resolutions of this type). This latter one type can be
further flopped along $F_{01}$ to get $F_0=\bP^2$ (5 resolutions of
this type). Finally, $F_0$ can be flopped, we denote it then by
$(\bP^2)^\vee$; there are 5 resolutions of this type we will denote
them $X^i\rightarrow V/G$. The cone $\Nef(X^i)$ is simplicial and it
is generated by $e_i$ and by four classes $(e_i+e_j)/2$, with $j\ne
i$; we will call it an {\em outer} chamber of $\Mov(X)$.

Note that, after the first flop, the surfaces $F_{23}$, $F_{24}$ and
$F_{34}$ have a common point, which comes from contracting the $(-1)$
curve on $F_0$. We ignore it in our diagram since none of these three
components will be flopped. Similarly, we will not put in the diagrams
the intersection points which will be negligible from the point of
view of possible flops.

Counting the number of resolutions we obtain a result already announced
by Bellamy, \cite{Bellamy81}.

\begin{proposition}\label{81resolutions}
  There are 81 symplectic resolutions of the quotient singularity
  $V/G$.
\end{proposition}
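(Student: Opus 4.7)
The plan is to count the chambers in a wall-and-chamber decomposition of the movable cone $\Mov(X)$. By general results on birational geometry of symplectic varieties (see \cite{Namikawa2,WierzbaWisniewski,AW2014}), symplectic resolutions of $V/G$ are in bijection with the chambers in the movable cone of any one of them, and every such chamber is the ample cone of some symplectic resolution reached by a sequence of Mukai flops. So the proposition reduces to counting these chambers for $X=X^\kappa$.

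First I would identify $\Mov(X)$ inside $\NU(X)$. By Lemma \ref{mov-cone}, $\Mov(X)=\operatorname{cone}(e_0,\dots,e_4)$, and via the identification $\iota^\ast:\NU(X^\kappa)\iso\NU(\bP^2_4)=W$ from Proposition \ref{special-resolution} this is exactly the cone generated by the $e_i$ in the notation of Lemma \ref{Nef-Eff-P^2_4}. The subcone $\Eff(\bP^2_4)=\operatorname{cone}(f_{ij}:0\le i<j\le 4)$, with $f_{ij}=(e_i+e_j)/2$, sits strictly inside $\Mov(X)$; by Lemma \ref{Nef-Eff-P^2_4} its boundary in $\Mov(X)$ consists of exactly the five simplicial facets associated to the morphisms $\beta_i$, and beyond each such facet lies a single extra simplicial chamber $\operatorname{cone}(e_i,(e_i+e_j)/2:j\ne i)$.

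Next I would describe the chamber structure in two pieces. Inside $\Eff(\bP^2_4)$ one has the Zariski chamber decomposition recalled in Table \ref{ZariskiP^2_4}, contributing $1+10+30+20+5+10=76$ chambers. The five ``outer'' chambers just described, each attached across a facet of type $\beta_i$, contribute another $5$ chambers, for a total of $81$. What remains is to match this combinatorics with symplectic resolutions: each Zariski chamber of $\Eff(\bP^2_4)$ corresponds, via the Mukai flops analyzed in Section \ref{geom}, to a resolution whose distinguished $2$-dimensional fiber has central component $F_0$ isomorphic to the corresponding image surface in Table \ref{ZariskiP^2_4} (with $F_0=\bP^2_4$ on the nef chamber), while each of the five outer chambers corresponds to a resolution $X^i$ in which $F_0$ has been Mukai-flopped to $(\bP^2)^\vee$ along the facet $\beta_i$.

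The main obstacle is the last matching step. One has to verify that crossing each wall of $\Eff(\bP^2_4)$ inside $\Mov(X)$ corresponds to flopping exactly the component $F_{ij}\iso\bP^2$ (or the relevant chain, for the $\bP^1\times\bP^1$ walls) indicated by the $(-1)$-curve structure of the current $F_0$, and that crossing a facet of type $\beta_i$ corresponds to the Mukai flop of the central component $F_0$ along the four lines $C_{ij}$, $j\ne i$, producing $(\bP^2)^\vee$. Both claims follow from the description of the $2$-dimensional fiber in Proposition \ref{special-resolution} together with \cite[Thm.~1.1]{WierzbaWisniewski}, since each $F_{ij}\iso\bP^2$ in $X^\kappa$ admits an obvious Mukai flop, and the whole $F_0$ itself, being a smooth Lagrangian $\bP^2_4$-bundle over a point with four disjoint Lagrangian lines meeting in a point only after flops, also admits one. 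Once this identification is established, the count $76+5=81$ is exactly the number of symplectic resolutions.
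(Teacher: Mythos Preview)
Your approach is essentially the same as the paper's: the proposition is not given a separate proof there but follows from the enumeration of flops in Figure~\ref{flops-fig} and the surrounding text, which counts $1+10+30+10+20+5+5=81$ resolutions indexed by the isomorphism type of the central component $F_0$. Your packaging via the Zariski chambers of $\Eff(\bP^2_4)$ ($76$ chambers from Table~\ref{ZariskiP^2_4}) plus the $5$ outer simplicial chambers is exactly the same count, and the paper makes this chamber interpretation explicit in the sentence immediately following the proposition.

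One point in your last paragraph is garbled and worth cleaning up. Crossing a simplicial facet of $\Eff(\bP^2_4)$ (the one associated to the morphism $\beta_i$ in Lemma~\ref{Nef-Eff-P^2_4}) is the Mukai flop of $F_0$ itself, but only \emph{after} $F_0$ has already been reduced to $\bP^2$ by four prior flops of the components $F_{0j}$; it is not a flop ``along the four lines $C_{ij}$'', and $F_0$ in the central resolution $X^\kappa$ is $\bP^2_4$, not a floppable $\bP^2$. Your phrase ``smooth Lagrangian $\bP^2_4$-bundle over a point with four disjoint Lagrangian lines meeting in a point only after flops'' does not parse and should be replaced by the straightforward statement that once $F_0\cong\bP^2$ one applies \cite[Thm.~1.1]{WierzbaWisniewski} to it. With that correction, your argument matches the paper's.
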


In fact, from our construction it follows that symplectic resolutions
of $V/G$ are in bijection with chambers in the cone
$cone(e_0,\dots,e_4)$ obtained by cutting this cone with hyperplanes
perpendicular to classes $f_{ij}$ and $\beta_i$ which were defined in
subsection \ref{P^2_4}.

%-------------------------------------------------------------------%

\section{A Kummer 4-fold}

\subsection{A Kummer surface}\label{Kummer-surface}
Let $\bE$ be an elliptic curve with the complex multiplication by
$i=\sqrt{-1}$. That is we have a linear automorphism $i\in
Aut(\bE)$ such that $i^2=-id_\bE$. For simplicity, we can assume
$\bE=\bC/\bZ[i]$ and $i$ acts by the standard complex
multiplication. The automorphism $i$ of $\bE$ has two fixed points
$p_0=[0]$ and $p_1=[(1+i)/2]$ while it interchanges the other two
order 2 points $i[1/2]=[i/2]$ and $i[i/2]=[1/2]$; here the square
brackets denote the classes in $\bC/\bZ[i]$. We see that, in fact,
this multiplication yields an isomorphism of the group of order 2
points on $\bE$ with the ring $\bZ_2[i]$ with $p_1$ identified with the
unique zero divisor $1+i$.

Let us recall that the standard representation of the binary dihedral
group of order 8, or the quaternion group, $Q_8$ in $SL(2,\bC)$ is
given by the following $2\times 2$ matrices over $\bC$:
$$\begin{array}{ccc}
A_I=\left(\begin{array}{cc}i&0\\0&-i\end{array}\right)&
A_J=\left(\begin{array}{cc}0&1\\-1&0\end{array}\right)&
A_K=\left(\begin{array}{cc}0&i\\i&0\end{array}\right)
\end{array}$$

Since, in fact, this representation is in $SL(2,\bZ[i])$, we have the
action of $Q_8$ on $\bE^2$. It is not hard to check (see the argument
below) that the group $Q_8$ acts on $\bE^2$ with 2 fixed points,
namely $(p_0,p_0)$ and $(p_1,p_1)$, 6 point with isotropy $\bZ_4$ and
8 points with isotropy $\bZ_2$. This makes 16 points with non-trivial
isotropy. In fact, they are all order 2 points on $\bE^2$, because
$-id$ is contained in every nontrivial subgroup of $Q_8$.

If $n_8$, $n_4$ and $n_2$ are the number of orbits of points with the
isotropy $Q_8$, $\bZ_4$ and $\bZ_2$, respectively, then taking into
consideration the ranks of the normalizers of these subgroups we get
$n_8=2$, $n_4=3$, $n_2=2$ and $n_8+2n_4+4n_2=16$. Thus on the quotient
$\bE^2/Q_8$ we have $n_8=2$ singularities of type $D_4$ and $n_4=3$
singularities of type $A_3$ and $n_2=2$ singular points of type
$A_1$. Resolving these singularities we obtain a K3 surface with
$4\cdot n_8+ 3\cdot n_4+ 1\cdot n_2 = 19$ exceptional
$(-2)$-curves. We note that $\dim_\bC H^{1,1}(\bE^2)^{Q_8}=1$ completes
this number to the dimension of $H^{1,1}$ of a Kummer surface.

More generally, by the above argument, for any representation of $Q_8$
in $Aut(\bE^2)$ the numbers $n_8$, $n_4$ and $n_2$ defined above satisfy
the following two equations
$$\begin{array}{r}n_8+2n_4+4n_2=16\\4n_8+3n_4+n_2=19\end{array}$$
Clearly the numbers $n_i$ are non-negative integers and $n_8>0$. We
find out that there are two solutions of this system:
\begin{equation}\label{no_sing_pts}
(n_8,n_4,n_2)=(2,3,2)\ \ {\rm or}\ \ (n_8,n_4,n_2)=(4,0,3).
\end{equation}

The latter solution is satisfied for the following representation
of $Q_8$ in $Aut(\bE^2)$:
$$\begin{array}{ccc}
B_I=\left(\begin{array}{cc}i&0\\i+1&-i\end{array}\right)&
B_J=\left(\begin{array}{cc}-i&i-1\\0&i\end{array}\right)&
B_K=\left(\begin{array}{cc}1&-1-i\\1-i&-1\end{array}\right)
\end{array}$$
The fixed points for this representation are
$(p_0,p_0),\ (p_0,p_1),\ (p_1,p_0),\ (p_1,p_1)$.
Therefore the remaining order two points have isotropy equal to $\bZ_2$.
\medskip

It is convenient to describe the action of the group $Q_8$ in terms of
$\bZ_2[i]$ modules.  By $M^r$ we will denote the free module
$(\bZ_2[i])^{\oplus r}$ and by $M_0^r$ its sub-module $(1+i)\cdot M^r$
which consists of elements annihilated by $1+i$.  As noted above, the
points with non-trivial isotropy with respect to the action of $Q_8$
on $\bE^2$ are of order 2 as $-id$ is contained in every non-trivial
subgroup of $Q_8$. Thus, in fact, in order to understand isotropy of
the action of $Q_8$ on $\bE^2$ we can look into the action of
$Q_8/\langle -id\rangle$ on the set of order two points on $\bE^2$
which is the free $\bZ_2[i]$-module $M^2$. That is, we focus on
representations of $Q_8/\langle -id\rangle=\bZ_2^{\oplus 2}$ in
$SL(2,\bZ_2[i])$.

From this point of view the first representation of $Q_8$ is reduced
to
$$\begin{array}{ccc}
A_I=\left(\begin{array}{cc}i&0\\0&i\end{array}\right)&
A_J=\left(\begin{array}{cc}0&1\\1&0\end{array}\right)&
A_K=\left(\begin{array}{cc}0&i\\i&0\end{array}\right)
\end{array}$$
while the second is
$$\begin{array}{ccc}
B_I=\left(\begin{array}{cc}i&0\\i+1&i\end{array}\right)&
B_J=\left(\begin{array}{cc}i&i+1\\0&i\end{array}\right)&
B_K=\left(\begin{array}{cc}1&i+1\\1+i&1\end{array}\right)
\end{array}$$

Now the claims about the fixed points sets are easy to verify on
$M^2$. Note that the second representation fixes the points whose
coordinates are annihilated by $1+i$ which is the module $M^2_0$.

\subsection{A symplectic Kummer 4-fold}

After discussing the 2-dimensional case we pass to dimension 4.  We
use the notation consistent with the preceding subsection. First, we
prove somewhat more general result on symplectic Kummer 4-folds.

\begin{proposition}\label{Kummer4fold-1}
  Suppose that $G'<SL(4,\bZ[i])$ is a finite subgroup such that
  \begin{itemize}
  \item as a subgroup of $SL(4,\bC)\supset SL(4,\bZ[i])$ $G'$ is
    conjugate to the group $G$ and, in particular, it is generated by
    five order 2 matrices $T_i'$;
  \item the reduction $G'/\langle -id\rangle \ra SL(4,\bZ_2[i])$ acts
    on $M^4$ so that its action on $M^4_0$ is trivial and every
    element in $M^4\setminus M^4_0$ has isotropy generated by $[\pm
    T_i']$ for some $i$.
  \end{itemize}
  Then $G'$ acts on $\bE^4$ and the quotient $\bE^4/G'$ admits a
  symplectic resolution $X$ such that its Betti numbers are as
  follows: $b_2(X)=b_6(X)=23$ and $b_4(X)=276$.
\end{proposition}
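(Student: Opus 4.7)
Since $G'\subset SL(4,\bZ[i])$ preserves the lattice $\bZ[i]^4$, the linear action descends to $\bE^4=\bC^4/\bZ[i]^4$. If $A\in SL(4,\bC)$ realizes the conjugacy $G'=AGA^{-1}$, then $(A^{-1})^*\omega$ is a constant $G'$-invariant non-degenerate $2$-form on $\bC^4$ and descends to a holomorphic symplectic form on $\bE^4$ that survives on the smooth locus of $Y':=\bE^4/G'$. By Lemma \ref{matrices-properties} the only elements of $G'$ whose $1$-eigenspace is positive-dimensional are $I$ and $\pm T_i'$ (each fixing a $2$-plane). Combining this with the hypothesis on the $G'/\langle -I\rangle$-action on $M^4$ and the observation that $-I$ fixes every $2$-torsion point, the isotropy groups stratify $\bE^4$ into: the $16$ points of $M^4_0$ with stabilizer $G'$; the $240$ points of $M^4\setminus M^4_0$ ($30$ orbits) with stabilizer $\langle T_i',-I\rangle\cong\bZ_2^{\oplus 2}$; and the remaining points of $\bigcup_i\mathrm{Fix}(T_i')$ with stabilizer $\langle T_i'\rangle$.

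Locally $Y'$ has, correspondingly, the three singularity types $V/G$ (resolved by Theorem \ref{main_theorem}), $A_1\times A_1$ (obtained from the $\pm 1$-eigenspace decomposition of the tangent space under $T_i'$ and resolved as the product of two minimal surface resolutions), and a transverse $A_1$-singularity along a $2$-dimensional stratum. These local symplectic resolutions are compatible along the incidences of strata and glue to a global projective symplectic resolution $\varphi\colon X\to Y'$; the symplectic form extends by Lemma \ref{sympl-resolution-form}.

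For the Betti numbers I would apply the Chen-Ruan (orbifold) cohomology description of a symplectic crepant resolution, valid here by combining Kaledin's McKay correspondence (\ref{KaledinMcKay}) locally with a global gluing argument:
\[
H^k(X,\bQ)\ \cong\ \bigoplus_{[g]\in\mathrm{Conj}(G')} H^{k-2a(g)}\bigl(\mathrm{Fix}(g)/C_{G'}(g),\bQ\bigr),
\]
with integer ages $a(I)=0$, $a(\pm T_i')=1$, $a(-I)=a(\pm R_{ij}')=2$. The contributions are: $[I]$ gives $H^*(\bE^4)^{G'}$, computed from the Hodge decomposition of $H^*(\bE^4)=\bigwedge^*(V^*\oplus\bar V^*)$ using irreducibility of $V$ as a $G$-representation and the decomposition $\bigwedge^2V=W\oplus\bC\omega$ (so that for example $(\bigwedge^2 V^*)^G=\bC\omega$ and $\mathrm{End}_G V=\bC$); $[-I]$ contributes $|M^4/G'|=16+30=46$ classes in $b_4$; each of the ten classes $[\pm R_{ij}']$ contributes the number of $C_{G'}(R_{ij}')$-orbits of the finite set $\mathrm{Fix}(R_{ij}')$ to $b_4$; and each of the five classes $[\pm T_i']$ contributes $H^{*-2}\bigl(\mathrm{Fix}(T_i')/C_{G'}(T_i')\bigr)$, where $\mathrm{Fix}(T_i')$ is a $2$-dimensional abelian subvariety of $\bE^4$ and $C_{G'}(T_i')/\langle T_i'\rangle\cong Q_8$ acts on it exactly as in the Kummer surface analysis of \ref{Kummer-surface}. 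Summing these contributions yields $b_2=23$ and $b_4=276$; $b_6=b_2$ then follows from Poincar\'e duality, and the odd Betti numbers vanish because the corresponding summands in the decomposition do.

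The principal obstacle is the $T_i'$-contribution: one must pin down the number of connected components of $\mathrm{Fix}(T_i')\subset\bE^4$, which is governed by how $\bZ[i]^4$ restricts to the $T_i'$-invariant plane, and then compute the Betti numbers of the $Q_8$-quotient of each component along the lines of \ref{Kummer-surface}, together with the Hodge-theoretic calculation of $H^*(\bE^4)^{G'}$. A careful bookkeeping of these pieces is what produces exactly the numerical values $23$ and $276$.
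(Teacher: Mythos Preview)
Your orbifold-cohomology approach is a genuine alternative to the paper's argument, and the setup (isotropy stratification, local models, gluing) matches the paper's. The essential difference is in how the Betti numbers are obtained.

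The paper does \emph{not} compute $b_2$ and $b_4$ directly. Instead it argues as follows: the exceptional divisors of $X\to \bE^4/G'$ are in bijection with the $2$-dimensional components of the singular locus, and each such component is a $Q_8$-quotient of a component of some $\mathrm{Fix}(T_i')$. By the Kummer-surface analysis of \S\ref{Kummer-surface} each such quotient carries at most four $Q_8$-fixed points, and these are exactly the $G$-type singular points lying on it. Since there are $16$ $G$-type points, each on $5$ components, one gets at least $16\cdot 5/4=20$ components, hence $b_2(X)\ge 23$. The paper then invokes Guan's constraints on Betti numbers of compact hyperk\"ahler $4$-folds to conclude $b_2=23$ and $b_4=276$. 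Thus the paper only ever needs an \emph{inequality} and never has to determine the number of components of $\mathrm{Fix}(T_i')$ or how $Q_8$ permutes them.

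Your approach, by contrast, requires the exact orbifold count. This is not hopeless: from the hypotheses one can show that $\mathrm{Fix}(T_i')\cap M^4 = M^4_0\,\cup\,\{p\in M^4\setminus M^4_0:\mathrm{Stab}(p)=\langle T_i',-I\rangle\}$, and since each component of $\mathrm{Fix}(T_i')$ (a coset of an abelian surface) meets $M^4$ in $16$ points while the five sets on the right partition the $256$ points of~$M^4$, one gets $\sum_i c_i=20$ for the total number of components. But you then still need that $Q_8$ does not identify distinct components, and you need the precise values of $\dim H^4(\bE^4)^{G'}$ and of $H^2$ of each Kummer-type surface $\mathrm{Fix}(T_i')^0/Q_8$. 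These are all computable, but the paper's route via Guan sidesteps exactly the ``principal obstacle'' you flag: one never has to resolve the lattice question of how many components $\mathrm{Fix}(T_i')$ has, only to bound it.

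So: no genuine error, but your plan trades a single citation (Guan) for a substantial bookkeeping exercise; the paper's argument is strictly shorter for this reason.
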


\begin{proof}
  Clearly $G'<SL(4,\bZ[i])$ yields an action of $G'$ on $\bE^4$.  We
  claim that any nontrivial isotropy group of this action which is
  different from a symplectic reflection $T_i'$ is actually isomorphic
  and conjugate in $SL(4,\bC)$ to $G$ or to a group $\langle
  T_i',-T_i'\rangle$. This follows from the fact that every subgroup
  of $G$ which is different from $\langle T_i\rangle$ contains $-id$
  hence it is isotropy of a point on $\bE^4$ which is of order 2. Thus
  the isotropy of the action of $G'$ on $\bE^4$ can be understood by
  looking at the action of $G'/\langle -id\rangle$ on the module $M^4$
  and the claim follows.

  Locally both $\bC^4/G$ and $\bC^4/\langle \pm T_i\rangle$ admit
  symplectic resolutions. These resolutions can be glued to the global
  symplectic resolution $X$ of $\bE^4/G'$ because outside isolated
  points which are images of the order 2 points in $\bE^4$ the
  singularities are 2-dimensional families of $A_1$ surface
  singularities whose resolution is unique.

  Thus it remains to calculate the Betti numbers of $X$. For this it
  is enough to find the number of the irreducible 2-dimensional
  components of the singular locus of $\bE^4/G'$. The normalization of
  such components is a quotient of the fixed point set of some $T_i'$
  by the group $N(T_i')/\langle T_i'\rangle=Q_8$ which, as we noted in
  the preceding subsection \ref{no_sing_pts}, has either 2 or 4 points
  of isotropy equal to $Q_8$. On the other hand, $\bE^4/G'$ has
  $16=|M^4_0|$ singular points of type $\bC^4/G$ and each of them
  belongs to 5 different 2-dimensional components of the singular
  locus of $\bE^4/G'$. Since each such component contains 4 such points
  at most it follows that the number of these components is $16\cdot
  5/ 4= 20$ at least. Now the result follows by \cite{Guan}.
\end{proof}

From the above calculations one can conclude that any resolution $X\ra
\bE^4/G'$ contracts 20 exceptional divisors and it has 16 fibers of
type discussed in Section \ref{geom} and 30 fibers which are
isomorphic to $\bP^1\times\bP^1$. 

Indeed, if $G'$ satisfies assumptions of Proposition
\ref{Kummer4fold-1} then $2^4$ out of $2^8$ 2-torsion points in $\bE^4$
have isotropy equal to $G'$ while the remaining $2^8-2^4 = 240$ points
have isotropy equal to the group $\bZ_2\times\bZ_2$ generated by $\pm
T_i$: these subgroups are normal and of index 8 hence in the quotient
we get $240/8 = 30$ points with quotient singularities of type
$\bC^4/\langle \pm T_i\rangle$, the resolution of which is known to
have $\bP^1\times\bP^1$ as the 2-dimensional central fiber.

Although the number of such resolutions is $81^{16}$ (some of these
resolutions lead to non-projective varieties) there exists a unique
special resolution which over each of the 16 points in $\bE^4/G'$ with
singularity of type $\bC^4/G$ is of type $\varphi^\kappa$ described in
\ref{central-resolution}.

\medskip Let us consider the following 5 matrices defined over
$\bZ[i]$:

$$\begin{array}{cc}
T_0'=\left(\begin{array}{cccc}
1&0&0&0\\0&-1&0&0\\0&-1+i&1&0\\1-i&0&0&-1
\end{array}\right)
&
T_1'=\left(\begin{array}{cccc}
i&-1-i&0&1-i\\0&-i&-1+i&0\\0&-1-i&i&0\\1+i&0&-1-i&-i
\end{array}\right)
\end{array}$$
$$\begin{array}{cc}
T_2'=\left(\begin{array}{cccc}
1&0&0&-1-i\\0&-1&1+i&0\\0&0&1&0\\0&0&0&-1
\end{array}\right)
&
T_3'=\left(\begin{array}{cccc}
i&0&0&1-i\\1-i&-i&-1+i&0\\0&-1-i&i&1-i\\1+i&0&0&-i
\end{array}\right)
\end{array}$$
$$T_4'=\left(\begin{array}{cccc}
1&-1+i&0&-1-i\\-1-i&-1&1+i&0\\0&-1+i&1&-1-i\\1-i&0&-1+i&-1
\end{array}\right)$$

\begin{lemma}\label{Kummer4fold-2}
  The group $G'$ generated by matrices $T_i'$ satisfies the assumption
  of Proposition \ref{Kummer4fold-1}.
\end{lemma}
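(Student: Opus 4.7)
The plan is to verify by direct computation that the two hypotheses of Proposition~\ref{Kummer4fold-1} hold for the explicit matrices $T_0',\dots,T_4'$. The argument splits naturally into an abstract-group step, a conjugacy step, and a mod-$(1+i)$ arithmetic step.

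First I would verify the abstract group structure. Compute $(T_i')^2$ for $i=0,\dots,4$ and check each equals $I$; then compute all commutators $[T_i',T_j']$ and the product $T_0'T_1'T_2'T_3'T_4'$, and verify they satisfy the relations of Lemma~\ref{matrices-properties}. Since the presentation of $G$ given by those relations has order at most $32$ (every element of $G$ is a short word in the $T_i$ and $R_{ij}$, by Lemma~\ref{group}), verifying the relations automatically produces a surjection $G\twoheadrightarrow G'$. Equality of orders, and thus isomorphism, follows once one exhibits any nontrivial element of $G'$; e.g.\ $(T_0'T_1')^2$ will be computed as $-I$, so $|G'|=32$.

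Second, for conjugacy to $G$ inside $SL(4,\bC)$, the cleanest route is representation-theoretic: up to conjugacy, $G$ admits a unique faithful irreducible four-dimensional complex representation preserving a symplectic form (the representation is determined by the characters, which are forced by the relations in Lemma~\ref{matrices-properties} together with the requirement that each $T_i'$ is an order-$2$ symplectic reflection). So it suffices to check that the $G'$-module $\bC^4$ is irreducible and that $G'\subseteq Sp(4,\bC)$. Irreducibility follows because otherwise $G'$ would be conjugate to a block-diagonal group of smaller order. To find the invariant symplectic form, solve the linear system $(T_i')^{t}\Omega T_i'=\Omega$ for $i=0,\dots,4$ and check that the solution space contains a non-degenerate antisymmetric $\Omega$.

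Third, and this is the crucial arithmetic step, I would reduce the matrices modulo $2$ and analyse the $\bZ_2[i]$-linear action on $M^4=\bZ_2[i]^4$. To see that $G'/\langle -I\rangle$ acts trivially on $M^4_0=(1+i)M^4$, it is enough to check that
\[
(T_i'-I)(1+i)\equiv 0 \pmod{2\bZ[i]}
\]
as $4\times 4$ matrices, for each $i=0,\dots,4$. For instance
\[
(T_0'-I)(1+i)=\begin{pmatrix}0&0&0&0\\ 0&-2-2i&0&0\\ 0&-2&0&0\\ 2&0&0&-2-2i\end{pmatrix},
\]
every entry of which lies in $2\bZ[i]$; the other four verifications are of the same type.

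Fourth, I would determine the isotropy of each of the $15$ nonzero cosets of $M^4_0$ in $M^4$ under the quotient group $G'/\langle -I\rangle\cong\bZ_2^4$. The expectation, compatible with the proof of Proposition~\ref{Kummer4fold-1}, is that every such isotropy is either trivial or of the form $\langle[T_i']\rangle$ for some $i$. The orbit-count constraint $240=16m+8n$ together with the $30$ expected singularities of type $\bC^4/\langle\pm T_i\rangle$ (the $\bP^1\times\bP^1$ fibres recorded in the discussion after Proposition~\ref{Kummer4fold-1}) forces $(m,n)=(0,30)$, so each $[T_i']$ must fix exactly $16$ elements of $M^4\setminus M^4_0$, i.e.\ the fixed subspace of the reduction $\overline{T_i'}\in GL_4(\bZ_2[i])$ must have order $32$, of which $16$ lie in $M^4_0$. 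Concretely I would row-reduce $\overline{T_i'}-I$ over $\bZ_2[i]$, compute the kernel, and verify that its intersection with $M^4\setminus M^4_0$ has size $16$, and that for distinct $i\neq j$ the intersection $\ker(\overline{T_i'}-I)\cap\ker(\overline{T_j'}-I)$ is contained in $M^4_0$ (so no point has larger isotropy). The main obstacle is this bookkeeping: it is a finite but delicate enumeration over a ring with zero divisors, best executed in a computer algebra system, but it is the only place where the particular choice of the arithmetic lifts $T_i'$ (as opposed to any other lift of $T_i$) matters.
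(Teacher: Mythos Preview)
Your overall strategy is correct and, in the arithmetic step, essentially matches the paper's. The paper's proof is more economical in one place: rather than verifying the relations of Lemma~\ref{matrices-properties} and then invoking the uniqueness of the faithful $4$-dimensional irreducible representation of $G$, it simply writes down an explicit matrix $W\in GL(4,\bC)$ with $T_i'=W^{-1}T_iW$ for every $i$. This single conjugation simultaneously establishes the abstract isomorphism $G\cong G'$ and the conjugacy in $SL(4,\bC)$, and it sidesteps the need to check irreducibility or find an invariant symplectic form. Your representation-theoretic route is valid, but the sentence ``otherwise $G'$ would be conjugate to a block-diagonal group of smaller order'' is not the right justification; the correct point is that every sum of $1$-dimensional characters of $G$ kills $-I$, so the only \emph{faithful} $4$-dimensional representation is the unique irreducible one.

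In the fourth step there are two slips worth flagging. First, working with the $15$ nonzero cosets of $M_0^4$ in $M^4$ is a false start: since $(T_i'-I)$ maps $M^4$ into $M_0^4$, the induced action on $M^4/M_0^4$ is trivial, so every coset has full isotropy. You must look at the $240$ elements of $M^4\setminus M_0^4$ individually. Second, your cardinality $32$ for $\ker(\overline{T_i'}-I)$ is off by a factor of two: over $\bZ_2[i]$ the kernel has the form $M_0^4+K_i$ with $K_i$ free of rank $2$, and $\lvert M_0^4+K_i\rvert=64$, giving $48$ fixed points outside $M_0^4$ for each $i$; then $5\times 48=240$ exactly covers $M^4\setminus M_0^4$. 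This is precisely the paper's counting. Once you have that, the check that $K_i\cap K_j=0$ (equivalently $K_i+K_j=M^4$) for $i\neq j$ forces every element of $M^4\setminus M_0^4$ to be fixed by exactly one $[T_i']$, as required. The orbit-count argument $240=16m+8n$ drawing on the discussion after Proposition~\ref{Kummer4fold-1} is circular and should be dropped; the direct cardinality count already gives the conclusion.
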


\begin{proof}
  Let us define
  $$ W=\left(\begin{array}{cccc}
      1&1/2-i/2&-1&0\\1&-1/2+i/2&0&-1-i\\0&-1/2-i/2&i&0\\0&1/2+i/2&0&0
    \end{array} \right)$$
  Then for $i=0,\dots,4$ it holds $T_i'=W^{-1}\cdot T_i\cdot W$
  where $T_i$'s are the matrices from the list \ref{matrices}.
  Clearly, the reduction of each of $T_i'$ to $M^4$
  (which by abuse we denote by $T_i'$ as well)
  is identity on $M^4_0$
  and it remains to check that the isotropy of every element from
  $M^4\setminus M^4_0$ is generated by some $T_i'$. This can be done as follows:
  one can write $\ker(T_i'-id)$ as $M^4_0+K_i$ where $K_i$ is a rank 2
  free $\bZ_2[i]$-module. Next one checks that for $i\ne j$ it holds $K_i+K_j=M^4$
  and thus $K_i\cap K_j=\{0\}$. Finally, because $|(M_0^4+K_i)\setminus M_0^4|=48$ it
  follows that $|M^4|=|M_0^4|+\sum_i|(M_0^4+K_i)\setminus M_0^4|$ and therefore every element of
  $M^4\setminus M^4_0$ belongs to exactly one $(M_0^4+K_i)\setminus M^4_0$. We omit calculations.
\end{proof}

\begin{corollary}\label{Kummer4fold-3}
  The quotient $\bE^4/G'$ has a resolution which is a Kummer
  symplectic 4-fold $X$ with  $b_2(X)=b_6(X)=23$ and $b_4(X)=276$.
\end{corollary}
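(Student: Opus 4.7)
The plan is to deduce the corollary directly from the two preceding results. First, I would invoke Lemma \ref{Kummer4fold-2} to guarantee that the explicit group $G'=\langle T_0',\dots,T_4'\rangle < SL(4,\bZ[i])$ meets both hypotheses of Proposition \ref{Kummer4fold-1}: namely, that $G'$ is $SL(4,\bC)$-conjugate to $G$ (via the matrix $W$ with $T_i'=W^{-1}T_iW$), and that the induced action of $G'/\langle -\id\rangle$ on the $\bZ_2[i]$-module $M^4$ is trivial on $M^4_0$ while each element of $M^4\setminus M^4_0$ has isotropy generated by some $[\pm T_i']$.

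Having verified the hypotheses, I would then apply Proposition \ref{Kummer4fold-1} to obtain the desired symplectic resolution $X\to \bE^4/G'$ with $b_2(X)=b_6(X)=23$ and $b_4(X)=276$. Since $X$ is by construction a symplectic resolution of a finite quotient of the abelian 4-fold $\bE^4$, it qualifies as a Kummer-type symplectic 4-fold in the sense of \cite{AW-Kummer} and \cite{DontenKummer} recalled at the start of this section, which yields the statement.

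There is essentially no obstacle at the level of the corollary itself, since the substantive work has already been carried out upstream. The main effort sits in Proposition \ref{Kummer4fold-1} (the local-to-global gluing of the symplectic resolutions of $\bC^4/G$ and $\bC^4/\langle \pm T_i\rangle$ along the 2-dimensional families of $A_1$ singularities away from the 2-torsion locus, the count of $20$ exceptional divisors, and the appeal to Guan \cite{Guan} to pin down the Betti numbers) and in Lemma \ref{Kummer4fold-2} (the explicit exhibition of $W$ and the module-theoretic verification that the rank-$2$ submodules $K_i\subset M^4$ satisfy $K_i\cap K_j=\{0\}$ for $i\ne j$ and together exhaust $M^4\setminus M^4_0$ exactly once). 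The corollary itself is then a one-line specialization to this specific $G'$.
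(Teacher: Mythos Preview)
Your proposal is correct and matches the paper's approach exactly: the corollary is stated without proof in the paper precisely because it follows immediately by combining Lemma~\ref{Kummer4fold-2} with Proposition~\ref{Kummer4fold-1}. Your summary of where the real work lies (in those two upstream results) is accurate.
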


%-------------------------------------------------------------------%

\bibliography{biblio} \bibliographystyle{alpha}

\end{document}